\definecolor{labelkey}{rgb}{0,0,1}
\definecolor{Red}{rgb}{0.7,0,0.1}
\definecolor{Green}{rgb}{0,0.7,0}
\numberwithin{equation}{section}
\newtheorem{Thm}{Theorem}[section]
\newtheorem{Prop}[Thm]{Proposition}
\newtheorem{Cor}[Thm]{Corollary}
\newtheorem{Rmk}[Thm]{Remark}
\newtheorem*{Thm*}{Theorem}
\definecolor{dkgreen}{rgb}{0,0.6,0}
\definecolor{gray}{rgb}{0.5,0.5,0.5}
\newcommand{\R}{\mathbb{R}}
\newcommand{\Obs}{\mathcal{O}}
\newcommand{\til}[1]{{\widetilde{#1}}}
\newcommand{\xt}{\til{x}}
\newcommand{\yt}{\til{y}}
\newcommand{\st}{\til{\s}}
\newcommand{\bt}{\til{\be}}
\newcommand{\rt}{\til{\rho}}
\newcommand{\be}{\beta}
\newcommand{\de}{\delta}
\newcommand{\De}{\Delta}
\newcommand{\gam}{\gamma}
\newcommand{\eps}{\epsilon}
\newcommand{\veps}{\varepsilon}
\newcommand{\s}{\sigma}
\newcommand\tx{\til{x}}
\newcommand\ty{\til{y}}
\newcommand\tz{\til{z}}
\font\tenipa=tipa10
\def\schwa{{\tenipa\char64}}
\begin{document}

\title[Learning parameters of a chaotic system via partial observations]{Dynamically learning the parameters of a chaotic system using partial observations}

\author{E. Carlson, J. Hudson, A. Larios, V. R. Martinez, E. Ng, J. P. Whitehead}

\thanks{The authors would like to thank Daniel Grange for his helpful comments and suggestions.  The research of E.C. was supported in part by the NSF Graduate Research Fellowship Program under Grant No.1610400. Any opinions, findings, and conclusions or recommendations expressed in this material are those of the author(s) and do not necessarily reflect the views of the National Science Foundation.  Research conducted for this talk is supported in part by the Pacific Institute for the Mathematical Sciences (PIMS). The research and findings may not reflect those of the Institute.  E.C. acknowledges and respects the l\schwa $\text{k}^{\text{w}}$\schwa $\eta$\schwa n peoples on whose traditional territory the University of Victoria stands, and the Songhees, Esquimalt and \underline{W}S\'ANE\'C peoples whose historical relationships with the land continue to this day.  Author A.L. was partially supported by NSF grant CMMI-1953346. V.R.M. was partially supported by the PSC-CUNY Research Award Program under grant PSC-CUNY 64335-00 52.}

\date{August 18, 2021}

\begin{abstract}
Motivated by recent progress in data assimilation, we develop an algorithm to dynamically learn the parameters of a chaotic system from partial observations.  Under reasonable assumptions, we rigorously establish the convergence of this algorithm to the correct parameters when the system in question is the classic three-dimensional Lorenz system.  Computationally, we demonstrate the efficacy of this algorithm on the Lorenz system by recovering any proper subset of the three non-dimensional parameters of the system, so long as a corresponding subset of the state is observable.  We also provide computational evidence that this algorithm works well beyond the hypotheses required in the rigorous analysis, including in the presence of noisy observations, stochastic forcing, and the case where the observations are discrete and sparse in time.
\end{abstract}

\maketitle


{\noindent \small {\it {\bf Keywords: parameter estimation, data assimilation, Azouani-Olson-Titi (AOT) algorithm, nudging, Lorenz equations, inverse problems}
  } \\
  {\it {\bf MSC 2010 Classifications:} 
  34D06, 
  34A55, 
  34H10, 
  37C50, 
  35B30, 
  60H10 
  } 
  }

\tableofcontents

\newpage
\section{Introduction}
\noindent
A fundamental concern when using any mathematical model is the need to precisely specify parameters that describe the physical situation in question. Although the fundamental physics that underlie these models are typically not a matter of debate, it is often difficult to precisely identify the various parameters that describe a particular setting of interest.  A topic of immense importance is then to accurately determine the parameters of a model given a limited set of observations of the system.  At the far extreme, one may try to identify fitting parameters that match the available data via a neural network or similar data-fitting algorithm. Although this works remarkably well in many settings, it is often not informative of the underlying physics of the problem (see \cite{ayed2019learning,foster2020learning,kutz2017deep,ma2018model,qian2020lift,trehan2017error} for just a few examples of this and similar approaches). This article is not a discussion on the phenomenological benefits of using data versus modeling via first principles, but we note that most modeling approaches fall somewhere along the spectrum between purely data-driven techniques such as standard neural networks, and models built on differential equations or other mathematical constructs that assume a deterministic evolution of the system. Just as the construction of such models falls on this spectrum, so does the identifying of parameters that specifically fit such a model. In this light, we note that the approach described below lies closer to the classical approach of modeling via first principles, but incorporates limited observational data as well in order to learn the parameters of the relevant dynamical model.

Purely data-driven approaches to parameter learning and estimation have their relevance and are useful in many contexts, but also have some drawbacks.  First, most machine learning approaches require a substantial amount of data to train on, which may not always be available for a given dynamical system of interest.  Second, these approaches are typically applied after the data is collected.  In other words, the data is observed and then a surrogate model (such as a neural network) is trained to model the data; the surrogate is then used for future predictions.  We will instead present an approach that relies on existing knowledge of the equations upon which the dynamical system evolves, but where various components of the system may not be known.  We present an algorithm that will learn the parts of the governing equations that are either unknown or uncertain, \textit{concurrently} as data is collected.  Colloquially, we call this `on-the-fly' learning of parameters.  Although we only present algorithms for certain types of observable data, the methods are very promising in their applicability to a variety of physically interesting systems, including the one-dimensional (1D) Kuramoto-Sivashinsky equation \cite{pachev2021concurrent}, two-dimensional (2D) incompressible Navier-Stokes equations \cite{Carlson_Hudson_Larios_2018}, and related systems.

The current investigation  is to study the accurate estimation of parameters based on restricted observational data. 
The most straight-forward such method would be to run a series of model simulations using all potential parameter combinations, as is done in, e.g., \cite{DiLeoni_Clark_Mazzino_Biferale_2018_inferring}, and then use linear regression to fit the estimated parameters to, e.g., the previously collected observational output. However, such brute force approaches can often be cost prohibitive.  
Other techniques have been proposed and tested in various settings (see \cite{baumeister1997line,ding2019gradient,nguyen2016state,raue2015data2dynamics,ruthotto2017jinv,xu2015application,xun2013parameter} for a small number of examples).  These approaches range from Bayesian-based methods such as Markov Chain Monte Carlo (see, e.g., \cite{dashti2017bayesian}) --- which require a significant number of forward simulations of the model system in order to collect comparative data relative to the observations --- to various modifications of the Kalman filter suited for parameter learning/estimation (see, e.g., \cite{evensen2009ensemble,van2001square}), and even to particle filters (see, e.g.,  \cite{zhu2019state}).  The Kalman filter approach is most similar to our current investigation in practice because it is capable of reproducing the state of the system and the relevant parameters simultaneously with a limited number of observations and evaluations of the forward model.  Nevertheless, the algorithm presented below is quite different from the Kalman filter.

Variations on the parameter learning algorithm presented here are also demonstrated in a different context in \cite{Carlson_Hudson_Larios_2018} and \cite{pachev2021concurrent}.  All of these results, including the current investigation, are motivated by the continuous data assimilation approach pioneered by \cite{azouani2014continuous}. The approach was originally presented for the 2D Navier-Stokes equations but has since been generalized to settings beyond the Navier-Stokes system where only a subset of the prognostic variables are observable 
\cite{albanez2016continuous,
Altaf_Titi_Knio_Zhao_Mc_Cabe_Hoteit_2015,
biswas2018continuous,
Biswas_Bradshaw_Jolly_2020,
Biswas_Foias_Mondaini_Titi_2018downscaling,
Biswas_Martinez_2017,
Biswas_Price_2020_AOT3D,
Carlson_Larios_2021_sens,
Celik_Olson_Titi_2019,
Chen_Li_Lunasin_2021,
Desamsetti_Dasari_Langodan_Knio_Hoteit_Titi_2019_WRF,
di2020synchronization,
Diegel_Rebholz_2021,
farhat2020data,
farhat2016charney,
Farhat_Jolly_Titi_2015,
Farhat_Lunasin_Titi_2016abridged,
Farhat_Lunasin_Titi_2016benard,
Farhat_Lunasin_Titi_2017_Horizontal,
Franz_Larios_Victor_2021,
GarciaArchilla_Novo_2020,
GarciaArchilla_Novo_Titi_2018,
Gardner_Larios_Rebholz_Vargun_Zerfas_2020_VVDA,
Gesho_Olson_Titi_2015,
Jolly_Martinez_Olson_Titi_2018_blurred_SQG,
Jolly_Martinez_Titi_2017,
Larios_Pei_2017_KSE_DA_NL,
Larios_Rebholz_Zerfas_2018,
Larios_Victor_2019,
Lunasin_Titi_2015,
Markowich_Titi_Trabelsi_2016_Darcy,
Mondaini_Titi_2018_SIAM_NA,
Pei_2019,
Rebholz_Zerfas_2018_alg_nudge,
Zerfas_Rebholz_Schneier_Iliescu_2019}.
Further extensions of this approach to discrete-in-time observations \cite{Foias_Mondaini_Titi_2016,ibdah2020fully,Larios_Victor_2021_chiVsdelta2D}, and the presence of stochastic noise in the observations \cite{bessaih2015continuous,blocher2018data} have been made with completely rigorous justification.  The basic premise to this form of data assimilation is to include a feedback control term that ``nudges'' the modeled system toward the observed true state (see \cref{sect:multi}).  Dissipation is then crucially used to prove that the system converges not only to the observed projection of the true state, but in fact to the full true state.  It was noted in \cite{Carlson_Hudson_Larios_2018,farhat2020data,Larios_Pei_2018_NSV_DA} for different systems that if the parameters of the model are not that of the true state, then the system will converge to a finite amount of unrecoverable error.  This remaining error is a direct consequence of the parameter error, and hence provides an avenue to estimating the true value of the parameter.

As illustrated in \cite{Carlson_Hudson_Larios_2018,pachev2021concurrent} and below, these parameter learning methods are robust across a variety of settings.  Computational evidence is given in \cite{Carlson_Hudson_Larios_2018} that the viscosity of the 2D Navier-Stokes system can be recovered, and \cite{pachev2021concurrent} accurately identifies several different parameters simultaneously for the 1D Kuramoto-Sivashinsky equation.  However, neither of the works \cite{Carlson_Hudson_Larios_2018,pachev2021concurrent} were able to provide a completely rigorous justification for the success of these methods, despite some clear algorithmic hints that such a proof is available.  In order to focus on a setting where rigorous results are more readily achievable, we study the classical set of ordinary differential equations given by the Lorenz `63 system from \cite{lorenz1963deterministic}.  Indeed, in the context of the Lorenz equations, it has been rigorously proven that the feedback control approach results in synchronization of the modeled system with the true state. This has been accomplished in various setups, such as time-averaged observations, enforcing a nonlinear feedback control, or in the presence of observational error \cite{blocher2018data, Du_Shiue2021,LawShuklaStuart_2014}.  While the Lorenz system is inherently finite-dimensional and hence of a much simpler nature than even the Kuramoto-Sivashinsky equation, it is highly nonlinear and has often been used as the basis for investigations of nonlinear and chaotic phenomena such as turbulence (see, e.g., \cite{agarwal2016maximal,doering1995shape,foias2001lorenz,hayden2011discrete,souza2015maximal}).  Just as in \cite{foias2001lorenz,souza2015maximal}, we anticipate that the analysis outlined below can later be extended to infinite dimensional dissipative systems such as the Navier-Stokes or Kuramoto-Sivashinsky equations

The remainder of this article proceeds as follows.  Section \ref{sect:multi} provides a heuristic derivation of the update formula, then reviews some key prior results for the Lorenz system, and finally states the main mathematically rigorous results as theorems, the proofs of which are relegated to \cref{sect:proofs}. Afterwards, an alternative approach for parameter recovery via `direct-replacement' is provided for the sake of an analytical comparison. Section \ref{sec:comps} describes the computational experiments that were performed on the Lorenz system. These results demonstrate our parameter learning algorithms under various circumstances that are substantially more robust than the convergence regimes identified in the theorems stated in Section \ref{sect:multi} (see, for instance, \eqref{cond:mu:large} and \eqref{cond:non:degen}). The authors have made the code for this work freely available on Github (see \cref{sec:numerical_methods}), but for quick reference, a compressed MATLAB code is given in Appendix \ref{sect:matlab}. Lastly, Section \ref{sec:conclusions} includes some conclusions and discussion of future work.

\section{Multi-parameter recovery}\label{sect:multi}

In this section, we derive formulas that recover any subset of the parameters of the Lorenz `63 system provided that a certain subset of its state variables are known (\cref{sect:derive}). These formulas are ultimately inspired by the recent work \cite{Carlson_Hudson_Larios_2018}, which leverages a data assimilation algorithm for PDEs, developed by Azouani, Olson, and Titi (AOT) in \cite{azouani2014continuous}, to recover the unknown viscosity from partial observations of the velocity field. The AOT algorithm modifies the original PDE using a feedback control term that incorporates observations of the original, ``true'' system. More specifically, it appropriately interpolates the observations to the phase space of the original system in such a way that the state variables of the resulting model system are driven towards the observed variables of the original system.  Dissipative effects in the systems then drive the simulated solution to the true solution. For many of the systems studied in hydrodynamics, the solution of the AOT system asymptotically synchronizes with the solution of the original system when all of the system parameters are perfectly known 
\cite{
Altaf_Titi_Knio_Zhao_Mc_Cabe_Hoteit_2015,
biswas2018continuous,
Biswas_Martinez_2017,
Biswas_Price_2020_AOT3D,
Desamsetti_Dasari_Langodan_Knio_Hoteit_Titi_2019_WRF,
di2020synchronization,
farhat2020data,
farhat2016charney,
Farhat_Jolly_Titi_2015,
Farhat_Lunasin_Titi_2016abridged,
Farhat_Lunasin_Titi_2016benard,
Farhat_Lunasin_Titi_2017_Horizontal,
Jolly_Martinez_Olson_Titi_2018_blurred_SQG,
Jolly_Martinez_Titi_2017,
Markowich_Titi_Trabelsi_2016_Darcy,
Pei_2019}.  
In the case where the parameters are \textit{not} exactly known, however, we extract a formula from the AOT system that dynamically updates the unknown parameters {\sout in a systematic manner}. We further identify rigorous conditions under which these updates eventually converge to the true value of the parameter.

In the context of finite-dimensional systems, the AOT algorithm coincides with a classical data assimilation algorithm known as \textit{nudging} \cite{hoke1976initialization}. The difference between the AOT and nudging algorithms at the level of PDEs has recently been studied in detail for a large-scale comparison test case  \cite{Carlson_VanRoekel_Petersen_Godinez_Larios_2021}. For the Lorenz system, the AOT algorithm can be reduced to the classical nudging-based algorithm by selecting the observation-interpolating operator to be a diagonal matrix with non-negative coefficients---the coefficients representing the strength of nudging in each component of the observations. These coefficients ultimately determine the rate at which the model variables relax to the true variables. We will prove that the update rules derived from the AOT algorithm eventually recover the parameters, assuming certain algorithmic stipulations are met. We derive the formulas in \cref{sect:derive} and supply the proofs
in \cref{sect:proofs}.  A precise statement of the main results are presented in \cref{sect:statements}).
We conclude the section by briefly comparing the nudging-based approach with an alternative direct-replacement approach in \cref{sect:direct:replace}.

\subsection{Derivation of parameter recovery formulas}\label{sect:derive}
The system of interest is the Lorenz `63 system \cite{lorenz1963deterministic}, which is given by

\begin{empheq}[left=\empheqlbrace]{align}\label{eq:Lor}\begin{split}
       \dot{x} &= \sigma(y-x) \\
       \dot{y} &= \rho x - y - xz \\
       \dot{z} &= xy - \beta z
\end{split}\end{empheq}
where $\s,\rho,\be>0$. Consider the problem where a subset of the system parameters $\{\s,\rho,\be\}$ is unknown and we want to learn the true values of those unknown parameters by leveraging the \textit{known} dynamics, i.e., \eqref{eq:Lor}, along with partial observations of the state variables $x,y,z$ and estimates of the true parameter values.

The nudged system corresponding to \eqref{eq:Lor} is given by
\begin{empheq}[left=\empheqlbrace]{align}\label{eq:Lor:ng}\begin{split}
       \dot{\tx} &= \st(\ty-\tx) - \mu_1(\tx - x) \\
       \dot{\ty} &= \rt\tx - \ty - \tx\tz - \mu_2(\ty-y) \\
       \dot{\tz} &= \tx\ty - \bt\tz - \mu_3(\tz-z).
\end{split}\end{empheq}
where $\st,\rt,\bt>0$ and $\mu_1,\mu_2,\mu_3\geq0$. The nudged system \eqref{eq:Lor:ng} possesses the following synchronization property when the parameters are known exactly, i.e., $\s=\st$, $\rho=\rt$, and $\be=\bt$: for $\mu_1,\mu_2,\mu_3$ sufficiently large, one has $\tx-x$, $\ty-y$, $\tz-z\rightarrow0$ exponentially fast as $t\rightarrow\infty$. The reader is referred to, e.g., \cite{blocher2018data}, for additional details.

The parameter recovery problem we consider for \eqref{eq:Lor} is stated as follows: suppose that a subset of the system parameters, $\s,\rho,\be$, is unknown; the goal is to infer the unknown parameters, assuming that a subset of the state variables is observed. For example, if $\s$ is not known precisely but the continuous time series $\Obs=\{x(t)\}_{t\geq0}$ is observable, then the goal is to recover $\s$ using the values from $\mathcal{O}$ and the nudged system. This will be done by using observations to improve the accuracy of the nudged system and generating approximations concurrently. Based on the synchronization that occurs for \eqref{eq:Lor:ng}, we define an update formula that proposes increasingly accurate estimates of the unknown parameter(s) of interest.

Denote the differences between the nudged variables and original variables by
    \begin{align}\label{def:diff}
        u:=\tx-x,\quad v:=\ty-y,\quad w:=\tz-z,
    \end{align}
and the parameter errors by
\begin{align}\label{def:par:err}
    \Delta\sigma := \st-\sigma,\quad \Delta\rho := \rt-\rho,\quad \Delta\beta := \bt-\beta.
\end{align}
Then the system governing the evolution of the error vector $(u,v,w)$ is given by
\begin{align}\label{eq:diff_pre}
    \begin{cases}
        \dot{u} &=  (\ty-\tx)\Delta\sigma+\sigma v-(\mu_1+\sigma) u,\\
        \dot{v} &= \De\rho\tx +\rho u -uz-\tx w-(1+\mu_2)v, \\
        \dot{w} &= -\De\be\tz  +uy+\tx v-(\mu_3+\beta)w.
    \end{cases}
\end{align}
Multiplying each equation in \eqref{eq:diff_pre} by $u, v,$ and $w$ respectively, we obtain
\begin{empheq}[left=\empheqlbrace]{align*}
\begin{split}
    \frac{1}{2}\frac{d}{dt}u^2 &=  (\Delta\sigma)(\ty-\tx) u + \sigma(v-u)u  - \mu_1 u^2, \\
    \frac{1}{2}\frac{d}{dt}v^2 &= \De\rho \tx v + \rho uv - v^2 - (uz + \tx w)v - \mu_2v^2, \\
    \frac{1}{2}\frac{d}{dt}w^2 & = (uy + \tx v)w - \beta w^2 - \De\beta \tz w - \mu_3w^2.
\end{split}
\end{empheq}

Now suppose that $\mu_1,\mu_2,\mu_3\gg1$.  From the synchronization property, we anticipate that the parameter errors, $\Delta\s, \Delta\rho, \Delta\be$, are small and hence $u,v$ and $w$ should also be small. In particular, quadratic terms should be negligible \textit{unless} they are multiplied by $\mu_1,\mu_2,\mu_3$. Provided that the parameter errors are not identically zero, the differences are expected to relax towards a value proportional to the size of these errors. This means that the associated time-derivatives of the differences will also become negligible after a transient time interval. To leading order, we deduce that
\begin{empheq}[left=\empheqlbrace]{align*}
\begin{split}
    0 &\approx \De\sigma(\ty-\tx) u - \mu_1 u^2, \\
    0 &\approx \De\rho \tx v - \mu_2v^2, \\
    0 &\approx -\De\beta\tz w - \mu_3w^2,
\end{split}
\end{empheq}
which formally reduces to
\begin{empheq}[left=\empheqlbrace]{align*}
\begin{split}
    \sigma &\approx \st - \mu_1 \frac{u}{\ty-\tx}, \\
    \rho &\approx \rt - \mu_2\frac{v}{\tx}, \\
    \beta &\approx \bt + \mu_3\frac{w}{\tz}.
\end{split}
\end{empheq}
In particular, we propose the following rules for updating the parameters
\begin{empheq}[left=\empheqlbrace]{align}
\label{def:update}
    \begin{split}
    \sigma_{n+1} &= \sigma_n - \mu_1 \frac{u}{\ty-\tx}, \\
    \rho_{n+1} &= \rho_n - \mu_2\frac{v}{\tx}, \\
    \beta_{n+1} &= \beta_n + \mu_3\frac{w}{\tz},
    \end{split}
\end{empheq}
where it is understood that $\tx, \ty, \tz$ and $u, v, w$ are evaluated at the ``final time" of the previous ``epoch". This time delineates the period between the $(n-1)$-{st} and $n$-th updates. These parameter updates are only performed at instances when the differences $u,v,w$ have relaxed to a (non-zero) near constant value. The process is restarted afterwards by solving the nudging equations forward-in-time with the new parameter values. As this process is iterated, we find that the parameters converge to the true values.

\subsection{Statements of main theorems}\label{sect:statements}
In this section, we identify sufficient conditions under which the formulas \eqref{def:update} converge to the true values of the corresponding parameters. A rigorous proof of this statement is supplied in \cref{sect:proofs}, but we provide precise statements of these theorems now. In order to do so, we will make reference to the following well-known properties of the Lorenz system, which can be found in, e.g., \cite{Doering_Gibbon_1995_book,RobinsonBook}.

\begin{Thm}\label{thm:Lor:ab}
Let $\s,\rho,\be>0$. Then there exists an absorbing ball for \eqref{eq:Lor}, that is there exists $R>0$ such that for any $r>0$
    \begin{align}\label{eq:Lor:ab}
        |(x(t),y(t),z(t)-\rho-\s)|\leq R,\quad \text{for all}\ t\geq T,
    \end{align}
whenever $|(x(0),y(0),z(0)-\rho-\s)|\leq r$, for some $T=T(r)$. In particular, \eqref{eq:Lor} has a global attractor, $\mathscr{A}=\mathscr{A}(\s,\rho,\be)$.
\end{Thm}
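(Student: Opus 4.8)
The plan is to exhibit an explicit energy-type (Lyapunov) functional whose dissipation forces every trajectory into a fixed ball, and then to invoke the standard existence theorem for global attractors of a dissipative semiflow on $\R^3$.

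First I would pass to the shifted variable $\z := z - \rho - \s$, which is exactly the quantity appearing in \req{eq:Lor:ab}, and consider
\begin{align*}
W(t) := \tfrac12\bigl(x(t)^2 + y(t)^2 + \z(t)^2\bigr).
\end{align*}
Differentiating $W$ along solutions of \req{eq:Lor} and substituting $z = \z + \rho + \s$, the decisive observation is that the cubic terms $\mp xyz$ and all the $xy$ cross-terms cancel, leaving the clean identity
\begin{align*}
\frac{dW}{dt} = -\s x^2 - y^2 - \be\z^2 - \be(\rho+\s)\z .
\end{align*}

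Next I would absorb the indefinite linear term with Young's inequality, $-\be(\rho+\s)\z \le \tfrac{\be}{2}\z^2 + \tfrac{\be}{2}(\rho+\s)^2$, which yields the scalar differential inequality
\begin{align*}
\frac{dW}{dt} \le -2\de\,W + C, \qquad \de := \min\{\s,\,1,\,\tfrac{\be}{2}\} > 0, \quad C := \tfrac{\be}{2}(\rho+\s)^2 .
\end{align*}
A comparison (integrating-factor) argument with $e^{2\de t}$ then gives $W(t) \le W(0)e^{-2\de t} + \tfrac{C}{2\de}$ for all $t \ge 0$, hence $\limsup_{t\to\infty} W(t) \le C/(2\de)$. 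Fixing any $R$ with $R^2 > C/\de$ and noting $W(0) \le r^2/2$ whenever $|(x(0),y(0),z(0)-\rho-\s)| \le r$, one solves $\tfrac{r^2}{2}e^{-2\de t} + \tfrac{C}{2\de} \le \tfrac{R^2}{2}$ for a threshold time $T = T(r)$, after which \req{eq:Lor:ab} holds; since moreover the right-hand side never exceeds $\max\{W(0),C/(2\de)\}$, the ball $\{\,|(x,y,z-\rho-\s)| \le R\,\}$ is in fact forward invariant as well as absorbing. Finally, the solution map of \req{eq:Lor} is a continuous (indeed smooth) semigroup on $\R^3$ admitting this bounded absorbing set, and because bounded subsets of $\R^3$ are precompact the asymptotic-compactness hypothesis is automatic; the classical theorem on existence of global attractors (see \cite{Doering_Gibbon_1995_book,RobinsonBook}) then furnishes $\mathscr{A} = \mathscr{A}(\s,\rho,\be)$ as the $\omega$-limit set of the absorbing ball.

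I do not anticipate a genuine obstacle: the statement is classical and is already attributed to \cite{Doering_Gibbon_1995_book,RobinsonBook} in the text. The only delicate point is the algebraic bookkeeping in the second step --- choosing the shift by $\rho+\s$ together with the specific quadratic form $x^2+y^2+\z^2$ so that the non-dissipative cubic term of the Lorenz nonlinearity drops out of $dW/dt$; once that cancellation is secured, the remainder is a one-line scalar estimate followed by an appeal to abstract attractor theory.
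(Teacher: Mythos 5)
Your proof is correct: the shift $\z=z-\rho-\s$ does cancel the cubic terms exactly, the resulting inequality $\frac{dW}{dt}\le-2\de W+C$ with $\de=\min\{\s,1,\be/2\}$ follows from Young's inequality, and the attractor then comes from the standard theory for dissipative semiflows on $\R^3$. The paper itself supplies no proof for this statement, citing \cite{Doering_Gibbon_1995_book,RobinsonBook} instead, and your argument is essentially the classical one found there (the shift by $\rho+\s$ in \eqref{eq:Lor:ab} is precisely the hallmark of this Lyapunov functional), so there is nothing further to compare.
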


In the following, we denote the ball of radius $r>0$ centered at $\mathbf{x}\in\R^3$ by $\mathscr{B}(r,\mathbf{x})$. Then the absorbing ball of \eqref{eq:Lor} may be denoted by $\mathscr{B}(R,0,0,-\rho-\sigma)$, where $R>0$ is given as in \cref{thm:Lor:ab}. For convenience, we will often simply write $\mathscr{B}_{\s,\rho,\be}$ to denote the absorbing ball of \eqref{eq:Lor} corresponding to parameters $\s,\rho,\be$,  or simply $\mathscr{B}$ whenever $\s,\rho,\be$ is understood to have been fixed.

Given $(x_0,y_0,z_0-\rho-\s)\in\mathscr{B}$, we denote the corresponding global solution of \eqref{eq:Lor} evaluated at time $t\geq0$ by $(x(t;x_0),y(t;y_0),z(t;z_0))$. For convenience, we suppress the dependence on the initial data and simply write $(x(t),y(t),z(t))$. Given $n>0$ and $t_n>0$, we use the notation
    \begin{align}\label{def:discrete}
        x_n=x(t_n),\quad y_n=y(t_n),\quad z_n=z(t_n).
    \end{align}
For a sequence of discrete times $\{t_n\}_{n\geq0}$,  let $I_n=[t_n,t_{n+1})$ where $t_0=0$. Then given positive sequences of the parameters $\{\s_n\},\{\rho_n\},\{\be_n\}$ updated as described above, we consider the unique solution $(\tx^{(n)},\ty^{(n)},\tz^{(n)})$ of \eqref{eq:Lor:ng} over the interval $I_n$ corresponding to initial data $(\tx(t_n),\ty(t_n),\tz(t_n))$ and parameters $\st=\s_n$, $\rt=\rho_n$, $\bt=\be_n$, i.e.,
    \begin{align}\label{eq:Lor:ng:n}
        \begin{cases}
    \dot{\tx}^{(n)} = \s_n(\ty^{(n)}-\tx^{(n)}) - \mu_1(\tx^{(n)} - x),&  \tx^{(n)}(t_{n}^-)=\tx_n, \\
       \dot{\ty}^{(n)} = \rho_n\tx^{(n)} - \ty^{(n)} - \tx^{(n)}\tz^{(n)} - \mu_2(\ty^{(n)}-y),& \ty^{(n)}(t_n^-)=\ty_n, \\
       \dot{\tz}^{(n)} = \tx^{(n)}\ty^{(n)} - \be_n\tz^{(n)} - \mu_3(\tz^{(n)}-z),& \tz^{(n)}(t_n^-)=\tz_n,
      \end{cases}
    \end{align}
for $t\in I_n$ and $n\geq0$, where $\mu_1,\mu_2,\mu_3\geq0$ are fixed. Note that when $n=0$, we simply let $t_n^-\equiv0$. We also have the corresponding systems \eqref{eq:diff} (evolution of the differences $u^{(n)},~v^{(n)}$ and $w^{(n)}$) and \eqref{eq:diff:dot} (evolution of $\dot{u}^{(n)},~\dot{v}^{(n)},$ and $\dot{w}^{(n)}$, denoted $\gam^{(n)}$, $\de^{(n)}$, and $\eta^{(n)}$, respectively) defined over $I_n$. Since we will be making parameter updates sequentially in time, we emphasize that the ``final values" of \eqref{eq:Lor:ng:n} over $I_{n-1}$ specify the initial value over the ``current interval" $I_n$,  while the initial value problem over $I_n$ for the \textit{derivative} system of \eqref{eq:Lor:ng:n} (see \eqref{eq:diff:dot}) is accordingly defined by using right-hand limits at the endpoint $t=t_n$.  This detail is necessary to allow for jumps in the solution at each instance of an update. In particular
    \begin{gather}\label{def:discrete:diff}
        \begin{split}
        \tx_{n}=\tx^{(n)}(t_{n}^-),\quad \ty_{n}=\ty^{(n)}(t_{n}^-),\quad \tz_{n}=\ty^{(n)}(t_{n}^-),\\
        u_{n}=u^{(n)}(t_{n}^-),\quad v_{n}=v^{(n)}(t_{n}^-),\quad w_{n}=w^{(n)}(t_{n}^-),\\
        \gam_{n}=\gam^{(n)}(t_{n}^+),\quad \de_{n}=\de^{(n)}(t_{n}^+),\quad \eta_{n}=\eta^{(n)}(t_{n}^+).
        \end{split}
    \end{gather}

\noindent
Finally, for $n\geq1$, we will suppose that $t_n>0$ represents the time at which the $n$-th update for the triplet $(\s,\rho,\be)$ is made. We then claim the following.
\begin{Thm}\label{thm:convergence:x}
Let $\rt=\rho$, $\bt = \beta$, and $\mu_2=\mu_3 = 0$.  Let $M \geq 1$ and $\s_0>0$ such that $|\sigma_0-\sigma| \leq M$, and choose a tolerance $\veps > 0$.  There exist constants $C,C'>0$ such that if $\mu_1$ satisfies
    \begin{align}\label{cond:mu:large}
        \mu_1\geq C\left(1+\s+\rho+\be+R\right)^2,
    \end{align}
and
    \begin{align}\label{cond:mu:converge}
        \mu_1\geq\frac{C'}{\veps^2},
    \end{align}
then if there exists a sequence of $N$ update times $0<t_1<t_2<\dots<t_N$ where 
    \begin{align}\label{cond:non:degen}
      \inf_{0\leq n\leq N+1}|\ty_{n}-\tx_{n}|\geq \varepsilon,
    \end{align}
then
    \begin{align}\label{eq:converge}
        \sup_{0\leq n\leq N}\frac{|\sigma_{n+1}-\sigma|}{|\sigma_n-\sigma|}\leq\frac{1}2.
    \end{align}
Moreover, $\s_n>0$, for all $n=1,\dots, N+1$.
\end{Thm}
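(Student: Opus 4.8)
The plan is to track a single ``epoch'' carefully and extract from the differential equations a quantitative version of the heuristic in \cref{sect:derive}. On the interval $I_n$ the error $u^{(n)}$ satisfies a scalar ODE, $\dot{u}^{(n)} = (\ty^{(n)}-\tx^{(n)})\De\s_n - (\mu_1+\s)u^{(n)}$ (using $\mu_2=\mu_3=0$ and $\rt=\rho$, $\bt=\be$, so the $v,w$ terms do not feed back through the $u$-equation except via $\s v$, which I must keep and control). The first step is to establish uniform a priori bounds: since $(x,y,z-\rho-\s)$ lies in the absorbing ball $\mathscr{B}$ of radius $R$, and the nudged system is dissipative for $\mu_1$ large (condition \eqref{cond:mu:large}), the nudged trajectory $(\tx^{(n)},\ty^{(n)},\tz^{(n)})$ and hence $u^{(n)},v^{(n)},w^{(n)}$ remain bounded by a constant depending only on $R,\s,\rho,\be$; this is where \eqref{cond:mu:large} is used, and it should follow from a Gr\"onwall/energy estimate on \eqref{eq:diff} of the kind sketched after \eqref{eq:diff_pre}, taking care that the absence of nudging in the $v,w$ components is compensated by the linear damping $-y$, $-\be z$ already present. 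I would also need a lower bound showing $\s_n$ stays positive, which will come out of the contraction \eqref{eq:converge} together with $|\s_0-\s|\le M$ and a smallness-from-largeness argument: the first update already brings the error below $\tfrac12 M$, etc., so $\s_n \in (\s - M, \s + M)$ with room to spare once $\mu_1$ is large relative to $M$ — actually positivity needs $\s > M/2^{?}$ is not assumed, so more carefully $|\s_n - \s|\le M$ for all $n$ and then one invokes $\mu_1$ large to push it down; I will need to check that $\s - M$ could be negative, in which case positivity must be argued from the first step bringing $|\s_1-\s|\le \tfrac12|\s_0-\s|$ hence $\s_1 > \s/2 > 0$ and inductively $\s_n > \s(1 - 2^{-n}) > 0$.

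The second and central step is the per-epoch estimate. Writing $u_{n+1} = u^{(n)}(t_{n+1}^-)$, the update \eqref{def:update} gives $\De\s_{n+1} = \De\s_n - \mu_1 u_{n+1}/(\ty_{n+1}-\tx_{n+1})$. I want to show $|\De\s_{n+1}| \le \tfrac12|\De\s_n|$. Solving the scalar ODE for $u^{(n)}$ by the integrating factor $e^{(\mu_1+\s)t}$, one gets $u^{(n)}(t_{n+1}^-)$ as a decaying-memory integral of $(\ty^{(n)}-\tx^{(n)})\De\s_n + \s v^{(n)}$ against the kernel $e^{-(\mu_1+\s)(t_{n+1}-s)}$. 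The dominant contribution, when $t_{n+1}-t_n$ is not too small, is $\approx \frac{\De\s_n}{\mu_1+\s}(\ty_{n+1}-\tx_{n+1})$ plus lower-order terms: transient terms decaying like $e^{-(\mu_1+\s)(t_{n+1}-t_n)}$ carrying the ``old'' value $u_n$, a term from the time-variation of $\ty^{(n)}-\tx^{(n)}$ over the epoch (bounded using the a priori derivative bounds from \eqref{eq:diff:dot}), and the $\s v^{(n)}$ contribution. Hence $\mu_1 u_{n+1}/(\ty_{n+1}-\tx_{n+1}) = \frac{\mu_1}{\mu_1+\s}\De\s_n + (\text{error})$, and $\De\s_{n+1} = \frac{\s}{\mu_1+\s}\De\s_n - (\text{error})$. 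The coefficient $\s/(\mu_1+\s) \le \tfrac12$ for $\mu_1 \ge \s$, which is implied by \eqref{cond:mu:large}; the error terms must be shown to be $\le \tfrac14|\De\s_n|$ in absolute value, and this is exactly where \eqref{cond:mu:converge} and the non-degeneracy \eqref{cond:non:degen} enter — division by $\ty_{n+1}-\tx_{n+1}$ costs a factor $1/\veps$, the worst error term is $O(1/\mu_1)$ times bounded quantities divided by $\veps$, which is $\le C'/(\mu_1\veps^2)\cdot(\text{bounded})$, made small by \eqref{cond:mu:converge}. One subtlety: the ``old value'' transient term involves $u_n$, not $\De\s_n$, so I must either assume the epochs are long enough that $e^{-(\mu_1+\s)(t_{n+1}-t_n)}$ is negligible, or — more in the spirit of the theorem, which imposes no lower bound on $t_{n+1}-t_n$ — absorb it by bounding $|u_n|$ in terms of $|\De\s_{n-1}|$ from the previous epoch and iterating; I suspect the cleanest route is to carry a joint induction on $(|\De\s_n|, |u_n|, |\gam_n|)$ showing all three contract or stay controlled, using \eqref{eq:diff} and \eqref{eq:diff:dot} for the latter two.

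The third step is bookkeeping: chain the per-epoch estimate over $n=0,\dots,N$ to get \eqref{eq:converge}, and verify positivity of $\s_n$ as above. The main obstacle I anticipate is \textbf{handling short epochs}: if $t_{n+1}-t_n$ is tiny, $u^{(n)}$ has not relaxed, the transient term dominates, and the argument via the integrating factor degrades — so I expect the real content is a coupled induction that also controls $u_n$ and $\dot u_n = \gam_n$ at the epoch endpoints (this is presumably why the excerpt bothers to define the derivative system \eqref{eq:diff:dot} and the quantities $\gam_n,\de_n,\eta_n$), turning the single-ODE estimate into a stable $3$-term recursion whose linear part has spectral radius $\le \tfrac12$ once \eqref{cond:mu:large}--\eqref{cond:mu:converge} hold. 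A secondary nuisance is that even though $\rt=\rho$ and $\bt=\be$, the $v,w$ errors are nonzero and couple into the $u$-equation through $\s v$; these must be shown to be $O(1/\mu_1)$ small uniformly, again using the dissipativity from \eqref{cond:mu:large}, so that they contribute only to the harmless error bucket.
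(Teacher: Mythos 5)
Your plan follows the same overall architecture as the paper (a priori energy estimates for the difference system and its derivative, an epoch-wise analysis of the $u$-equation at the update times, division by $\ty_{n+1}-\tx_{n+1}\geq\veps$, and $\mu_1$ large), but there is a genuine gap in the error bookkeeping. The conclusion \eqref{eq:converge} is a \emph{multiplicative} contraction that must hold at every step $n\leq N$, in particular after $|\s_n-\s|$ has already become extremely small; yet you control the residual contributions (the $\s v^{(n)}$ term, the variation of $\ty-\tx$ over the epoch, etc.) only in \emph{absolute} terms --- ``$O(1/\mu_1)$ small uniformly'', ``$\leq C'/(\mu_1\veps^2)\cdot(\text{bounded})$''. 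Absolute smallness yields only $|\s_{n+1}-\s|\leq\tfrac12|\s_n-\s|+(\text{small constant})$, i.e.\ convergence to a neighborhood of $\s$, not \eqref{eq:converge}. What is needed --- and what is the real content of \cref{prop:diff:energy,prop:diff:dot:energy} and \cref{cor:diff:energy,cor:diff:dot:energy} --- is that after a relaxation period the residual levels are \emph{proportional to the current parameter error}: $\mathcal{K},\mathcal{L}\lesssim(\De\s_n)^2/\mu_1$, hence $|u_{n+1}|,|v_{n+1}|,|\dot u_{n+1}|\lesssim|\De\s_n|/\sqrt{\mu_1}$. Your step 1 claims only bounds ``by a constant depending on $R,\s,\rho,\be$'', which is too weak to close the argument. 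Once the proportional bounds are in hand, the paper also shows the Duhamel/leading-term extraction is unnecessary: rearranging the $u$-equation gives $\s_n-\s=[\dot u-\s(v-u)+\mu_1 u]/(\ty-\tx)$, and substituting into the update rule cancels the $\mu_1u_{n+1}$ term exactly, leaving the pointwise identity \eqref{eq:sigma:diff:proof}, $\s_{n+1}-\s=[\dot u_{n+1}-\s(v_{n+1}-u_{n+1})]/(\ty_{n+1}-\tx_{n+1})$, from which the contraction follows directly using \eqref{cond:non:degen} and \eqref{cond:mu:converge}, with no transient or leading-order decomposition to manage.

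Two further points. First, your worry about arbitrarily short epochs is legitimate, but your proposed cure (a joint three-term recursion that contracts regardless of $t_{n+1}-t_n$) cannot work: two back-to-back updates apply essentially the same correction twice, and the per-step ratio bound genuinely fails in that regime. The paper resolves this not by strengthening the recursion but by choosing the update times only after a relaxation period of length $\sim\frac{2}{\mu}\log\left(\mathcal{K}(t_n)\mathcal{L}(t_n)/(KL)^2\right)$ (consistent with the algorithm described in \cref{sect:derive}), so the transient you were trying to absorb never enters. Second, your positivity argument is a non sequitur as written: $|\s_1-\s|\leq\tfrac12|\s_0-\s|\leq M/2$ does not give $\s_1>\s/2$ when $M>\s$. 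The paper handles this by making the actual contraction factor $(\s\wedge1)/(2M)$ rather than $1/2$ (i.e.\ $C'$ depends on $M$ and $\s$), which forces $|\s_n-\s|\leq\s/2$ for all $n\geq1$ and hence $\s_n>0$.
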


An immediate consequence of \cref{thm:convergence:x} is that if \eqref{cond:non:degen} holds for all $N>0$, then \eqref{eq:converge} enforces exponential convergence to the true parameter value. As previously mentioned, the proof of \cref{thm:convergence:x} is provided in \cref{sect:proofs}. We emphasize, however, that similar statements for \textit{any} of the other combinations of parameters are also available provided that the appropriate datum is also supplied. In particular, one may recover $(\s,\be)$ provided that $(x,z)$ is observed or $(\rho,\be)$, provided that $(y,z)$ is observed. The choice to supply the proof of \cref{thm:convergence:x} is one of efficiency. Of course, all three parameters, $(\s,\rho,\be)$ can also be recovered provided that each of the variables $(x,y,z)$ are known. As this latter situation represents a trivial case, however, one need not apply the proposed algorithm. We point out that a proof can nevertheless be supplied for this trivial case in the same spirit as all the other cases.  Since the statements analogous to \cref{thm:convergence:x} for the other combinations of parameters can be found by making straight-forward adjustments of the analysis that implies the single-parameter case represented by \cref{thm:convergence:x}, we simply provide the statement for one of the other combinations here and refer the reader to \cref{rmk:multi:proof} for additional details. Indeed, our analysis has been organized in such a way to accommodate these other combinations.

\begin{Thm}\label{thm:convergence:xy}
Suppose that $\mu_3=0$ and that $\bt=\be$. Let $M \geq 1$ and $\s_0,\rho_0>0$ such that $|\sigma-\sigma_0| + |\rho-\rho_0| \leq M$, and choose a tolerance $\veps > 0$. There exist $C,C'>0$ such that if $\mu_1,\mu_2$ satisfy
    \begin{align}\label{cond:mu:large:xy}
        \mu_1,\mu_2\geq C\left(1+\s+\rho+\be+R\right)^2,
    \end{align}
and
    \begin{align}\label{cond:mu:converge:xy}
        \mu_1,\mu_2\geq\frac{C'}{\veps^2},
    \end{align}
then if there exists a sequence of $N$ update times $0<t_1<t_2<\dots<t_N$ such that
    \begin{align}\label{cond:non:degen:xy}
      \inf_{0\leq n\leq N+1}\left\{|\ty_{n}-\tx_{n}|, |\tx_n|\right\}\geq \varepsilon,
    \end{align}
then
    \begin{align}\label{eq:converge:xy}
        \sup_{0\leq n\leq N}\frac{|\sigma_{n+1}-\sigma|+|\rho_{n+1}-\rho|}{|\sigma_n-\sigma|+|\rho_n-\rho|}\leq\frac{1}2.
    \end{align}
Moreover, $\s_n,\rho_n>0$, for all $n=1,\dots, N+1$.
\end{Thm}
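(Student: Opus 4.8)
The plan is to establish a single-epoch estimate of the form $|\sigma_{n+1}-\sigma| + |\rho_{n+1}-\rho| \leq \tfrac12\big(|\sigma_n-\sigma| + |\rho_n - \rho|\big)$, valid on each interval $I_n$ under the standing hypotheses, and then iterate. As in the single-parameter case, I would first show that the error variables $(u^{(n)},v^{(n)},w^{(n)})$ and their derivatives $(\gamma^{(n)},\delta^{(n)},\eta^{(n)})$ remain bounded in terms of $R$ and the parameters; the large-$\mu$ condition \eqref{cond:mu:large:xy} is what forces dissipativity in the $u$- and $v$-equations of \eqref{eq:diff_pre}, yielding a priori bounds on the tilde-variables (hence keeping them in a fixed absorbing ball) and on $u,v$ that scale like $\mu_1^{-1}$ and $\mu_2^{-1}$ times the parameter errors plus lower-order cross terms. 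The key is that the nonlinear/cross terms in $\dot u$ and $\dot v$ are $O(1)$ in magnitude (controlled by $R$), so dividing by $\mu_1,\mu_2$ makes them small relative to the leading parameter-error contribution.

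Next I would make rigorous the heuristic leading-order balance behind the update formula \eqref{def:update}. The exact identities are
\begin{align*}
\sigma_{n+1} - \sigma &= \Delta\sigma_n - \mu_1\frac{u_{n+1}}{\ty_{n+1}-\tx_{n+1}} = \frac{1}{\mu_1(\ty_{n+1}-\tx_{n+1})}\Big(\mu_1 u_{n+1} - \mu_1 u_{n+1}\Big) + \text{(error terms)},
\end{align*}
so more precisely one writes $\dot u^{(n)} = (\ty^{(n)}-\tx^{(n)})\Delta\sigma_n + \sigma v^{(n)} - (\mu_1+\sigma)u^{(n)}$ and, at the update time, uses that $\dot u^{(n)}$ and $u^{(n)}$ have relaxed so that $\mu_1 u_{n+1} \approx (\ty_{n+1}-\tx_{n+1})\Delta\sigma_n + \sigma v_{n+1} - \sigma u_{n+1} - \dot u_{n+1}$; dividing by $\mu_1(\ty_{n+1}-\tx_{n+1})$ and subtracting shows $|\sigma_{n+1}-\sigma|$ is bounded by $\mu_1^{-1}\varepsilon^{-1}$ times a combination of $|v_{n+1}|$, $|u_{n+1}|$, $|\dot u_{n+1}|$ — all of which are themselves $O(\mu^{-1})$ multiples of $(|\Delta\sigma_n|+|\Delta\rho_n|)$ by the a priori bounds, using the non-degeneracy $|\ty_n-\tx_n|\geq\varepsilon$ from \eqref{cond:non:degen:xy}. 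The analogous computation for $\rho$ uses the $\dot v^{(n)}$ equation and requires the second non-degeneracy condition $|\tx_n|\geq\varepsilon$, again from \eqref{cond:non:degen:xy}, to divide by $\tx_{n+1}$ safely; the term $\rho u^{(n)}$ and the cross terms $u z$, $\tx w$ in $\dot v^{(n)}$ get absorbed. Collecting, one gets $|\sigma_{n+1}-\sigma|+|\rho_{n+1}-\rho| \leq \tfrac{C''}{\varepsilon^2}\big(\mu_1^{-1}+\mu_2^{-1}\big)\big(|\sigma_n-\sigma|+|\rho_n-\rho|\big)$, and choosing $C'$ in \eqref{cond:mu:converge:xy} so that $\tfrac{C''}{\varepsilon^2}(\mu_1^{-1}+\mu_2^{-1})\leq\tfrac12$ gives \eqref{eq:converge:xy}. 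The positivity claim $\sigma_n,\rho_n>0$ then follows by induction: since $|\sigma_n-\sigma|\leq M$ could be close to $\sigma$, one instead uses that after the first update the errors contract geometrically, so $|\sigma_n - \sigma|$ is eventually far smaller than $\sigma$; more carefully, one argues $|\sigma_{n+1}-\sigma|\le\tfrac12|\sigma_n-\sigma|\le\cdots$ keeps $\sigma_{n+1}$ within a shrinking neighborhood of $\sigma>0$, hence positive once one checks the base case is handled by the large-$\mu$ regime controlling the first relaxation, or one simply restricts attention to the set where positivity is preserved and notes the iteration never leaves it.

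The technical heart, and the step I expect to be the main obstacle, is making the ``relaxation'' rigorous: the heuristic says $\dot u^{(n)}(t_{n+1}^-)$ and the quadratic terms are negligible, but one must actually prove that over the epoch $I_n$ the derivative system \eqref{eq:diff:dot} drives $\gamma^{(n)},\delta^{(n)},\eta^{(n)}$ to be $O(\mu^{-1})$-small relative to the parameter errors, \emph{and} that this happens before $t_{n+1}$ — i.e. the epoch is long enough. This requires a Grönwall-type argument on the coupled $(u,v,w)$ and $(\gamma,\delta,\eta)$ systems where the large-$\mu$ damping dominates the $O(R)$ coupling coefficients; the coupling between the $\sigma$-error dynamics ($u$) and the $\rho$-error dynamics ($v$) — absent in \cref{thm:convergence:x} — is the new difficulty, since $v$ feeds into $\dot u$ and $u$ feeds into $\dot v$, so one needs the damping to beat the off-diagonal terms simultaneously in both components, which is exactly why \eqref{cond:mu:large:xy} asks $\mu_1$ \emph{and} $\mu_2$ to be large. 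I would organize the a priori estimates so the bounds are stated for a generic pair of nudged components and then specialized, matching the remark \cref{rmk:multi:proof} that the other parameter combinations follow by the same scheme. One more subtlety is the jump in the solution at each $t_n$: the state $(\tx,\ty,\tz)$ is continuous across updates but $(\gamma,\delta,\eta)$ is not, so the a priori bounds must be re-established at each $t_n^+$ from the (bounded) values at $t_n^-$, which is routine given uniform control of the right-hand sides but must be stated carefully to close the induction.
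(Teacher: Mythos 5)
Your overall route is the same one the paper takes: prove the contraction by combining the a priori estimates of \cref{sect:apriori} (\cref{cor:diff:energy}, \cref{cor:diff:dot:energy}, and, for the multi-parameter case, \cref{prop:remainder}) with the exact identities obtained from the $\dot u$- and $\dot v$-equations, divide by the non-degenerate denominators supplied by \eqref{cond:non:degen:xy}, and then choose $\mu_1,\mu_2$ large via \eqref{cond:mu:converge:xy}. This is precisely how the paper argues: the detailed proof is given for \cref{thm:convergence:x} via \eqref{eq:sigma:diff:proof}, and \cref{rmk:multi:proof} handles $(\s,\rho)$ exactly as you propose, using $\rho_{n+1}-\rho=\bigl(\dot v_{n+1}-\rho u_{n+1}+u_{n+1}w_{n+1}+u_{n+1}z_{n+1}+x_{n+1}w_{n+1}+v_{n+1}\bigr)/\tx_{n+1}$ and noting that the quadratic terms are already controlled by the a priori bounds. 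Your concern about epochs being ``long enough'' is also how the paper proceeds: in the proof the update times are taken beyond an explicit waiting time of order $\mu^{-1}\log_+\bigl(\mathcal{K}\mathcal{L}/(KL)^2\bigr)$ so the exponential transients have decayed. (A minor quantitative point: the estimates give $|u|,|v|,|\dot u|,|\dot v|\lesssim\mu^{-1/2}\bigl(|\sigma_n-\sigma|+|\rho_n-\rho|\bigr)$, not $\mu^{-1}$; this only changes the power of $\mu$ in your final display, and the same scaling $\mu_i\geq C'/\veps^2$ closes the argument.)

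The one step that does not go through as written is the final claim $\s_n,\rho_n>0$. A contraction factor of $\tfrac12$ alone does not give positivity: if, say, $M=10$ and $\s=1$, then after the first update $|\sigma_1-\sigma|$ could still be as large as $M/2=5$, so $\sigma_1$ could be negative; and ``restricting attention to the set where positivity is preserved'' is circular, since nothing in the update rule \eqref{def:update} confines the iterates to that set a priori. The paper's fix is to build the $M$- and parameter-dependence into the contraction constant: in the proof of \cref{thm:convergence:x} the per-step factor is taken to be $\eps=(\s\wedge1)/(2M)\leq\tfrac12$ (here the analogue would be $(\s\wedge\rho\wedge1)/(2M)$), which is achieved by enlarging $C'$ — hence $\mu_1,\mu_2$ — accordingly; then already the \emph{first} update satisfies $|\sigma_1-\sigma|\leq\s/2$ and $|\rho_1-\rho|\leq\rho/2$, and inductively every iterate stays in this neighborhood, so positivity holds at each step rather than only eventually. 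You gesture at this (``the base case is handled by the large-$\mu$ regime''), but you need to make the dependence of $C'$ on $M$, $\s$, $\rho$ explicit; with that adjustment your argument matches the paper's.
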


\subsection{Comparison with direct-replacement data assimilation}\label{sect:direct:replace}

Rather than inserting observations into the system via feedback control as in \eqref{eq:Lor:ng}, one may instead directly substitute the observed quantity into the equations themselves. As a data assimilation algorithm, this was first studied for the Lorenz equations and 2D Navier-Stokes equations in \cite{hayden2011discrete}. We can extend these ideas in an analogous way to the feedback control-based approach developed earlier to parameter recovery.

To fix ideas, assume that $\{x(t)\}_{t\geq 0}$ is given and consider the problem of recovering $\s$ from this information alone. Let $\til{\s}$ represent a guess for $\s$. Then the corresponding modeled system is given by
\begin{empheq}[left=\empheqlbrace]{align}
\label{eq:Lor:dr}
\begin{split}
       \tx &= x\\
       \dot{\ty} &= \rho x - \ty - x\tz  \\
       \dot{\tz} &= x\ty - \bt\tz.
\end{split}
\end{empheq}
Note that the first equation in \eqref{eq:Lor:dr} has no time derivative since we simply replace $\tx$ by $x$ directly.
From \eqref{eq:Lor}, we see that $\s=\dot{x}(y-x)^{-1}$, so we make the approximation
    \begin{align}\notag
        \st=\frac{\dot{x}}{\ty-x}.
    \end{align}
It follows that
    \begin{align}\notag
        |\De\s|<\eps,
    \end{align}
provided that
    \begin{align}\label{cond:non:degen:dr}
        \frac{|\dot{x}||\ty-y|}{|\ty-x||y-x|} < \eps.
    \end{align}
    
Several remarks are in order. Firstly, \eqref{cond:non:degen:dr} may be viewed as an analog to the non-degeneracy condition \eqref{cond:non:degen}. In order for the parameter error $|\De\s|$ to be small using the direct-replacement approach, either $\ty$ must be sufficiently close to $y$ or otherwise, the product of the distance between $\ty$ and $x$ or $y$ and $x$ must be sufficiently large. When \eqref{eq:Lor} exhibits non-trivial dynamics, $|\dot{x}|$ is typically large, potentially making \eqref{cond:non:degen:dr} more difficult to satisfy. From this point of view, the feedback control-based approach provides a degree of flexibility through the tunable parameter $\mu_1$ that is not present in the direct-replacement approach. 

Secondly, the effectiveness of the direct-replacement approach appears to rely on having a sufficient density of observations in time to approximate the time-derivative, $\dot{x}$. Of course, this is not an issue when observations are collected continuously in time. In practical scenarios where measurements are taken at discrete times, however, the time-derivative may be difficult to accurately approximate. The feedback control-based approach, on the other hand, does not require knowledge of time derivatives. Information on the time derivatives of the state variables can be quite powerful. Indeed, it was proved  in \cite{WingardThesis} that knowledge of $\dot{x}$ and $x$ at a single point in time are sufficient to reconstruct $y$ and $z$ at that time.

In spite of these remarks, the direct-replacement approach to parameter recovery warrants further study, both analytically and numerically, especially as a basis of comparison to the feedback control-based approach that is the main focus of this article.

\section{Computational results}\label{sec:comps}

We present a computational study of the parameter learning algorithm described above in Section \ref{sect:multi}. First we conduct a sweep of the Lorenz system's three-dimensional parameter space with observations collected continuously in time. These results illustrate the dependence of the parameter learning algorithm on the dynamical behavior of the reference solution. They additionally demonstrate the robustness of the algorithm to variations of the system parameters. We then proceed to investigate situations beyond those captured in Theorem \ref{thm:convergence:x} and Theorem \ref{thm:convergence:xy}. In particular, we introduce aspects of the problem that are closer to reality, including discrete observations in time, noisy observations, and stochastic forcing in the underlying system. Partial observations with stochastic errors have previously been studied in the setting of the Navier--Stokes equations \cite{bessaih2015continuous, BlomkerLawStuartZygalakis2013}, but under the assumption that the viscosity parameter was already known. The demonstrated effectiveness of the parameter learning algorithm in the presence of stochastic observation error (see Figure \Cref{fig:srb08_noise_part1,fig:srb08_noise_part2}) motivates future work to develop a rigorous justification, perhaps by amending the proof of Theorems \ref{thm:convergence:x} and \ref{thm:convergence:xy}. We refer the reader to \cite{CialencoGlattHoltz2011} for an analytical study of parameter estimation for the 2D stochastically perturbed Navier-Stokes equation, where only finitely many Fourier modes are observed.

Note that data assimilation for the Lorenz equations with sampling of only the $x$ variable was studied both analytically and computationally in \cite{hayden2011discrete} using a replacement scheme rather than a nudging scheme (\cite{hayden2011discrete} also studied model replacement strategies for 2D Navier-Stokes).  In \cite{hayden2011discrete}, it was assumed that the parameters $\sigma$, $\rho$, and $\beta$ were known exactly.  Of course, the goal of the present work is to show that even if the parameters are unknown, the true parameters can be learned from observations using the schemes described above.  Here, we only study parameter learning using a nudging scheme, but parameter learning using replacement schemes will be studied in a future work.

\subsection{Numerical methods}\label{sec:numerical_methods}

The code/data used to run simulations and produce the figures is available at
\begin{center}
    \url{https://github.com/unis-ing/lorenz-parameter-learning}.
\end{center}
For the sake of transparency and the convenience of the reader, a compressed version of the MATLAB script used to run simulations in \Cref{sec:mprwssit} involving sparse-in-time observations, stochastic forcing, and observations contaminated by noise can be found in \Cref{sect:matlab}.

\subsubsection{Setup for continuous sampling (\Cref{sec:splwcs}--\Cref{sec:mplwcs})}
The Lorenz system \eqref{eq:Lor} and the assimilating system \eqref{eq:Lor:ng} were solved using LSODA, which is a variant of the Livermore Solver for Ordinary Differential Equations (LSODE). The LSODA routine switches between nonstiff (Adams type) and stiff (backward differentiation) methods. Upon initialization, LSODA begins with a nonstiff method and then tracks the nature of the underlying ODE to determine whether the stiff or nonstiff solver is more appropriate. The time-step in LSODA is adaptive and chosen to minimize the local error \cite{radhakrishnan1993lsode}. We ensure that the Lorenz system is initialized within the absorbing ball for every simulation reported here by integrating it forward 5 time units from the position $(x,y,z) = (60,60,10)$. The assimilating system is initialized at $(\tx,\ty,\tz) = (0.1,0.1,0.1)$.

\subsubsection{Setup for sparse sampling (\Cref{sec:mprwssit} and \Cref{sec:mprwssit_noise})}
The Lorenz system \eqref{eq:Lor} and the assimilating system \eqref{eq:Lor:ng} were simulated using the explicit first order Euler method. For these experiments, we avoided using higher-order Runge-Kutta methods because such multi-stage methods, when applied to nudging-based schemes, violate the principle of having so-called ``identical discrete dynamics'' and can misrepresent the error (see, e.g., the discussion in Section 4 of \cite{olson2008determining}). In addition to the forward Euler method, we also treated the linear terms implicitly using an exponential time-differencing (ETD) method, but saw no significant differences in the results.  Hence, for simplicity, only the explicit forward Euler time-stepping simulations are reported here.

\subsection{Single-parameter learning with continuous sampling}\label{sec:splwcs}
As the first point of comparison, we suppose that only $\sigma$ is unknown (both $\beta$ and $\rho$ are known exactly) and that $x(t)$ is continuously observed, i.e., $x(t)$ is known for all values of $t$. This is precisely the situation hypothesized in \cref{thm:convergence:x} above. Recall from \cref{sect:derive} that $\frac{du}{dt}$ was implicitly assumed to be negligible in deriving the parameter update formulas. This suggests that one must ensure this property holds prior to making an update to the parameter. In principle, it is possible to enforce this property in the numerical setting by approximating $\frac{du}{dt}$ via finite differences, although we have observed parameter convergence from simply enforcing $u \approx 0$. To enforce the latter condition, we implement thresholds on $u$ of the form $|u(t)| \leq \theta$, then systematically decrement $\theta$ with each parameter update. An example of a decrementing mechanism would be to fix a constant $0<d<1$ and update the threshold as $\theta \mapsto d\theta$ after each parameter update. However, ``thresholding" in this way requires detailed tuning of $d$ and the initial value of $\theta$. We propose an alternative method for threshold selection that leverages the observational data and does not introduce additional algorithmic parameters.

Let $t_n$ be the time of the previous update and $t>t_n$ be the current time. Let $b_n,~m_n \in \mathbb{R}$ be a linear fit of the dataset $\{\log{|u(s)|}\,:\,t_n\leq s< t\}$. That is, $b_n,~m_n$ minimize the mean squared error over $\{\log{|u(s)|} - (m_n s + b_n)\,:\,t_n\leq s< t\}$. The threshold $\theta$ at time $t$ is then defined to be
\begin{align}\label{eq:threshold}
    \theta(u,t_n,t) = \exp{m_n t + b_n}.
\end{align}
In requiring $|u(t)|$ to be bounded by $\theta$ as defined, one eliminates candidates for the next update time $t_{n+1}$ in which $|u(t_{n+1})|$ is a local maximum. Since the formula was derived implicitly assuming that both $u$ and $\frac{du}{dt}$ are small, the best candidate times for updates are expected to be in neighborhoods of local minima of $u$. Further comments regarding the threshold $\theta$ can be found in \cite{ng2021dynamic}.

The entire procedure for estimating $\sigma$ with continuous observations in $x(t)$ is given in Algorithm \ref{alg:cts_sampling_sigma_recovery}. The algorithm is robust to a wide range of initial estimates, provided that $\mu_1$ and $T_R$ are taken large enough (see \cite{ng2021dynamic}). We were able to recover $\sigma$ from the reference values $\sigma=10,\,\rho=28,\,\beta=8/3$ up to an error of magnitude $\mathcal{O}\left(10^{-13}\right)$ with an initial parameter error of magnitude $\mathcal{O}\left(10^3\right)$. In Figure \ref{fig:cts_sampling_demo}, we apply Algorithm \ref{alg:cts_sampling_sigma_recovery} and observe the evolution of the ``position error", i.e., $\norm{(u,v,w)}$, and ``velocity error", i.e., $\norm{(\gam,\de,\eta)}$. The analytically derived upper bounds on both position and velocity error (Corollary \ref{cor:diff:energy} and Corollary \ref{cor:diff:dot:energy}) hold remarkably well. On the other hand, the restriction \eqref{cond:mu:large} specified for $\mu_1$ was calculated to be on the order of $\mathcal{O}(10^7)$, which we found to be quite excessive; in practice, a sufficient value for $\mu_1$ to accurately infer $\s$ is $\mathcal{O}(10^2)$. It should be noted that $\mu_1$ is an algorithmic parameter, not a physical one, so it may be tuned as the user prefers, so long as solutions remain stable.  For example, for rapid convergence, it is often desirable to choose it as large as linear stability will allow, that is, $\mu_1\lesssim2/\Delta t$, where $\Delta t$ is the (largest) time-step.

\begin{figure}[ht]
\centering
\includegraphics[width=.68\textwidth]{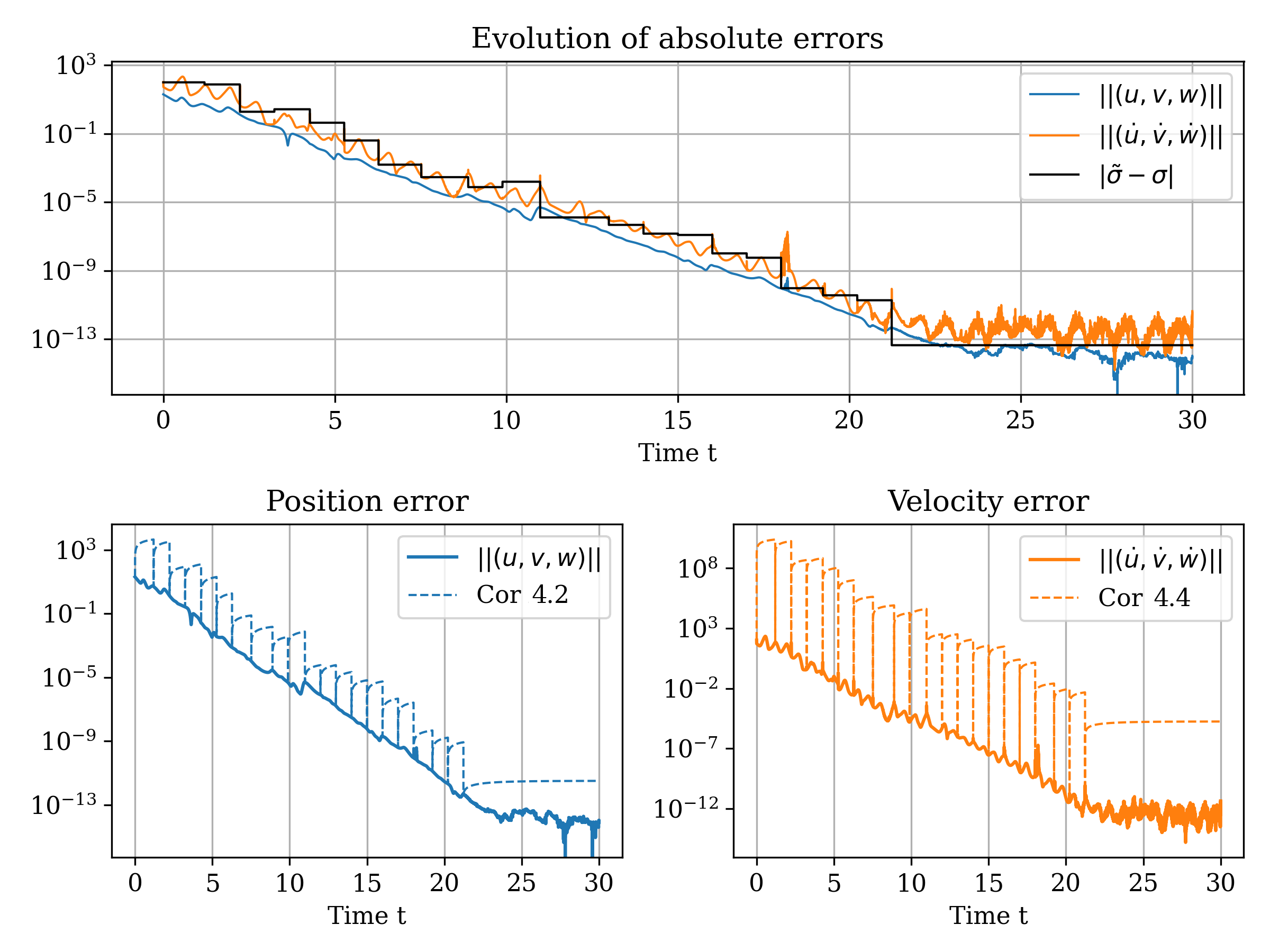}
\caption{The parameter learning algorithm is applied to the true parameters $\sigma=10,\, \rho=28,\, \beta=8/3$ with $\rho,\beta$ known and $\sigma$ recovered from continuous observations in $x(t)$. The initial guess is $\sigma_0 = \sigma+100$ and the algorithm parameters were set to $\mu_1=500$, $T_R=1$. The analytically derived upper bounds on position and velocity error from Corollary \ref{cor:diff:energy} and Corollary \ref{cor:diff:dot:energy} are shown to hold remarkably well.}
\label{fig:cts_sampling_demo}
\end{figure}

\begin{figure}[ht]
    \centering
    \includegraphics[width=.43\textwidth]{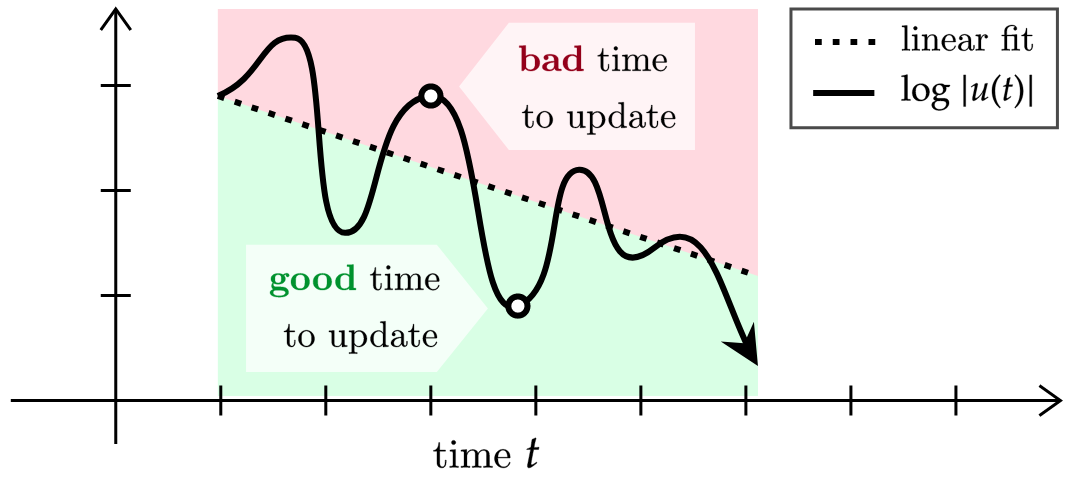}
    \caption{Schematic of the threshold defined by \eqref{eq:threshold}.}
    \label{fig:log_fit_schematic}
\end{figure}

\begin{figure}[htbp]
\normalem 
\begin{center}
\begin{minipage}{0.75\linewidth}
    \begin{algorithm}[H]
    \SetKwInOut{Inputs}{Inputs}
    \SetKwFunction{Polyfit}{np.polyfit}
    \SetAlgoLined
    \Inputs{Initial estimate $\sigma_0$; relaxation period $T_R \geq0$; relaxation parameter $\mu_1>0$}
    $t \leftarrow 0$\;
    $t_n \leftarrow 0$ \tcp*{Time of last update}
    $\widetilde{\sigma} \leftarrow \sigma_0$\;
    \For{$i=0,1,\ldots$}{
        \If{$|\xt(t)-x(t)|>0$}{
            \If{$\xt(t)\neq\yt(t)$ {\bf and} $t-t_n\geq T_R$ {\bf and} $|\widetilde{x}(t)-x(t)| \leq \theta(u,t_n,t)$}{
                    $\widetilde{\sigma} \leftarrow \st - \mu_1 u(t) \,/\, (\yt(t)-\xt(t))$\; 
                    $t_n \leftarrow t$\;
            }
        }
        $t \leftarrow t+\Delta t$\;
        Integrate \eqref{eq:Lor}, \eqref{eq:Lor:ng} forward to $t$\;
    }
    \caption{Recovering $\sigma$ from continuous observations $\{x(t)\}_{t\geq0}$ \label{alg:cts_sampling_sigma_recovery}}
    \end{algorithm}
\end{minipage}
\end{center}
\ULforem 
\end{figure}

\FloatBarrier

\subsection{Minimum \texorpdfstring{$\mu$}{\textmu} for parameter learning}

The analysis in \cref{sect:proofs} suggests that $\mu_1$ can be decreased as the nudged system synchronizes with the true system and the estimate $\st$ improves over time. In our simulations we have chosen $\mu_1$ to be constant in time for the sake of simplicity. Nonetheless, we found that the numerical lower bound for a constant $\mu_1$ is much lower than the restriction stated in Theorem~\ref{thm:convergence:x}. We compute the numerical lower bound in Table \ref{tab:minimal_mu}, denoted $M_c$, by taking the largest multiple of 10 such that the parameter converges to the true value within $\mathcal{O}\left(10^{-5}\right)$ error. We refer the interested reader to \cite{ng2021dynamic} for the behavior of $M_c$ over a larger region of the $(\sigma,\rho)$-plane.

\begin{table}[ht]
    \[
    \begin{array}{||c c c c||} 
        \multicolumn{4}{c}{\sigma=10} \\ \hline
        \rho & \mu_c & M_c & |\Delta\sigma| \\ [0.5ex] \hline\hline
        30 & 2.6525\times10^{7} & 20 & 6.2368\times10^{-12} \\ \hline
        50 & 1.3384\times10^{8} & 20 & 1.2981\times10^{-9} \\ \hline
        100 & 1.5092\times10^{9} & 50 & 2.0904\times10^{-11} \\ \hline
        150 & 6.7527\times10^{9} & 60 & 1.3397\times10^{-9} \\ \hline
        200 & 2.0036\times10^{10} & 60 & 2.9829\times10^{-11} \\ \hline
    \end{array}
    \quad
    \begin{array}{||c c c c||} 
        \multicolumn{4}{c}{\sigma=50} \\ \hline
        \rho & \mu_c & M_c & |\Delta\sigma| \\ [0.5ex] \hline\hline
        30 & - & - & - \\ \hline
        50 & 1.0310\times10^{9} & 210 & 1.9910\times10^{-9} \\ \hline
        100 & 5.2165\times10^{9} & 130 & 8.4269\times10^{-9} \\ \hline
        150 & 1.6484\times10^{10} & 60 & 7.3339\times10^{-6} \\ \hline
        200 & 4.0240\times10^{10} & 90 & 1.7521\times10^{-8} \\ \hline
    \end{array}
    \]
    \caption{The smallest $\mu_1$ resulting in $|\Delta\sigma| \leq 10^{-5}$ is estimated to the largest multiple of 10 (denoted as $M_c$). The analytically derived lower bound for $\mu_1$ in Theorem~\ref{thm:convergence:x} is computed by evaluating (\ref{cond:mu:diff}) at $t=0$ (denoted as $\mu_c$). The values are omitted if $\sigma$ could not be recovered from any of the tested values of $\mu_1$ (see Section \ref{sect:dyn:dep} for further discussion). The initial estimate was set to $\sigma_0=\sigma+10$, each simulation was run up to $t=200$ time units, and the relaxation period parameter was fixed at $T_R=5$.}
    \label{tab:minimal_mu}
\end{table}

\subsection{Dependence on dynamical behavior}\label{sect:dyn:dep}

\begin{figure}[ht]
    \centering
    \includegraphics[height=6cm]{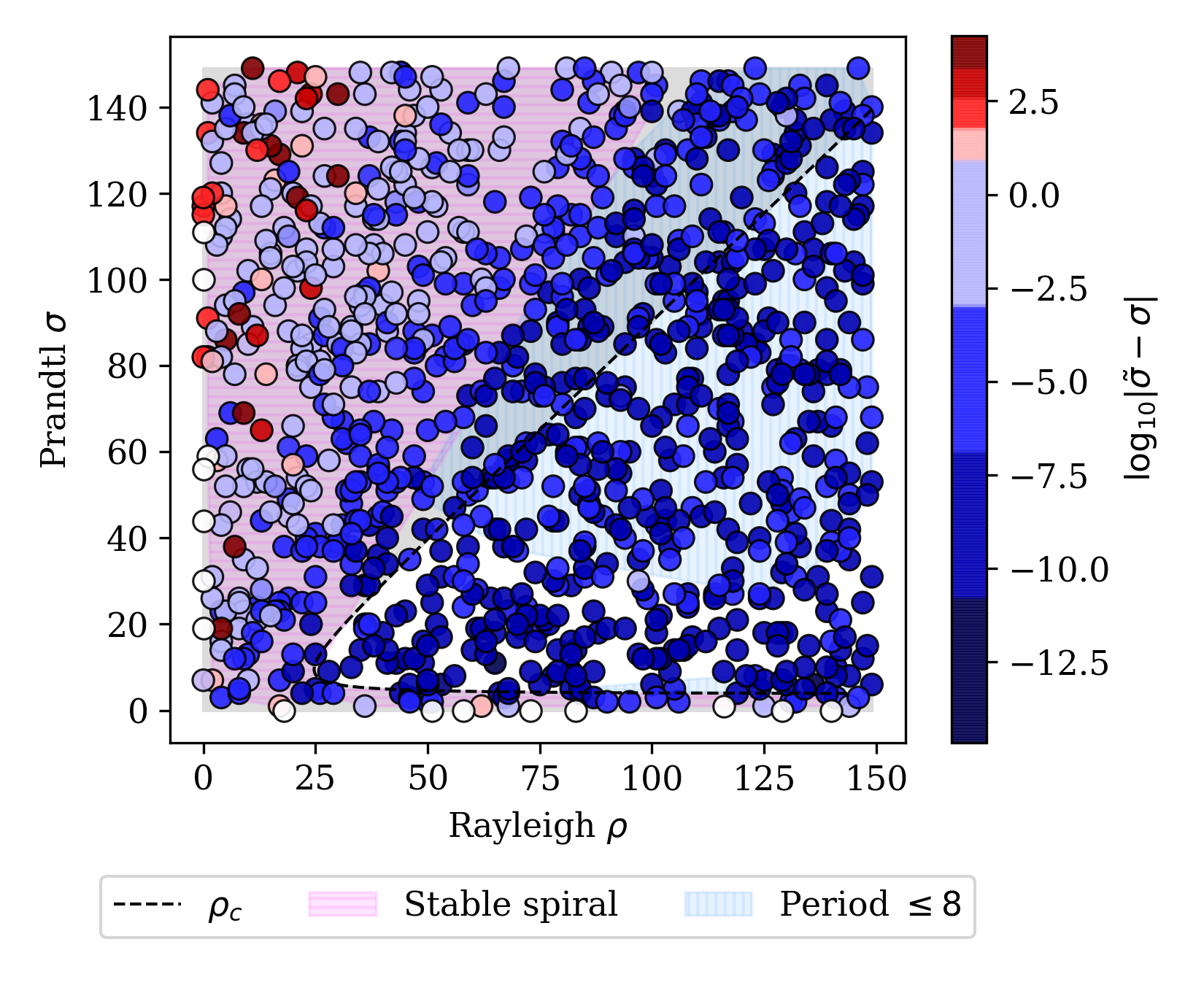}
    \includegraphics[height=6cm]{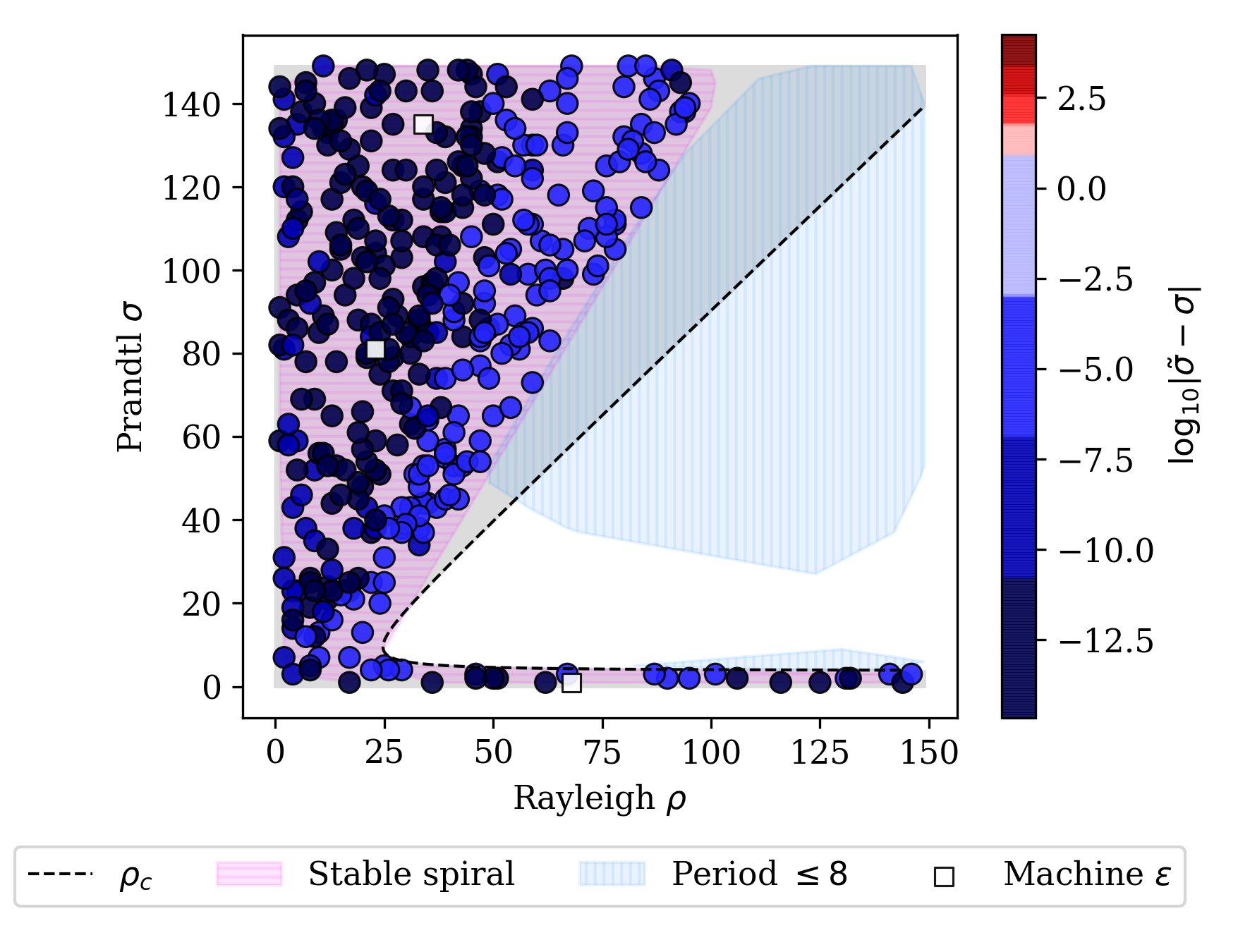}
    \caption{(Left) The parameter learning algorithm is used to recover $\sigma$ from 1,000 randomly sampled pairs $(\rho,\sigma) \in [0,150]^2$, with $\beta=8/3$ fixed. The initial estimate used is $\sigma_0=\sigma+10$, and the algorithm parameters are fixed at $\mu_1=10,000$ and $T_R=5$. Each simulation is run to $t=75$ time units. The color corresponds to the resulting absolute parameter error $|\st-\sigma|$: red signifies $|\st-\sigma|>|\sigma_0-\sigma|$; white signifies $|\st-\sigma|=|\sigma_0-\sigma|$; blue signifies $|\st-\sigma|<|\sigma_0-\sigma|$. The period of each solution is computed using the Poincar\'e surface of section method described in \cite{dullin2007extended}. (Right) For each $(\rho,\sigma)$ where $P_\pm$ is stable, we observe the variable $z_\tau = z-\sigma-\rho$ instead of $x$ and use the alternate formula \eqref{eq:sig_recovery_transl} to recover $\sigma$.}
    \label{fig:singleparam}
\end{figure}

As we might anticipate for any type of inverse problem with a complicated forward map, the $\sigma$-learning problem does not always have a unique solution. For example, if $x \approx y$ for all $t\geq 0$, then there is no unique value of $\sigma$ which satisfies $\frac{dx}{dt}=\sigma(y-x)\approx 0$. This is exactly the scenario that occurs when one of two nontrivial fixed points of the Lorenz equations is stable, since the nontrivial fixed points
\begin{align}
    P_\pm &= (x_\pm, ~y_\pm, ~z_\pm) = (\pm\sqrt{\beta(\rho-1)}, ~\pm\sqrt{\beta(\rho-1)}, ~\rho-1).\notag
\end{align}
satisfy $x_\pm=y_\pm$. Note that the stability of the Lorenz equations can be analyzed using the usual linearization technique (see \cite{lorenz1963deterministic}). The origin is a fixed point for all parameters and is stable for $0\leq\rho> 1$. The fixed points at $P_\pm$ exist for $\rho>1$ and are linearly stable up to a critical value $\rho=\rho_c$, where stability is lost in a subcritical Hopf bifurcation. This critical value is given by $\rho_c = \sigma (\sigma+\beta+3) / (\sigma-\beta-1)$. 

Thus, if $x(t)- y(t)\not\approx0$ at a single time $t$, the issue of non-uniqueness is not expected to play a role. Indeed, the purpose of the non-degeneracy condition  \eqref{cond:non:degen} in \cref{thm:convergence:x} is to exclude such a pathological scenario. Note, however, that this non-degeneracy is only enforced on the nudged system, and \textit{not} on the observations; we believe that this allows an additional layer of flexibility in our algorithm.

A parameter sweep of the $(\rho,\sigma)$-plane with the third parameter fixed at $\beta=8/3$ confirms that the parameter learning algorithm implementing \eqref{def:update} as the parameter update formula performs noticeably worse or altogether fails precisely when a stable fixed point exists (see Figure \ref{fig:singleparam}). The same experiment demonstrates that the algorithm is robust to variations in the $(\rho,\sigma)$-plane, as long as $\mu_1$ is sufficiently large and the reference solution exhibits chaotic or at least presents non-degenerate dynamics. Comprehensive computational studies such as \cite{barrio2007study} and \cite{dullin2007extended} show that a nontrivial global attractor exists for an unbounded subset of the full three-dimensional Lorenz parameter space, and the current investigation indicates that the parameter learning algorithm developed here should work for all of these parameters.

All this suggests that the non-uniqueness of the inverse problem plays a pronounced role depending on which parameter is being inferred and which variable(s) of the Lorenz system is (are) being observed. The particular case described above, where $\s$ is being inferred and the $x$-variable is being observed, is an explicit scenario in which the non-uniqueness appears to play a significant role. If one instead observes the $z$-variable, an \textit{alternative} update formula for $\s$ may be used in place of the one proposed in \eqref{def:update}, which eliminates the non-uniqueness. Indeed, consider the `translated' state variable $z_\tau = z-\sigma-\rho$. If one were to observe $\{z_\tau(t)\}_{t\geq 0}$ rather than $\{x(t)\}_{t\geq0}$, then $\sigma$ could be learned using the following equation for the translated fixed points:
\begin{align}
    P_{\pm,\tau} = (x_{\pm,\tau}, ~y_{\pm,\tau}, ~z_{\pm,\tau}) = (\pm\sqrt{\beta(\rho-1)}, ~\pm\sqrt{\beta(\rho-1)}, ~-\sigma-1).\notag
\end{align}
When $P_{\pm,\tau}$ are stable, the $P_{\pm,\tau}$ equation yields the alternate recovery formula
\begin{align}\label{eq:sig_recovery_transl}
    \sigma = -z_{\pm,\tau}-1 \approx -z_\tau(t)-1.
\end{align}
In this situation, the corresponding nudged equation is given by \eqref{eq:Lor:ng} with $\rt=\rho$, $\bt=\be$, $\mu_1=\mu_2=0$, and $\mu_3>0$, that is, nudging is only implemented in third variable. We see that for each pair $(\rho,\sigma)$ where $P_\pm$ is stable, applying the modified update formula results in the successful recovery of $\sigma$ (see Figure \ref{fig:singleparam}).

\subsection{Multi-parameter learning with continuous sampling}\label{sec:mplwcs}

So far we have only seen that the algorithm described in \cref{sect:derive} with the update formula \eqref{def:update} is effective for estimating a single parameter, namely $\s$. It can just as easily be adapted to recover two parameters simultaneously provided that the appropriate state variables are observed. In particular, each of the system parameters $\sigma$, $\rho$, and $\beta$, in any combination, can be estimated from continuous observations in $x(t)$, $y(t)$, and $z(t)$ respectively. We modify the update condition by defining $\theta_1, \theta_2, \theta_3$ using the log-linear fit procedure in the $x$, $y$, and $z$ coordinates. Depending on which two parameters are unknown, the two corresponding conditions among
\begin{align}
    |u(t)| \leq \theta_1(u,t_n,t), \quad |v(t)| \leq \theta_2(v,t_n,t), \quad |w(t)| \leq \theta_3(w,t_n,t),\notag
\end{align}
are enforced. As before, we perform a sweep of the parameter plane where the remaining third parameter is fixed. We observe that recovery distinctly fails in both parameters when a stable fixed point occurs, in accordance with what was observed in \cref{sect:dyn:dep} (see Figure \ref{fig:two_parameter_recovery}).

\begin{figure}[ht]
    \centering
    \includegraphics[height=4.9cm]{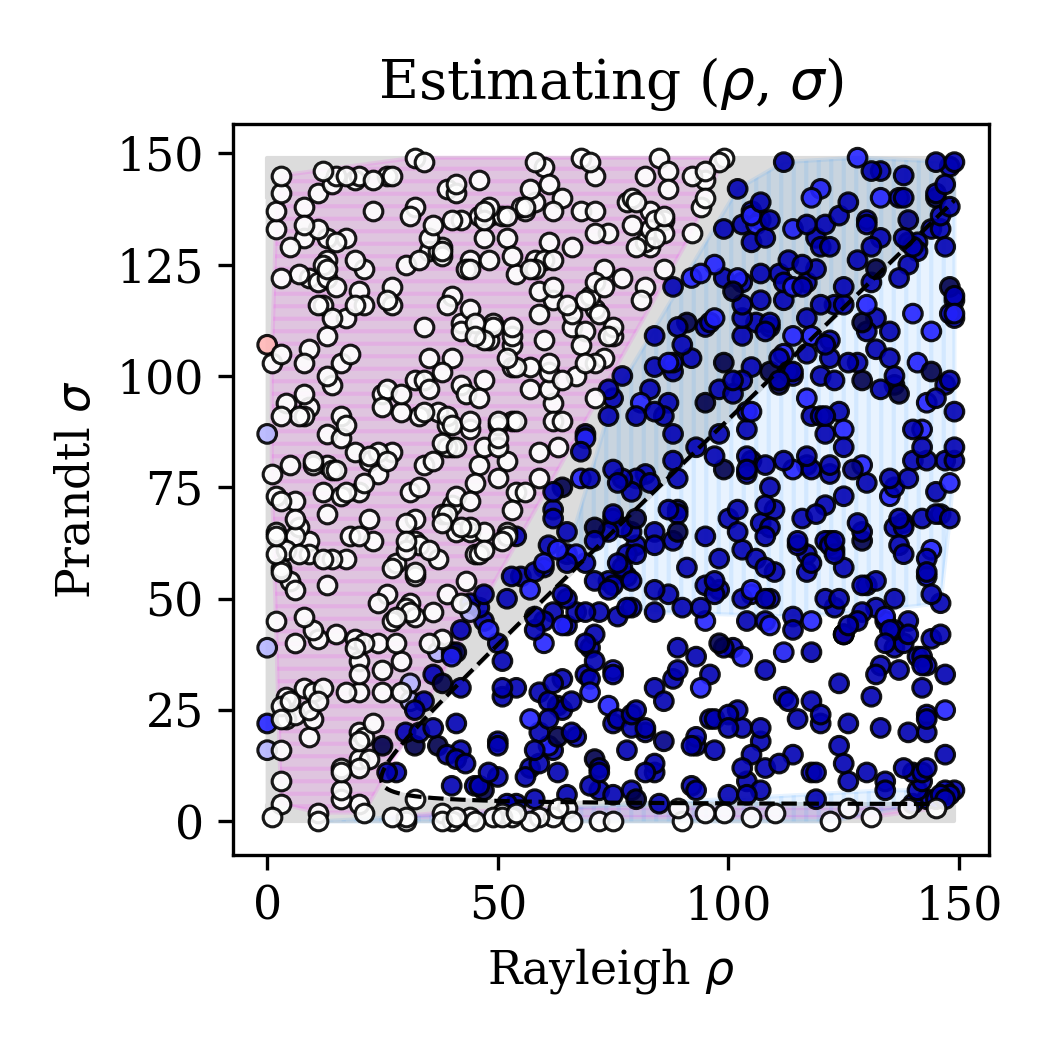}
    \includegraphics[height=4.9cm]{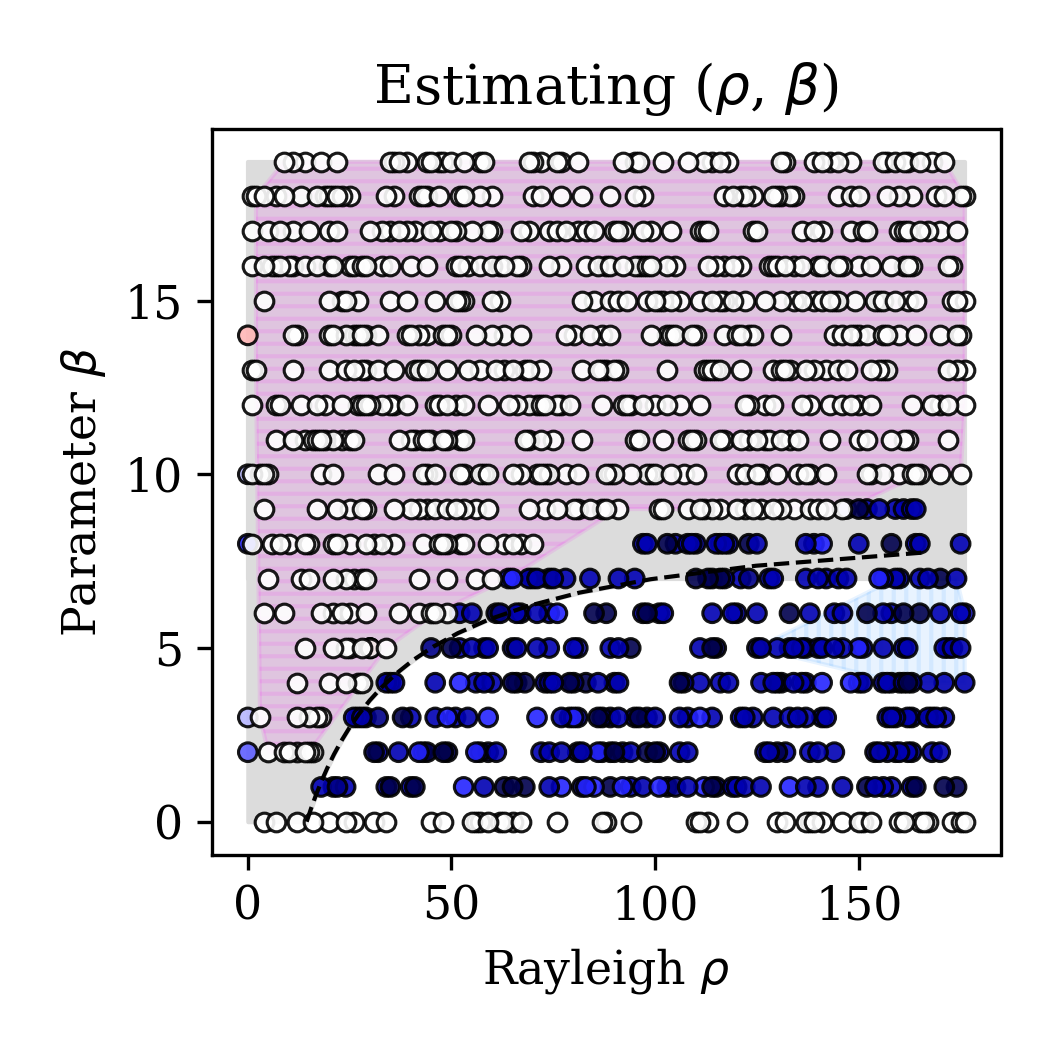}
    \includegraphics[height=4.9cm]{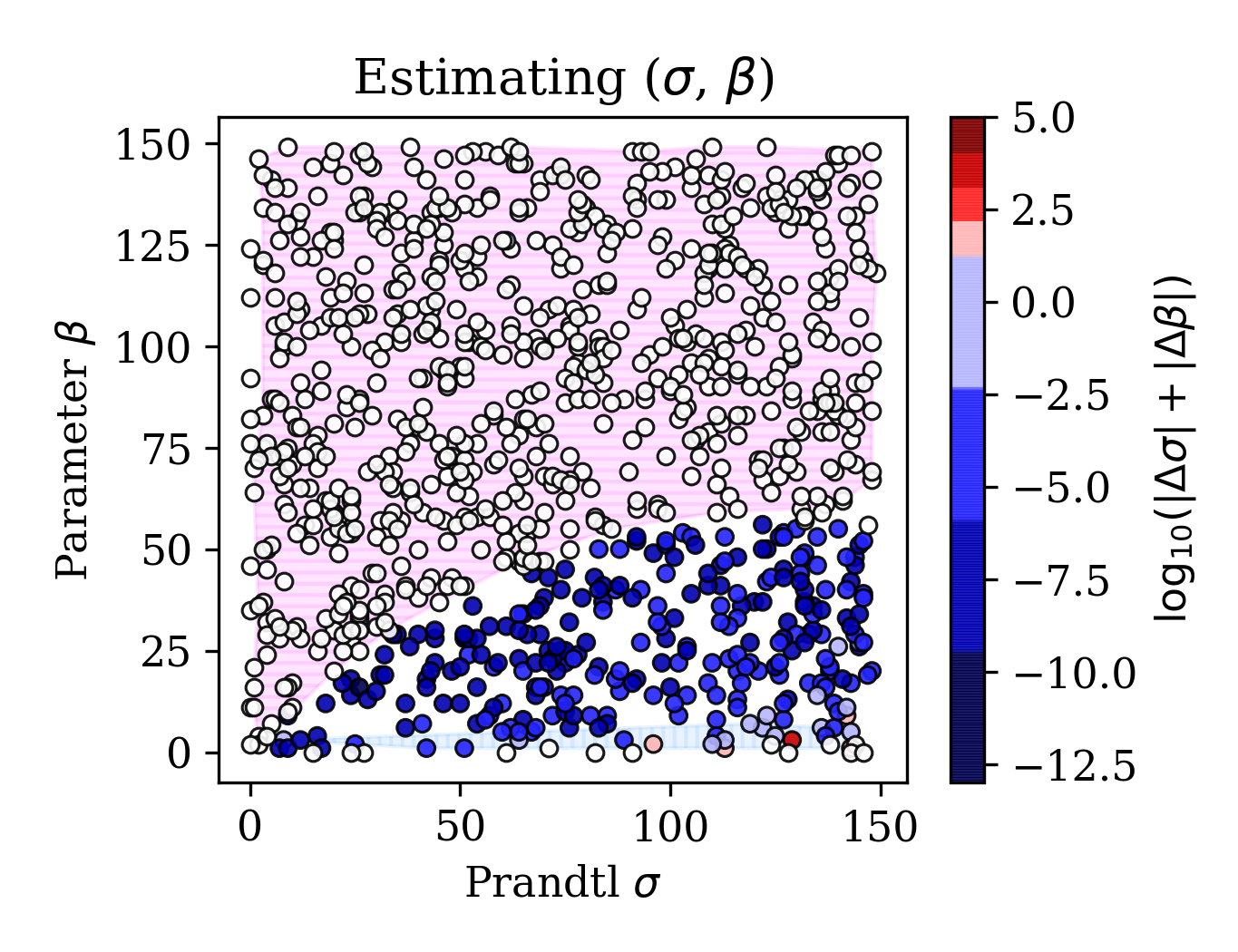}
    \caption{Two parameters are recovered simultaneously. When applicable, the third parameter is fixed at $\sigma=10,~\rho=300,~\beta=8/3$. The initial parameter guess is 10 above the true value. The two relaxation parameters, corresponding to the unknown parameters, are fixed at $10,000$, and the relaxation period is $T_R=5$. Each simulation is run out to $t=50$ time units.}
    \label{fig:two_parameter_recovery}
\end{figure}

\FloatBarrier

\subsection{Multi-parameter recovery with sparse sampling}\label{sec:mprwssit}
In this section, we demonstrate that the parameter learning algorithms discussed above also work with sparse-in-time observations. That is, we observe the state of the system \eqref{eq:Lor} at discrete times that are widely spaced in comparison to other time scales in the system, as well as the numerical time-step.  This modification is non-trivial and leads to some surprising differences from the case of continuous-in-time observations, as discussed below.

Several issues arise at the implementation level, perhaps unexpectedly, from combining intermittent observations with dynamical parameter correction via \eqref{def:update}.  In particular, a balance must be struck between three distinct time scales: the time between observations $\Delta t_{\text{obs}}$, the time between parameter updates $\Delta t_{\text{param}}$, and the length of time that the nudging algorithm is applied $\Delta t_{\text{AOT}}$ (this does not include the time-step for the ODE, for which we used $\Delta t=\texttt{0.0001}$).  For instance, in our simulations, we observe every $500~\Delta t$, but update the parameter(s) every $20,000~\Delta t$ (i.e. $\Delta t_{\text{obs}}=\texttt{0.05}$ and $\Delta t_{\text{param}} = \texttt{2.0}$).  In particular, we found that the values of $\mu_i$ ($i=1,2,3$) used in the parameter update formulas \eqref{def:update} needed to be smaller than those used in \eqref{eq:Lor:ng} by several orders of magnitude. Otherwise, the simulation was not stable.  Denoting $\mu_i^{{\text{AOT}}}$ for the $\mu_i$ used in \eqref{eq:Lor:ng}, and $\mu_i^{\text{param}}$ for those used in \eqref{def:update}, we used $\mu_i^{\text{param}} = \mu_i^{{\text{AOT}}}\Delta t$.  This is an unexpected observation\footnote{In the case of continuous observations (or, more accurately, $\Delta t_{\text{obs}}=\Delta t$), numerical instability did not arise when $\mu_i^{\text{param}} = \mu_i^{{\text{AOT}}}$. Hence, the condition $\mu_i^{\text{param}} < \mu_i^{{\text{AOT}}}$ is only needed in the case of intermittent observations.} that has important implications for real-world implementations; hence, it will be the subject of a future larger-scale investigation.

The primary results of our investigations are shown in Figure \ref{fig:srb08}.  We see that in all cases except when only $z$ is observed with the unknown parameter $\beta$, the true solution and unknown parameters were recovered exponentially fast to near machine precision. We note that the results displayed here hold qualitatively for a variety of different choices of the true parameters $\sigma,~\rho$ and $\beta$, including large values of $\rho$ which lead to much more chaotic dynamics. For the sake of brevity, we only present results with the standard Lorenz parameters $\sigma=10,\,\rho=28,\,\beta=8/3$, and the initial guesses of the parameters given by $\til{\sigma}=0.8\sigma,\,\til{\rho}=0.8\rho,\,\til{\beta}=0.8\beta$.  However, we emphasize that the results reported here are relatively independent of the initial guess for each of the parameters.

\begin{figure}[ht]
\centering
\begin{subfigure}{.32\textwidth}
  \centering
  \includegraphics[width=1.0\linewidth,trim = 0mm 0mm 0mm 0mm, clip]{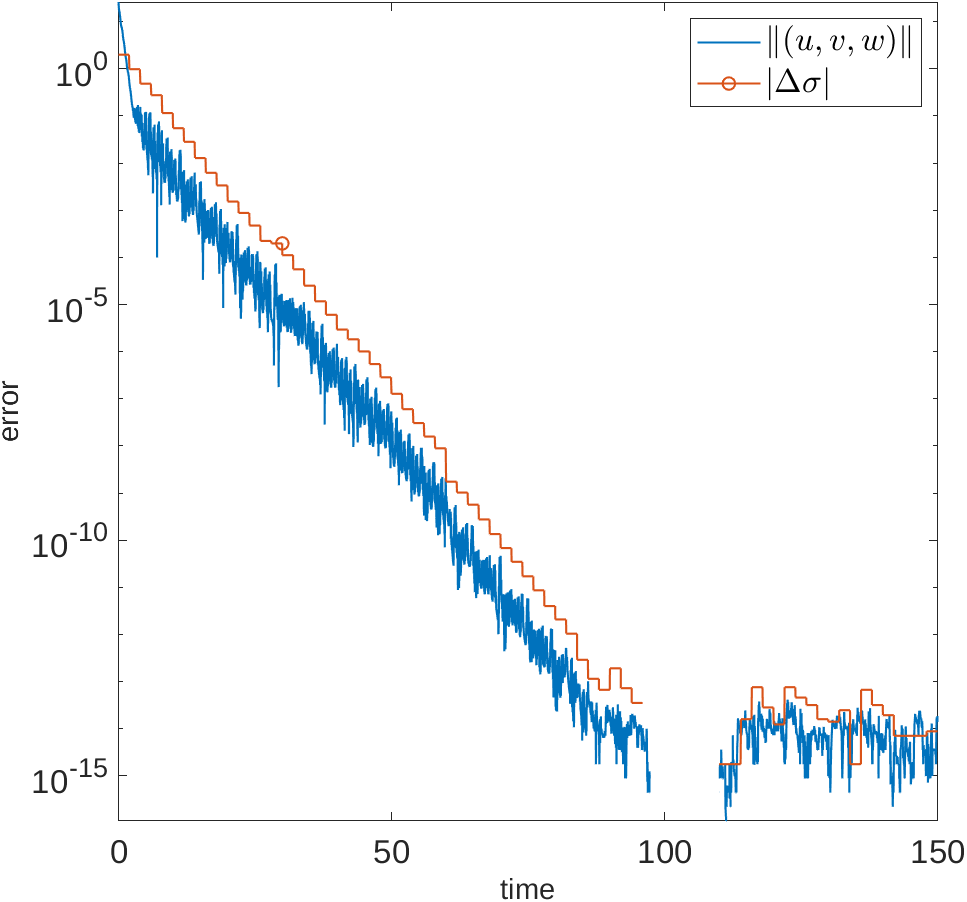}
  \caption{\label{subfig:srb08_100}$\widetilde{\sigma}=0.8\sigma$, $\widetilde{\rho}=\rho$,  $\widetilde{\beta}=\beta$}
\end{subfigure}
\begin{subfigure}{.32\textwidth}
  \centering
  \includegraphics[width=1.0\linewidth,trim = 0mm 0mm 0mm 0mm, clip]{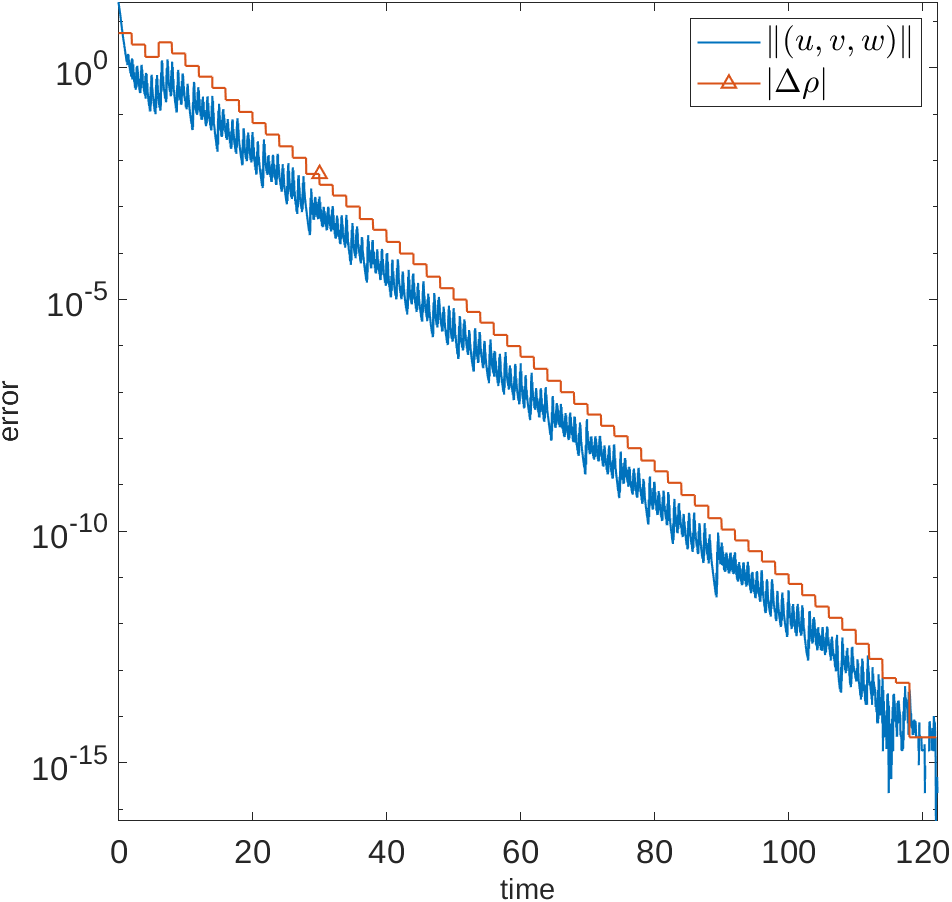}
  \caption{\label{subfig:srb08_010}$\widetilde{\sigma}=\sigma$, $\widetilde{\rho}=0.8\rho$,  $\widetilde{\beta}=\beta$}
\end{subfigure}
\begin{subfigure}{.32\textwidth}
  \centering
  \includegraphics[width=1.0\linewidth,trim = 0mm 0mm 0mm 0mm, clip]{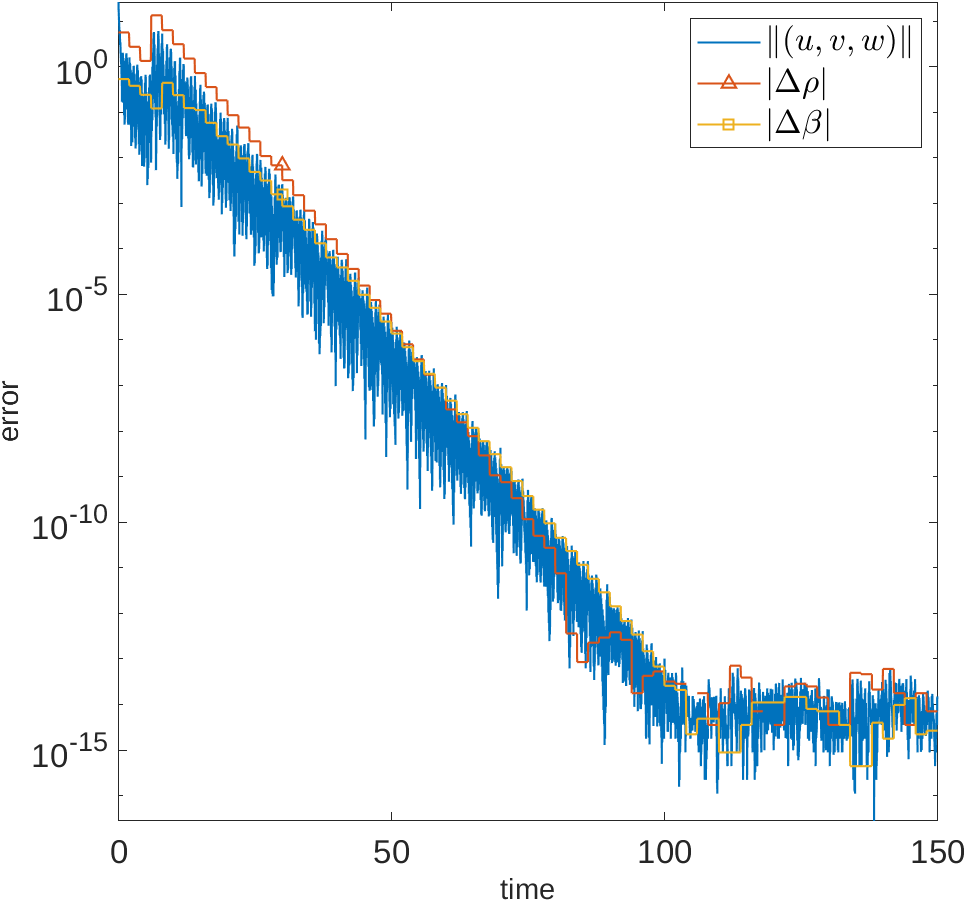}
  \caption{\label{subfig:srb08_011}$\widetilde{\sigma}=\sigma$, $\widetilde{\rho}=0.8\rho$,  $\widetilde{\beta}=0.8\beta$}
\end{subfigure}
\begin{subfigure}{.32\textwidth}
  \centering
  \includegraphics[width=1.0\linewidth,trim = 0mm 0mm 0mm 0mm, clip]{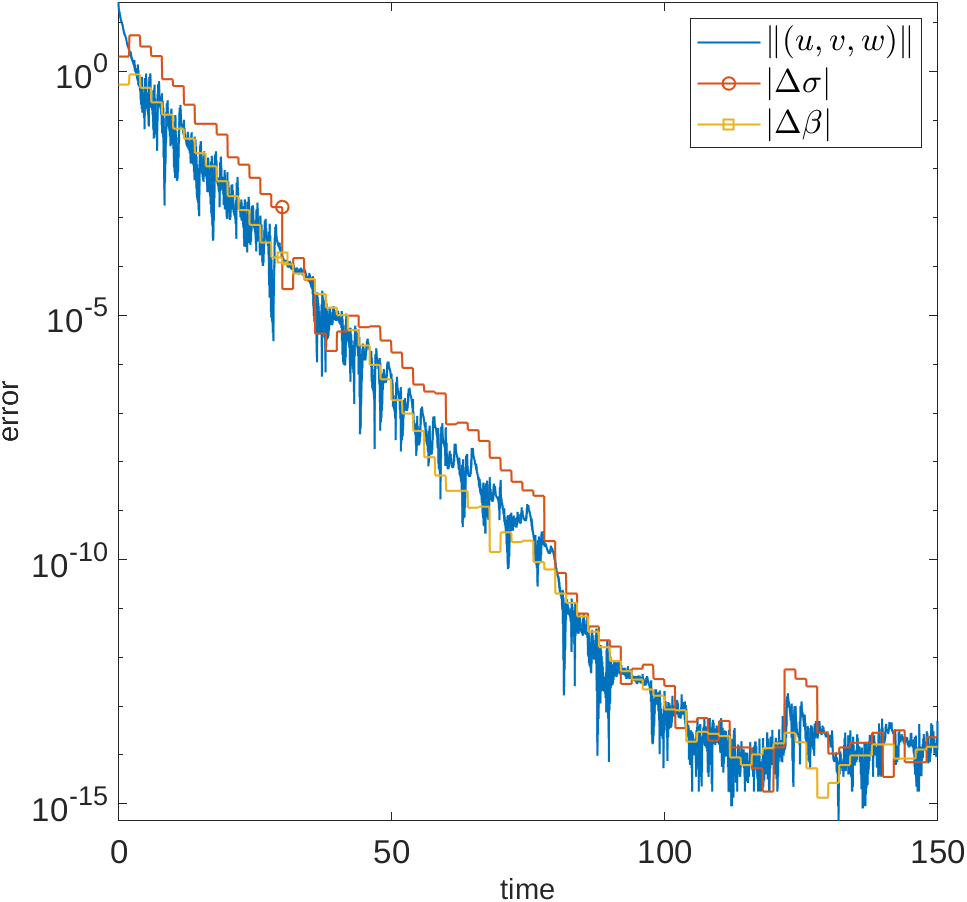}
  \caption{\label{subfig:srb08_101}$\widetilde{\sigma}=0.8\sigma$, $\widetilde{\rho}=\rho$,  $\widetilde{\beta}=0.8\beta$}
\end{subfigure}
\begin{subfigure}{.32\textwidth}
  \centering
  \includegraphics[width=1.0\linewidth,trim = 0mm 0mm 0mm 0mm, clip]{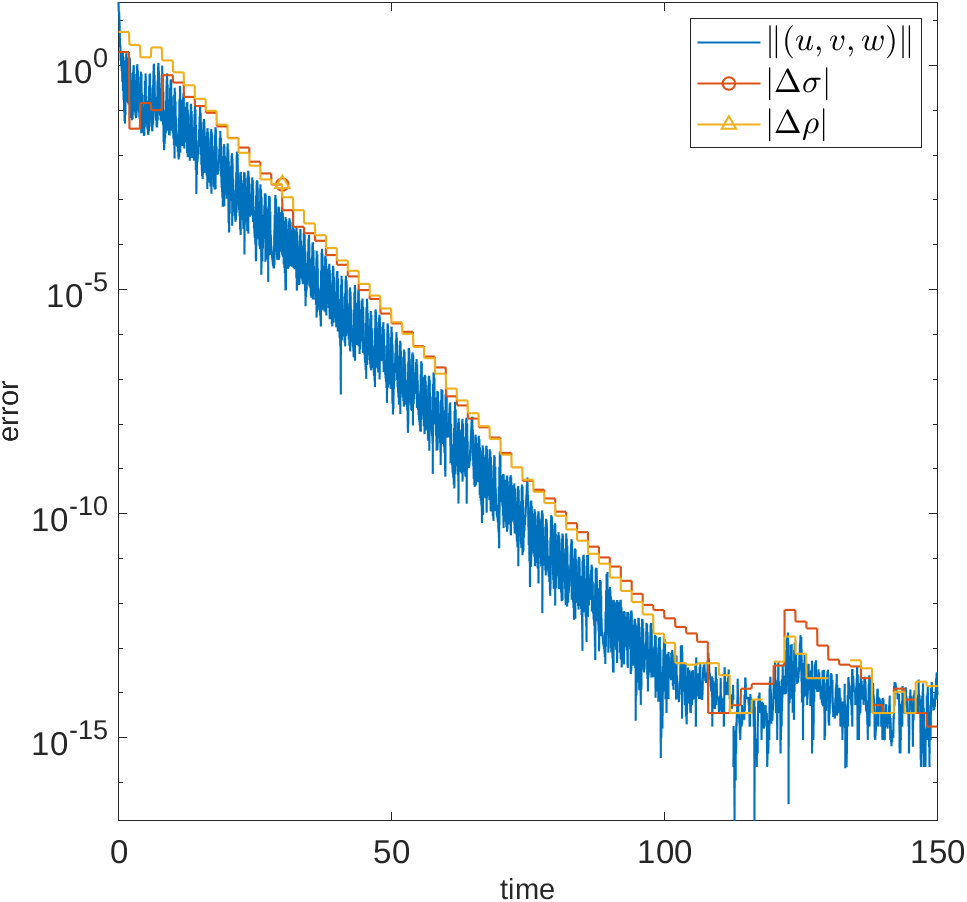}
  \caption{\label{subfig:srb08_110}$\widetilde{\sigma}=0.8\sigma$, $\widetilde{\rho}=0.8\rho$,  $\widetilde{\beta}=\beta$}
\end{subfigure}
\begin{subfigure}{.32\textwidth}
  \centering
  \includegraphics[width=1.0\linewidth,trim = 0mm 0mm 0mm 0mm, clip]{PE0p8_srb111_dtO0p05_dtP2_s10_sd8_r28_rd22p4_b2p6667_bs2p1333_muFac1p8_muP0p0018_delta}
  \caption{\label{subfig:srb08_111}$\widetilde{\sigma}=0.8\sigma$, $\widetilde{\rho}=0.8\rho$,  $\widetilde{\beta}=0.8\beta$}
\end{subfigure}
\caption{\label{fig:srb08}
(Log-linear plots) Parameter recovery with observations every 500 time-steps.  
$\sigma=10$, 
$\rho=28$, 
$\beta=8/3$, , 
$\Delta t=0.0001$,
$\mu^{\text{AOT}}=1.8/\Delta t=1,800$,
$\mu^{\text{param}}=1.8$.
(A) Observations only on $x$,
(B) Observations only on $y$,
(C) Observations only on $y$ and $z$,
(D) Observations only on $x$ and $z$,
(E) Observations only on $x$ and $y$,
(F) Observations on $x$, $y$, and $z$. 
Note: observations only on $z$ with an unknown $\beta$ parameter did not converge and hence are not shown.  In
(A), 
the solution and parameter momentarily converged to the exact value at $t\approx91.2$.
}
\end{figure}

\subsubsection{Additional details of numerical methods}\label{sec:adonm}
For reference, all of the input parameters are given in the MATLAB code in \Cref{sect:matlab}. We describe the basic outline here for clarity. Time-stepping for all algorithms was done via fully explicit Euler time-stepping (see the discussion in \Cref{sec:numerical_methods}).  Parameter updates were done at fixed time intervals, rather than using the analytical conditions provided in the theorems above.  (We regard this as a testament to the robustness of our algorithms; namely, parameter update times do not need to be as precisely determined as our analysis might indicate).  To avoid dividing by zero in the parameter update formulas, a simple \textit{ad-hoc} tolerance value was used; namely, if the denominator was within \texttt{0.0001} of zero, a parameter update would not be performed.  Our results did not seem to depend very strongly on this tolerance value, and nearly identical results were observed even after increasing or decreasing this value by several orders of magnitude.  To initialize our simulations on or near the global attractor of the system, the initial condition in this section are given by the output of the following MATLAB code:
\[
\texttt{ode45(@(t,U)[10*(U(2)-U(1));U(1)*(28-U(3))-U(2);U(1)*U(2)-8/3*U(3)],[0,100],[1,1,1])}
\]
resulting in initial data
\begin{align}\label{eq:ic}
    (x_0,y_0,z_0)=(8.15641407246436,~10.8938717856828,~22.3338694390332).
\end{align}
In our tests, starting with significantly different, randomly generated initial conditions did not yield qualitatively different results; hence we only display results using initial data \eqref{eq:ic}.


\subsection{Multi-parameter recovery with sparse sampling, stochasticity, and noisy observations}\label{sec:mprwssit_noise}
In this section, we demonstrate that our algorithms work with not only unknown parameters and discrete sampling in time, but also with noisy observations and/or stochastic forcing on the underlying equation.  Rigorous analysis of these cases is of fundamental interest, but for brevity is relegated to future work.  The simulations reported in this section are meant to indicate that the parameter learning algorithms developed here are robust to situations of greater physical interest.

For instance, the algorithms are not particularly dependent on having exact observational data, nor continuous-in-time observations, and the underlying model can even have stochastic forcing.  Of course, there is a price to pay for these additional sources of uncertainty.  Exponential convergence rates still occur, but the error reaches a minimum value determined by the magnitude of the noise in the observations (denoted $\eta\geq0$) and the magnitude of the stochastic forcing (denoted $\epsilon\geq0$). The results are shown in \Cref{fig:srb08_noise_part1,fig:srb08_noise_part2}, where observations are sparse-in-time and $\sigma$, $\rho$, and $\beta$ are all unknown. Here, we see that even in the presence of either observational noise (6B) or stochastic forcing (6C), the error still decays exponentially but the minimal error increases.  We then increase both $\epsilon$ and $\eta$ together from $10^{-13}$ (7A) up to $10^{-3}$ (7F) and see that exponential decay persists, but the minimal error correspondingly increases.  It is notable that in \cref{fig:srb08_noise:I}, even with sparse-in-time observations, strong stochastic forcing, strongly noisy observations, and all parameters unknown, our algorithms still converge to within 1--2 decimal places of the right answer.
  
Finally, although we do not show the results here, we note that  longer times between observations were observed to decrease convergence rates as one might expect, although rates remained exponential in all cases we tested until observation times became so separated that no convergence was observed.

\begin{figure}[ht]
\centering
\begin{subfigure}{.32\textwidth}
  \centering
  \includegraphics[width=1.0\linewidth,trim = 0mm 0mm 0mm 0mm, clip]{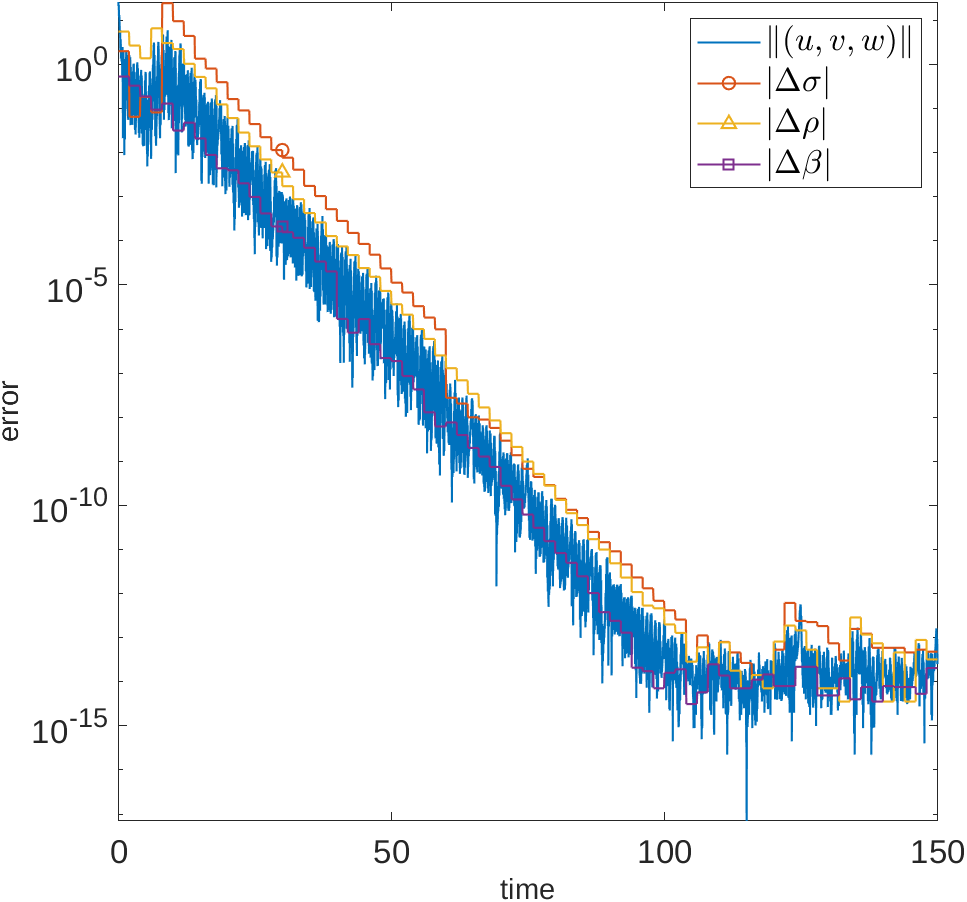}
  \caption{$\epsilon=0$, $\eta=0$ }
  \label{fig:srb08_noise:A}
\end{subfigure}
\begin{subfigure}{.32\textwidth}
  \centering
  \includegraphics[width=1.0\linewidth,trim = 0mm 0mm 0mm 0mm, clip]{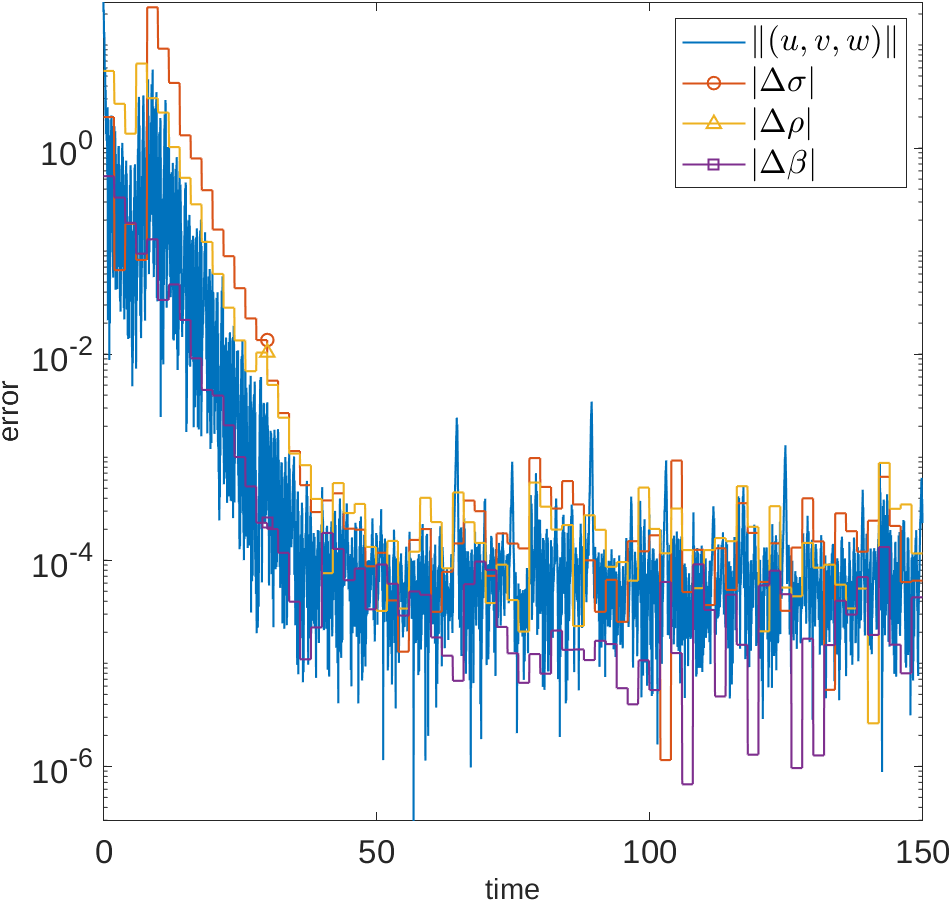}
  \caption{$\epsilon=0$, $\eta=10^{-5}$ }
  \label{fig:srb08_noise:B}
\end{subfigure}
\begin{subfigure}{.32\textwidth}
  \centering
  \includegraphics[width=1.0\linewidth,trim = 0mm 0mm 0mm 0mm, clip]{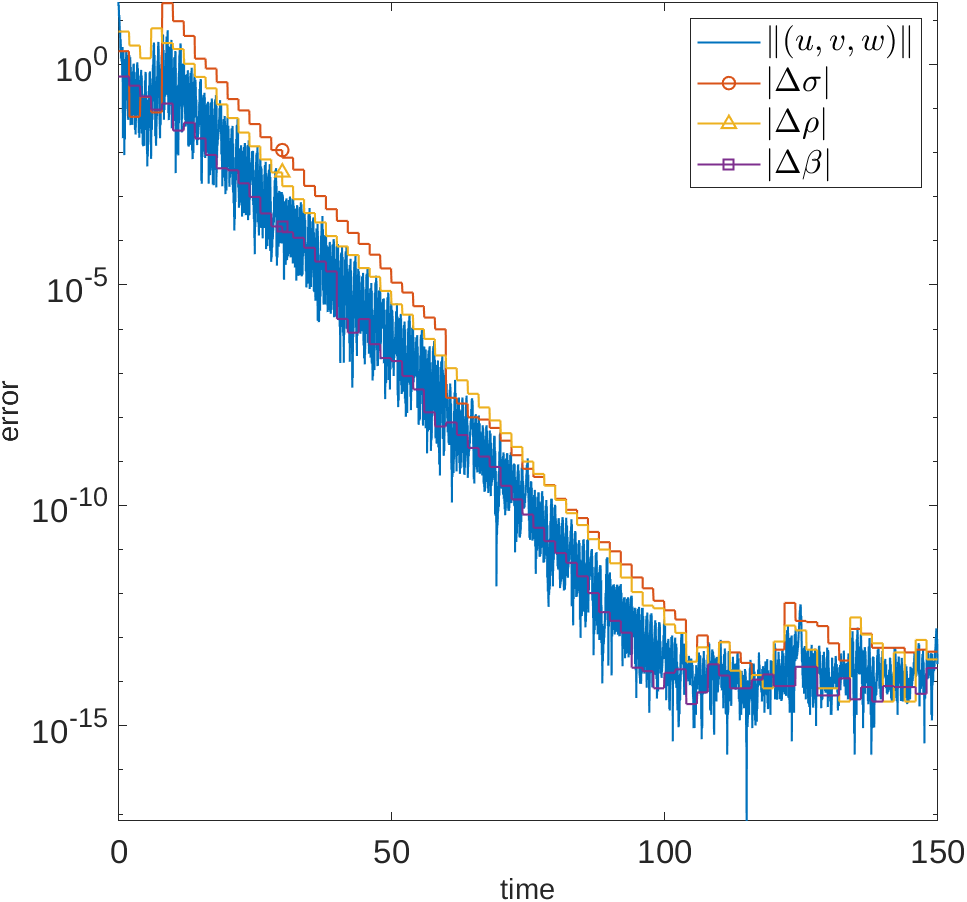}
  \caption{$\epsilon=10^{-5}$, $\eta=0$ }
  \label{fig:srb08_noise:C}
\end{subfigure}
\caption{\label{fig:srb08_noise_part1}
Parameter recovery with observations every 500 time-steps and either stochastic forcing (of amplitude $\epsilon$) or noisy observations (of amplitude $\eta$). The case where neither is present is included for comparison.  
$\sigma=10$, $\widetilde{\sigma}=0.8\sigma$, 
$\rho=28$, $\widetilde{\rho}=0.8\rho$, 
$\beta=8/3$, $\widetilde{\beta}=0.8\beta$,
$\Delta t=0.0001$, 
$\mu^{\text{AOT}}=1.8/\Delta t$,
$\mu^{\text{param}}=1.8$.
}
\end{figure}

\begin{figure}
\begin{subfigure}{.32\textwidth}
  \centering
  \includegraphics[width=1.0\linewidth,trim = 0mm 0mm 0mm 0mm, clip]{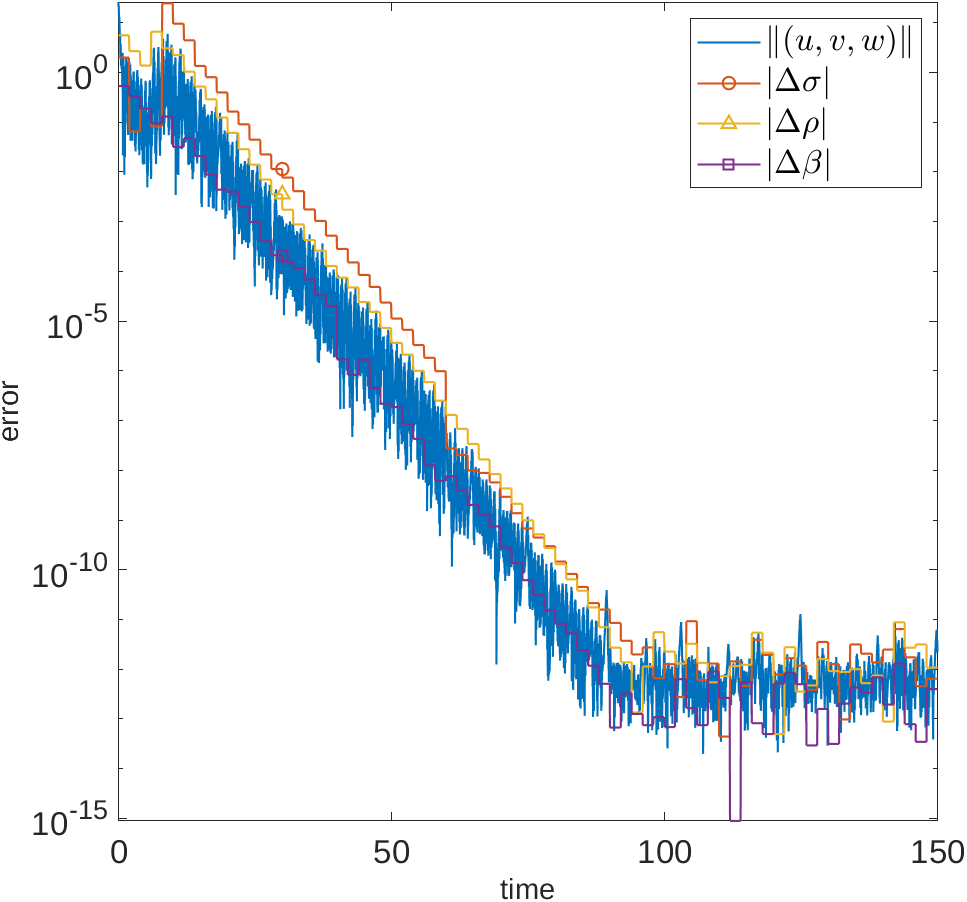}
  \caption{$\epsilon=10^{-13}$, $\eta=10^{-13}$ }
  \label{fig:srb08_noise:D}
\end{subfigure}
\begin{subfigure}{.32\textwidth}
  \centering
  \includegraphics[width=1.0\linewidth,trim = 0mm 0mm 0mm 0mm, clip]{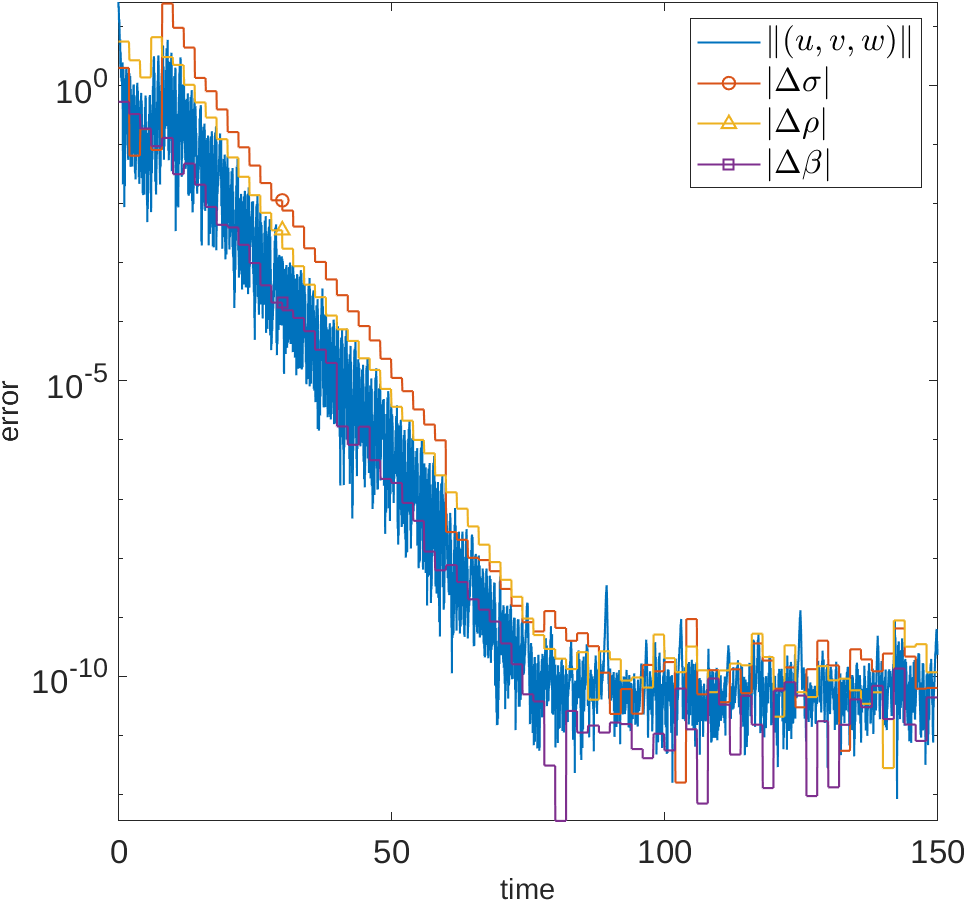}
  \caption{$\epsilon=10^{-11}$, $\eta=10^{-11}$ }
  \label{fig:srb08_noise:E}
\end{subfigure}
\begin{subfigure}{.32\textwidth}
  \centering
  \includegraphics[width=1.0\linewidth,trim = 0mm 0mm 0mm 0mm, clip]{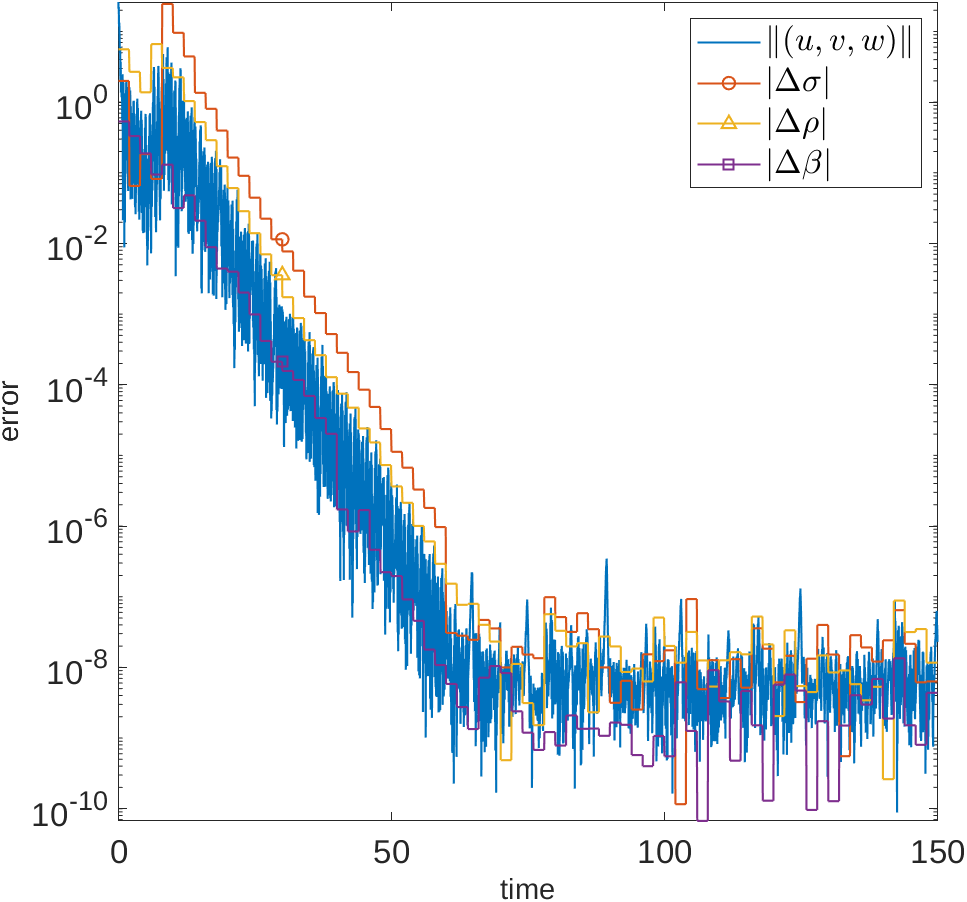}
  \caption{$\epsilon=10^{-9}$, $\eta=10^{-9}$ }
  \label{fig:srb08_noise:F}
\end{subfigure}
\begin{subfigure}{.32\textwidth}
  \centering
  \includegraphics[width=1.0\linewidth,trim = 0mm 0mm 0mm 0mm, clip]{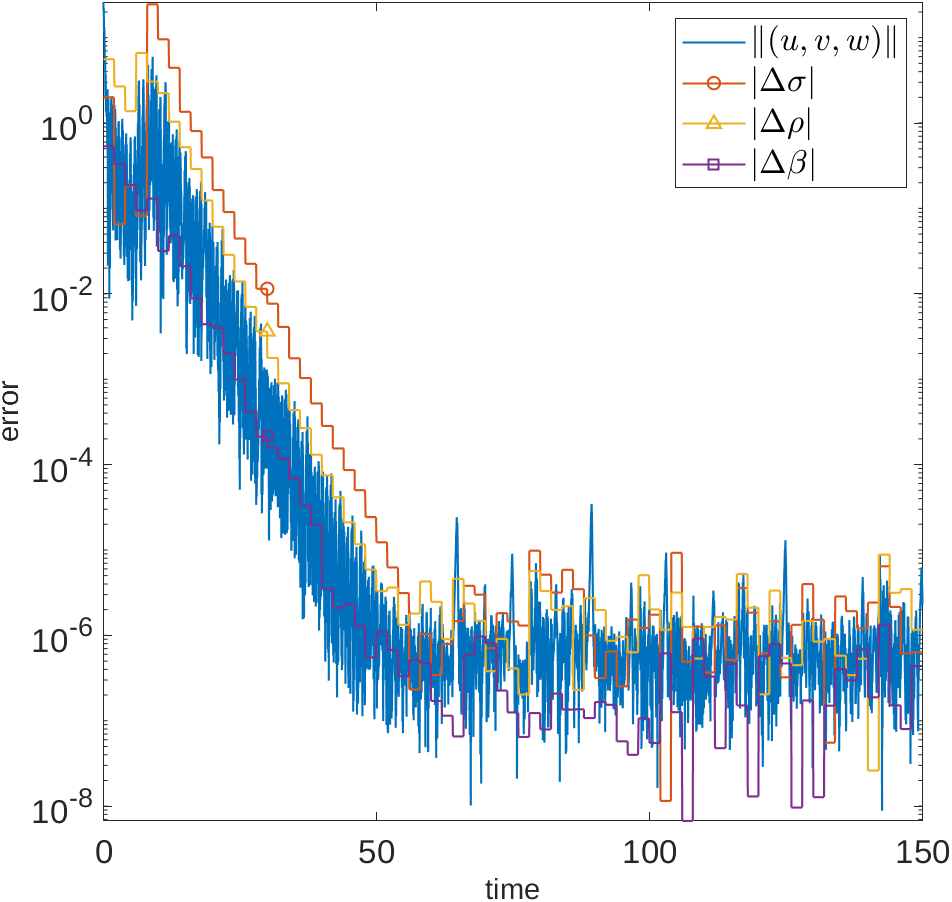}
  \caption{$\epsilon=10^{-7}$, $\eta=10^{-7}$ }
  \label{fig:srb08_noise:G}
\end{subfigure}
\begin{subfigure}{.32\textwidth}
  \centering
  \includegraphics[width=1.0\linewidth,trim = 0mm 0mm 0mm 0mm, clip]{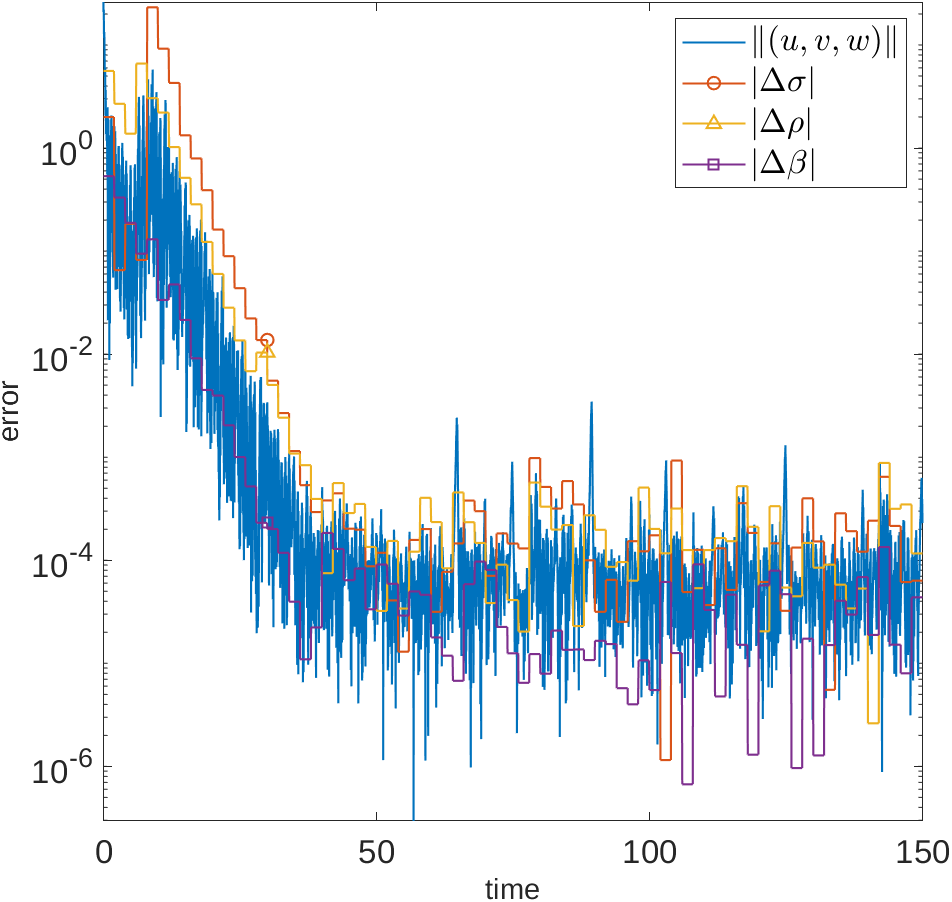}
  \caption{$\epsilon=10^{-5}$, $\eta=10^{-5}$ }
  \label{fig:srb08_noise:H}
\end{subfigure}
\begin{subfigure}{.32\textwidth}
  \centering
  \includegraphics[width=1.0\linewidth,trim = 0mm 0mm 0mm 0mm, clip]{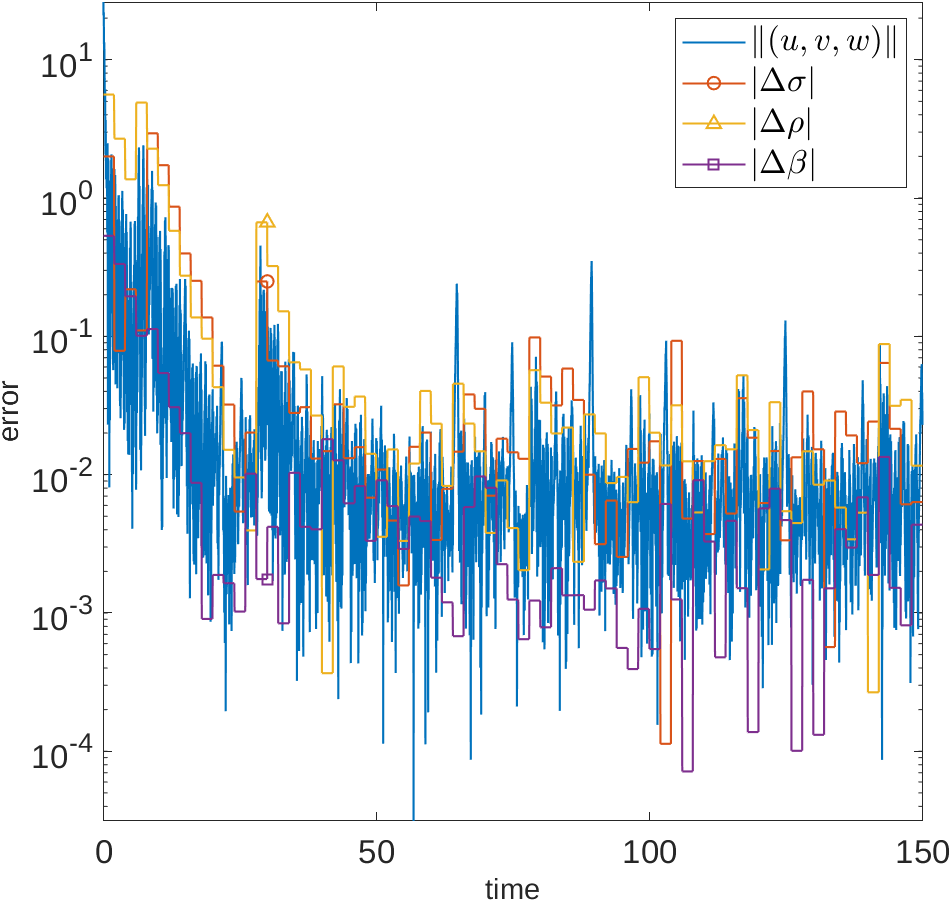}
  \caption{$\epsilon=10^{-3}$, $\eta=10^{-3}$ }
  \label{fig:srb08_noise:I}
\end{subfigure}
\caption{
\label{fig:srb08_noise_part2}Parameter recovery with observations every 500 time-steps and both stochastic forcing (of amplitude $\epsilon$) or noisy observations (of amplitude $\eta$). The initial estimates and algorithm parameters are the same as in \Cref{fig:srb08_noise_part1}.}
\end{figure}

\subsubsection{Additional details of numerical methods} 
Except for the stochastic forcing and noisy observations, the computational setup was identical to that described in \Cref{sec:adonm}.
Again, details can be seen in the MATLAB code in \Cref{sect:matlab}, but we describe the basic implementation here for clarity. 
To implement stochastic forcing of amplitude $\epsilon>0$, a standard white-noise term, i.e., independent Gaussian-normal distributions with mean zero and standard deviation $\epsilon\sqrt{\Delta t}$, were added to the right-hand side of the Lorenz system at each time-step (with seed initialized via \texttt{rng(0)} before the time loop). 
To simulate noisy measurements, the observations of the Lorenz system had Gaussian noise of mean zero and standard deviation $\eta$ added to them before they were used to update/nudge the assimilated solution.

\FloatBarrier
\clearpage

\section{Proofs of main results}\label{sect:proofs}
We now prove the convergence of the parameter learning algorithm under the conditions that were precisely stated in \cref{thm:convergence:x} and \cref{thm:convergence:xy}. To do so, we must first develop a priori estimates, which we do in \cref{sect:apriori}. The proofs of the main theorems are then provided in \cref{sect:proofs:details}.
 
\subsection{A priori estimates}\label{sect:apriori}
First, let us recall the notation \eqref{def:diff}, \eqref{def:par:err}. For our analysis, it will be useful to rewrite \eqref{eq:diff_pre} as
\begin{empheq}[left=\empheqlbrace]{align}\label{eq:diff}\begin{split}
        \dot{u} &=  (y-x)\Delta\sigma+\st v-(\mu_1+\til{\s}) u\\
        \dot{v} &= x\De\rho +\rt u -uw-uz-xw-(1+\mu_2)v \\
        \dot{w} &= -z\De\be  +uv+uy+xv-(\mu_3+\bt)w.
\end{split}\end{empheq}

We then define
\begin{align}\label{def:K:funct}
    \mathcal{K}(u,v,w):= \frac{1}{2}(u^2 + v^2 + w^2),
\end{align}
and 
\begin{align}
\mu&:=\min\left\{\mu_1+\st,\mu_2+1,\mu_3+\bt\right\}\label{def:mu}.
\end{align}
We will first establish the following estimate on $(u,v,w)$.

\begin{Prop}\label{prop:diff:energy}
Let $(x_0,y_0,z_0-\rho-\s)\in\mathscr{B}$, $\mu_1,\mu_2,\mu_3\geq0$, and $\st,\rt,\bt\geq0$. Let $\mu$ be given by \eqref{def:mu}. Suppose that $\mu_1,\mu_2$ satisfy 
\begin{align}\label{cond:mu:diff}
      \mu_1+\st\geq 16\left[\frac{(\De\s)^2+3(\s+\rho)^2+2R^2}{\mu_2+1}+\frac{R^2}{2(\mu_3+\bt)}\right],
    \quad 
      \mu_2+1\geq 4\frac{(\De\rho)^2}{\mu_1+\st}.
\end{align}
Then for $0\leq t_0\leq t$, we have
\begin{align}
    \mathcal{K}(t) & \leq \mathcal{K}(t_0)e^{-\frac{\mu}2(t-t_0)}  +K^2(1-e^{-\frac{\mu}2(t-t_0)}),\notag
\end{align}
where
  \begin{align}
        K^2&:=\frac{2}{\mu}\left\{\frac{2R^2}{\mu_1+\st}(\De\s)^2
    +\frac{4R^2}{\mu_2+1}(\De\rho)^2 
    + \frac{4\left[R^2+(\rho+\s)^2\right]}{\mu_3+\bt}(\De\be)^2\right\}\label{def:K}.
    \end{align}
In particular, if $t\geq t_0+\frac{2}{\mu}\log\left(\mathcal{K}(t_0)/K^2\right)$, then $\mathcal{K}(t)\leq 2K^2$.
\end{Prop}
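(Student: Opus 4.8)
The plan is to run a single (unweighted) energy estimate for the variable-coefficient linear system \eqref{eq:diff}. First I would differentiate $\mathcal{K}$ along a solution and substitute \eqref{eq:diff}, obtaining
\[
\frac{d}{dt}\mathcal{K} = u\dot u + v\dot v + w\dot w = -\big[(\mu_1+\st)u^2+(1+\mu_2)v^2+(\mu_3+\bt)w^2\big] + (\text{cross terms}) + (\text{forcing}).
\]
The crucial algebraic point, to be checked at the outset, is that the cubic terms $\pm uvw$ cancel and the $\pm xvw$ terms cancel, so that the only surviving cross terms are one proportional to $uv$, with coefficient $\st+\rt-z$, and one proportional to $uw$, with coefficient $y$; the forcing consists of $(y-x)\De\s\,u$, $x\De\rho\,v$ and $-z\De\be\,w$. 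By the definition \eqref{def:mu}, the dissipative part is $\le -2\mu\,\mathcal{K}$.

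Next I would bring in \cref{thm:Lor:ab}: since the initial datum lies in the absorbing ball $\mathscr{B}$, the trajectory $(x,y,z-\rho-\s)$ stays in $\mathscr{B}$, so $x(t)^2+y(t)^2\le R^2$ and $(z(t)-\rho-\s)^2\le R^2$ for all $t\ge 0$; in particular $(y-x)^2\le 2R^2$, $x^2\le R^2$, $y^2\le R^2$, and $z^2\le 2R^2+2(\rho+\s)^2$. For the $uv$ cross term I would write $\st=\s+\De\s$, $\rt=\rho+\De\rho$ and decompose $(\st+\rt-z)uv$ into the pieces $(\s+\rho)uv$, $-z\,uv$, $\De\s\,uv$ and $\De\rho\,uv$; the reason for decomposing before estimating is that $\st$ and $\rt$ are assumed only nonnegative, whereas each of these four pieces carries a coefficient controlled purely by the true parameters, by $R$, or by a parameter error. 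Then I would apply Young's inequality to every cross and forcing term, allocating fixed fractions of the available dissipation $(\mu_1+\st)u^2+(1+\mu_2)v^2+(\mu_3+\bt)w^2$ so that all quadratic remainders are absorbed into it. This is exactly where \eqref{cond:mu:diff} enters: the second inequality there, $\mu_2+1\ge 4(\De\rho)^2/(\mu_1+\st)$, closes the $\De\rho\,uv$ piece, while the first inequality — which encodes $(\mu_1+\st)(\mu_2+1)\gtrsim (\De\s)^2+3(\s+\rho)^2+2R^2$ together with $(\mu_1+\st)(\mu_3+\bt)\gtrsim R^2$ — closes the $(\s+\rho)uv$, $-z\,uv$, $\De\s\,uv$ and $y\,uw$ pieces. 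After using $(y-x)^2\le 2R^2$, $x^2\le R^2$ and $z^2\le 2R^2+2(\rho+\s)^2$ on the three forcing terms, the leftover constants assemble precisely into $\frac{\mu}{2}K^2$ with $K^2$ as in \eqref{def:K}.

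This produces the differential inequality
\[
\frac{d}{dt}\mathcal{K}\le -\frac{\mu}{2}\mathcal{K} + \frac{\mu}{2}K^2,\qquad\text{i.e.}\qquad \frac{d}{dt}\big(\mathcal{K}-K^2\big)\le -\frac{\mu}{2}\big(\mathcal{K}-K^2\big),
\]
and integrating from $t_0$ to $t$ gives $\mathcal{K}(t)\le \mathcal{K}(t_0)e^{-\frac{\mu}{2}(t-t_0)}+K^2\big(1-e^{-\frac{\mu}{2}(t-t_0)}\big)$. For the final assertion, observe that the right-hand side is a convex combination of $\mathcal{K}(t_0)$ and $K^2$: if $\mathcal{K}(t_0)\le K^2$ it is already $\le K^2\le 2K^2$, and if $\mathcal{K}(t_0)>K^2$ then $t\ge t_0+\frac{2}{\mu}\log(\mathcal{K}(t_0)/K^2)$ forces $e^{-\frac{\mu}{2}(t-t_0)}\le K^2/\mathcal{K}(t_0)$, whence $\mathcal{K}(t)\le K^2+K^2=2K^2$ (and the bound only improves for larger $t$).

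I expect the main obstacle to be the Young's-inequality bookkeeping in the middle step: one must distribute the limited dissipation so that \emph{every} cross and forcing term is dominated using only what \eqref{cond:mu:diff} provides, while ensuring that none of the absorptions secretly uses a bound on $\st$, $\rt$ or $\bt$ beyond nonnegativity — this is precisely why the $uv$ coefficient has to be rewritten as $\st+\rt-z=\De\s+\De\rho+(\s+\rho-z)$ before any estimate is made, and it accounts for the particular constants appearing in \eqref{cond:mu:diff} and \eqref{def:K}. The cancellation of the cubic and $xvw$ terms is essential (no finite choice of the $\mu_i$ would otherwise suffice) and should be confirmed first; once the differential inequality is in hand, the remainder is a routine Grönwall argument.
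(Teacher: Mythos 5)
Your proposal is correct and follows essentially the same route as the paper's proof: an energy estimate for $\mathcal{K}$ along \eqref{eq:diff}, cancellation of the $uvw$ and $xvw$ terms, decomposition of $\st,\rt$ into true parameters plus errors, Young's inequality with the absorbing-ball bounds so that \eqref{cond:mu:diff} absorbs the quadratic remainders into the dissipation, and then Gr\"onwall. Your closing convex-combination argument for the final assertion (which the paper leaves implicit) is also correct.
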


We immediately deduce the following in the special case $\rt=\rho$, $\bt=\beta$, and $\mu_2=\mu_3=0$.

\begin{Cor}\label{cor:diff:energy}
Let $(x_0,y_0,z_0-\rho-\s)\in\mathscr{B}$ and $\mu$ be given by \eqref{def:mu}. Suppose  $\rt=\rho$, $\bt=\beta$, $\mu_2=0$,$\mu_3=0$,and that $\mu_1 \geq0$ satisfies \eqref{cond:mu:diff}. Then
\begin{align}
    \mathcal{K}(t) & \leq \mathcal{K}(t_0)e^{-\frac{\mu}2(t-t_0)}  +\frac{c_1}{\mu_1+\st}(\De\s)^2,\notag
\end{align}
for some constant $c_1>0$, depending only on $R,\rho,\s,\be$.
\end{Cor}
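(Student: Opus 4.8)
The plan is to obtain the corollary as an immediate specialization of Proposition~\ref{prop:diff:energy}. First I would record that the hypotheses $\rt=\rho$ and $\bt=\be$ mean $\Delta\s$ may be nonzero but $\Delta\rho=0$ and $\Delta\be=0$, so the second inequality in \eqref{cond:mu:diff} reads $\mu_2+1\ge 0$ and holds automatically; together with $\mu_2=\mu_3=0$ this places us inside the hypotheses of Proposition~\ref{prop:diff:energy}. Substituting $\Delta\rho=\Delta\be=0$ into the definition \eqref{def:K} of $K^2$ then annihilates the last two summands, leaving $K^2=\tfrac{2}{\mu}\cdot\tfrac{2R^2}{\mu_1+\st}(\Delta\s)^2=\tfrac{4R^2}{\mu(\mu_1+\st)}(\Delta\s)^2$.

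Next I would plug this into the conclusion of Proposition~\ref{prop:diff:energy} and use the crude bound $1-e^{-\frac{\mu}{2}(t-t_0)}\le 1$ on the $K^2$ term, which yields
\[
  \mathcal{K}(t)\ \le\ \mathcal{K}(t_0)e^{-\frac{\mu}{2}(t-t_0)}+\frac{4R^2}{\mu(\mu_1+\st)}(\Delta\s)^2 .
\]
To finish I must replace the prefactor $4R^2/\mu$ by a constant depending only on $R,\rho,\s,\be$, i.e.\ independent of $\mu_1$ and $\st$. Here I would invoke the hypothesis \eqref{cond:mu:diff}: in the present setting $\mu_2+1=1$ and $\mu_3+\bt=\be$, so its first inequality forces $\mu_1+\st\ge 16\bigl[3(\s+\rho)^2+2R^2+\tfrac{R^2}{2\be}\bigr]$ after discarding the nonnegative term $16(\Delta\s)^2$. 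Since \eqref{def:mu} gives $\mu=\min\{\mu_1+\st,1,\be\}$ in this case, this bound yields $\mu\ge\mu_\ast:=\min\bigl\{16[3(\s+\rho)^2+2R^2+\tfrac{R^2}{2\be}],\,1,\,\be\bigr\}>0$, a quantity depending only on $R,\rho,\s,\be$; then $c_1:=4R^2/\mu_\ast$ does the job.

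The only step that is not pure substitution — and hence the one to watch — is this last one: because $\mu$ is defined in \eqref{def:mu} as a minimum that itself contains $\mu_1+\st$, one cannot simply declare $1/\mu$ bounded, and it is essential to use the structural lower bound \eqref{cond:mu:diff} on $\mu_1+\st$ to pin $\mu$ away from zero uniformly in $\mu_1$. Everything else (the two substitutions and the $1-e^{-x}\le 1$ estimate) is routine, so the corollary should follow in a few lines.
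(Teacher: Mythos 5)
Your argument is correct and follows the route the paper intends: the corollary is stated as an immediate specialization of Proposition \ref{prop:diff:energy} (no separate proof is given), obtained exactly as you do by setting $\Delta\rho=\Delta\beta=0$ in \eqref{def:K}, bounding $1-e^{-\frac{\mu}{2}(t-t_0)}\le 1$, and using the first inequality of \eqref{cond:mu:diff} to bound $\mu=\min\{\mu_1+\st,1,\be\}$ below by a constant depending only on $R,\rho,\s,\be$, so that $c_1$ is independent of $\mu_1$ and $\st$. Your flagging of the $1/\mu$ step is exactly the one non-trivial point, and you handle it correctly.
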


\begin{proof}[Proof of \cref{prop:diff:energy}]
We calculate
\begin{align}\label{eq:Kdot}
    \dot{\mathcal{K}}
    =& uv\De\s+uv\De\rho+\s uv+\rho uv-zuv+yuw\notag\\
    & +(y-x)u\De\s+xv\De\rho-zw\De\be-(\mu_1+\st)u^2-(1+\mu_2)v^2-(\mu_3+\bt)w^2\notag\\
    =&K_1+K_2+\dots+K_{10}+K_{11}+K_{12}.
\end{align}
We treat $K_1$--$K_9$ with Young's inequality and the absorbing ball bounds for \eqref{eq:Lor} to estimate
\begin{align}
   K_1 &\leq \frac{4}{\mu_2+1}(\De\s)^2u^2+\frac{\mu_2+1}{16}v^2,
   \quad K_2\leq  \frac{\mu_1+\st}4u^2+\frac{(\De\rho)^2}{\mu_1+\st}v^2,
   \quad K_3\leq \frac{4\s^2}{\mu_2+1}u^2+\frac{\mu_2+1}{16}v^2,\notag\\
    K_4&\leq \frac{4\rho^2}{\mu_2+1}u^2+\frac{\mu_2+1}{16}v^2,
    \quad K_5\leq \frac{8\left[R^2+(\rho+\s)^2\right]}{\mu_2+1}u^2+\frac{\mu_2+1}{16}v^2,
    \quad K_6\leq \frac{2R^2}{\mu_3+\bt}u^2+\frac{\mu_3+\bt}8w^2,\notag
\end{align}
and
\begin{align}
 K_7 &\leq \frac{(y-x)^2}{\mu_1+\st}(\De\s)^2+\frac{\mu_1+\st}4u^2\leq \frac{2R^2}{\mu_1+\st}(\De\s)^2+\frac{\mu_1+\st}4u^2,\notag \\
    K_8 &\leq \frac{4x^2}{\mu_2+1}(\De\rho)^2+\frac{\mu_2+1}{16}v^2\leq \frac{4R^2}{\mu_2+1}(\De\rho)^2+\frac{\mu_2+1}{16}v^2, \notag \\
    K_9 &=(z-\rho-\s)w\De\be+(\rho+\s)w\De\be\leq 4\left[(z-\rho-\s)^2+(\rho+\s)^2\right]\frac{(\De\be)^2}{\mu_3+\bt}+ \frac{\mu_3+\bt}8w^2\notag\\
    &\leq 4\left[R^2+(\rho+\s)^2\right]\frac{(\De\be)^2}{\mu_3+\bt}+\frac{\mu_3+\bt}8w^2.\notag
\end{align} 

Upon returning to \eqref{eq:Kdot} and applying the estimates above, we obtain
\begin{align*}
    \dot{\mathcal{K}} \leq& -\frac{1}2\left\{\mu_1+\st-8\left[\frac{(\De\s)^2+3(\s+\rho)^2+2R^2}{\mu_2+1}+\frac{R^2}{2(\mu_3+\bt)}\right]\right\}u^2\notag\\
    &-\frac{1}2\left[\mu_2+1-\frac{2(\De\rho)^2}{\mu_1+\st}\right]v^2
    -\frac{\mu_3+\bt}2w^2 \notag
    \\
    &+\frac{2R^2}{\mu_1+\st}(\De\s)^2
    +\frac{4R^2}{\mu_2+1}(\De\rho)^2 
    + \frac{4\left[R^2+(\rho+\s)^2\right]}{\mu_3+\bt}(\De\be)^2.
\end{align*}
By \eqref{cond:mu:diff}, it follows that
\begin{align}
    \dot{\mathcal{K}} &\leq -\frac{\mu}2\mathcal{K}
    +\frac{2R^2}{\mu_1+\st}(\De\s)^2
    +\frac{4R^2}{\mu_2+1}(\De\rho)^2 
    + \frac{4\left[R^2+(\rho+\s)^2\right]}{\mu_3+\bt}(\De\be)^2.\notag
\end{align}
Hence, Gr\"onwall's inequality yields
\begin{align}
    \mathcal{K}(t) &\leq e^{-\frac{\mu}2(t-t_0)}\mathcal{K}(t_0)+\frac{2}{\mu}\left\{\frac{2R^2}{\mu_1+\st}(\De\s)^2
    +\frac{4R^2}{\mu_2+1}(\De\rho)^2 
    + \frac{4\left[R^2+(\rho+\s)^2\right]}{\mu_3+\bt}(\De\be)^2\right\}(1-e^{-\frac{\mu}2(t-t_0)}),\notag
\end{align}
as desired.
\end{proof}

Now let us denote the derivatives of the differences in \eqref{eq:diff} by
\begin{align}\label{def:diff:dot}
    \gamma := \dot{u}, \quad \delta := \dot{v}, \quad \eta := \dot{w}.
\end{align}
The evolution of $(\gam,\de,\eta)$ is governed by
\begin{empheq}[left=\empheqlbrace]{align}\label{eq:diff:dot}\begin{split}
            \dot{\gam}&=\st\de+(\dot{y}-\dot{x})\De\s-(\mu_1+\st)\gam\\
            \dot{\de}&=\rt\gam+\dot{x}\De\rho-\gam w-u\eta-\gam z-\dot{z}u-\dot{x}w-x\eta-(\mu_2+1)\de\\
            \dot{\eta}&=-\dot{z}\De\be+\gam v+u\de+\gam y+\dot{y}u+\dot{x}v+x\de-(\mu_3+\bt)\eta.
\end{split}\end{empheq}
Similar to \eqref{def:K:funct}, we consider the functional
\begin{align}\label{def:L:funct}
    \mathcal{L}(t) := \frac{1}{2}(\gamma^2+\delta^2+\eta^2).
\end{align}
We prove the following.

\begin{Prop}\label{prop:diff:dot:energy}
 Let $(x_0,y_0,z_0-\rho-\s)\in\mathscr{B}$. Suppose that $\mu_1,\mu_2,\mu_3\geq0$ satisfy \eqref{cond:mu:diff}. Suppose, moreover, that $\mu_1$ satisfies
    \begin{align}\label{cond:mu:diff:dot}
        \mu_1+\st\geq\frac{32\left[(\De\s)^2 + (\De\rho)^2 +R^2+2(\rho+\s)^2\right]}{\mu_2+1}+64\left[\frac{1}{\mu_2+1}+\frac{1}{\mu_3+\bt}\right]K^2+\frac{16R^2}{\mu_3+\bt},
    \end{align}
where $K$ is given by \eqref{def:K}. Then given $t_0>0$
  \begin{align}
      \mathcal{L}(t) &\leq e^{-\frac{\mu}2(t-t_0)}\mathcal{L}(t_0)+L^2,\notag
  \end{align}
provided that $t\geq t_0+\frac{2}{\mu}\log\left(\mathcal{K}(t_0)/K^2\right)$, where $\mu$ is given by \eqref{def:mu}, and $L$ is defined by
    \begin{align}\label{def:L}
        L^2:=&\frac{128}{\mu}\left\{\frac{R^4+(\be^2+\s^2)R^2+\be^2(\rho+\s)^2}{\mu_2+1}+\frac{3R^2\left[R^2+2(\rho+\s)^2+1\right]}{\mu_3+\bt}\right\}K^2\\
    &+\frac{2}{\mu}\left\{\frac{24R^2\left[R^2+(\rho+\s)^2+1\right]}{\mu_1+\st}(\De\s)^2+\frac{8\s^2R^2}{\mu_2+1}(\De\rho)^2+\frac{16\left[R^4+\be^2R^2+\be^2(\rho+\s)^2\right]}{\mu_3+\bt}(\De\be)^2\right\}.\notag
    \end{align}
In particular, if $t$ additionally satisfies $t\geq t_0+\frac{2}{\mu}\log(\mathcal{L}(t_0)/L^2)$, then $\mathcal{L}(t)\leq 2L^2$.
\end{Prop}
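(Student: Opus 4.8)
The plan is to mirror the energy argument of \cref{prop:diff:energy}, but now applied to the derivative system \eqref{eq:diff:dot} and the functional $\mathcal{L}$. First I would compute $\dot{\mathcal{L}} = \gam\dot{\gam}+\de\dot{\de}+\eta\dot{\eta}$ by multiplying the three equations in \eqref{eq:diff:dot} by $\gam,\de,\eta$ respectively and summing. This produces a collection of terms that fall into three types: (i) the negative definite dissipation terms $-(\mu_1+\st)\gam^2 - (\mu_2+1)\de^2 - (\mu_3+\bt)\eta^2$; (ii) terms linear in the parameter errors $\De\s,\De\rho,\De\be$, each multiplied by a time-derivative of a state variable ($\dot x,\dot y,\dot z$) and one of $\gam,\de,\eta$; and (iii) purely quadratic cross-terms in $(\gam,\de,\eta)$ and $(u,v,w)$, such as $\gam w\de$, $u\eta\de$, $\gam v\eta$, and the bilinear terms $\s\gam\de$, $\rho\gam\de$, $z\gam\de$, $x\eta\de$, etc.

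The key inputs that make this work are: the absorbing-ball bounds from \cref{thm:Lor:ab} (which control $|x|,|y|,|z-\rho-\s|\le R$, hence $|x|,|y|,|z|\lesssim R+\rho+\s$); the fact that on the absorbing ball $\dot x,\dot y,\dot z$ are likewise bounded by a polynomial in $R,\s,\rho,\be$ via the right-hand side of \eqref{eq:Lor}; and crucially the bound $\mathcal{K}(t)\le 2K^2$ supplied by \cref{prop:diff:energy}, valid precisely once $t\ge t_0+\frac{2}{\mu}\log(\mathcal{K}(t_0)/K^2)$ — this is why that hypothesis appears in the statement. I would then estimate each term of type (ii) and (iii) by Young's inequality, splitting so that the ``bad'' factor of $\gam^2$, $\de^2$, or $\eta^2$ gets a small coefficient (a fraction of $\mu_1+\st$, $\mu_2+1$, or $\mu_3+\bt$) to be absorbed into the dissipation, while the remaining factor is either a constant (for type (ii), absorbed into the forcing $L^2$) or is $u^2,v^2,w^2$ and hence bounded by $\le 4K^2$ on the relevant time interval (for type (iii), also feeding into the forcing). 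The quadratic condition \eqref{cond:mu:diff:dot} on $\mu_1+\st$ is exactly what is needed so that, after all absorptions, the coefficient of $\gam^2$ remains at least $\tfrac14(\mu_1+\st)$ while those of $\de^2,\eta^2$ stay above $\tfrac12(\mu_2+1)$, $\tfrac12(\mu_3+\bt)$; this yields the differential inequality $\dot{\mathcal{L}}\le -\tfrac{\mu}{2}\mathcal{L} + (\text{forcing})$, and a Gr\"onwall argument (identical in form to the end of the proof of \cref{prop:diff:energy}) gives $\mathcal{L}(t)\le e^{-\frac{\mu}{2}(t-t_0)}\mathcal{L}(t_0)+L^2$ with $L^2$ as in \eqref{def:L}. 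The final ``In particular'' clause is then immediate: once $e^{-\frac{\mu}{2}(t-t_0)}\mathcal{L}(t_0)\le L^2$, i.e. $t\ge t_0+\frac{2}{\mu}\log(\mathcal{L}(t_0)/L^2)$, we get $\mathcal{L}(t)\le 2L^2$.

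The main obstacle is purely bookkeeping rather than conceptual: \eqref{eq:diff:dot} has roughly sixteen terms on its right-hand side (versus the handful in \eqref{eq:diff_pre}), and each must be Young-split with the right apportionment of the three dissipation budgets so that the explicit constants $32$, $64$, $16$ in \eqref{cond:mu:diff:dot} and the somewhat intricate coefficients in \eqref{def:L} come out correctly. Particular care is needed with the trilinear terms like $u\eta\de$ and $\gam w\de$, where one factor is $O(K)$ and must be paired so that it lands in the forcing via $\mathcal{K}\le 2K^2$ rather than corrupting a dissipation coefficient; and with the terms $\dot z\,u\,\de$ and $\dot y\,u\,\eta$, which carry both a $u$ (hence an $O(K)$) and a $\dot x,\dot y,\dot z$ (hence an $O(R^2+(\rho+\s)^2)$ after bounding the Lorenz vector field), explaining the mixed $R^2 K^2$ structure of the first brace in \eqref{def:L}. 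Organizing the estimate so it also transparently specializes to the other parameter-subset cases (per \cref{rmk:multi:proof}) is the only reason to be fussy about which $\mu_i$ each term is charged to.
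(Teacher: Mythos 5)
Your proposal is correct and follows essentially the same route as the paper's proof: an energy estimate for $\mathcal{L}$ using the absorbing-ball bounds, the bounds on $\dot{x},\dot{y},\dot{z}$, the bound $\mathcal{K}(t)\leq 2K^2$ from \cref{prop:diff:energy}, Young splitting of the sixteen terms so that \eqref{cond:mu:diff:dot} lets the dissipation absorb the quadratic contributions, and Gr\"onwall's inequality. The bookkeeping you describe (charging each trilinear term to the appropriate $\mu_i$ budget, with $u^2,v^2,w^2\leq 4K^2$ feeding the forcing) is exactly how the paper obtains the constants in \eqref{def:L}.
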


As before, we immediately deduce the following in the special case $\rt=\rho$, $\bt=\beta$, and $\mu_2=\mu_3=0$.

\begin{Cor}\label{cor:diff:dot:energy}
Let $(x_0,y_0,z_0-\rho-\s)\in\mathscr{B}$. Suppose  $\rt=\rho$, $\bt=\beta$, $\mu_2=\mu_3=0$, and that $\mu_1\geq0$ satisfies \eqref{cond:mu:diff} and \eqref{cond:mu:diff:dot}. Then
\begin{align}
    \mathcal{L}(t) & \leq \mathcal{L}(t_0)e^{-\frac{\mu}2(t-t_0)}  +\frac{c_2}{\mu_1+\st}(\De\s)^2,\notag
\end{align}
for some constant $c_2>0$, depending only on $R,\rho,\s,\be$.
\end{Cor}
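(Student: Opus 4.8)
The plan is to read off Corollary~\ref{cor:diff:dot:energy} directly from Proposition~\ref{prop:diff:dot:energy} by specializing the parameters. When $\rt=\rho$ and $\bt=\beta$ one has $\De\rho=\De\be=0$, so in the definition~\eqref{def:K} of $K^2$ and the definition~\eqref{def:L} of $L^2$ every summand carrying a factor $(\De\rho)^2$ or $(\De\be)^2$ drops out. Taking in addition $\mu_2=\mu_3=0$, so that $\mu_2+1=1$ and $\mu_3+\bt=\beta$, what survives is
\[
K^2=\frac{4R^2}{\mu\,(\mu_1+\st)}\,(\De\s)^2 ,
\]
while $L^2$ reduces to a constant depending only on $R,\rho,\s,\be$ times $\mu^{-1}K^2$ plus a constant of the same type times $\frac{R^2\left[R^2+(\rho+\s)^2+1\right]}{\mu\,(\mu_1+\st)}(\De\s)^2$. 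Hence every term of $L^2$ has the shape $C(R,\rho,\s,\be)\,\mu^{-j}\,\frac{(\De\s)^2}{\mu_1+\st}$ with $j\in\{1,2\}$.

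The only remaining point is to absorb the negative powers of $\mu$ into the constant. For this I would use that $\mu=\min\{\mu_1+\st,\,1,\,\beta\}$ together with the fact that \eqref{cond:mu:diff} forces $\mu_1+\st\geq 16\big(3(\s+\rho)^2+2R^2\big)$; hence $\mu_1+\st$, and therefore $\mu$, is bounded below by a positive quantity $\mu_\ast=\mu_\ast(R,\rho,\s,\be)$, so $\mu^{-1}$ and $\mu^{-2}$ are controlled by constants of the same type. Substituting the displayed formula for $K^2$ into the reduced expression for $L^2$ then gives $L^2\leq \frac{c_2}{\mu_1+\st}(\De\s)^2$ with $c_2=c_2(R,\rho,\s,\be)$, and inserting this into the conclusion $\mathcal{L}(t)\leq e^{-\frac{\mu}{2}(t-t_0)}\mathcal{L}(t_0)+L^2$ of Proposition~\ref{prop:diff:dot:energy} produces exactly the stated bound (subject to the same time restriction $t\geq t_0+\frac{2}{\mu}\log(\mathcal{K}(t_0)/K^2)$ inherited from that proposition, which is automatically met once $t_0$ is past the relaxation time of the position error, i.e.\ once $\mathcal{K}(t_0)\leq K^2$).

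I do not expect any genuine obstacle: the whole content of the corollary is already present in Proposition~\ref{prop:diff:dot:energy}, and the proof amounts to the bookkeeping above. The one point worth a moment's care is that the hypotheses \eqref{cond:mu:diff} and \eqref{cond:mu:diff:dot} themselves involve $(\De\s)^2$; since these are \emph{assumed} rather than derived, there is no circularity, and one may legitimately conclude that the final constant $c_2$ depends on $R,\rho,\s,\be$ alone.
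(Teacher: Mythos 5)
Your proposal is correct and is essentially the paper's own argument: the paper deduces the corollary "immediately" from Proposition~\ref{prop:diff:dot:energy} by setting $\De\rho=\De\be=0$, $\mu_2=\mu_3=0$, $\bt=\be$, so that $K^2$ and $L^2$ collapse to multiples of $(\De\s)^2/(\mu_1+\st)$, with the powers of $\mu^{-1}$ absorbed into $c_2$ since $\mu=\min\{\mu_1+\st,1,\be\}$ is bounded below by a quantity depending only on $R,\rho,\s,\be$ under \eqref{cond:mu:diff}. Your added remarks about the inherited time restriction and the non-circularity of the hypotheses involving $(\De\s)^2$ are accurate and consistent with the paper's usage.
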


\begin{proof}[Proof of \cref{prop:diff:dot:energy}]
We calculate
\begin{align}\label{eq:Ldot}
    \dot{\mathcal{L}}
    =&(\De\s+\De\rho)\gam\de+(\s+\rho)\gam\de-w\gam\de+v\gam\eta+y\gam\eta-z\gam\de-\dot{z}u\de-\dot{x}w\de+\dot{y}u\eta+\dot{x}v\eta\notag\notag\\
    &+(\dot{y}-\dot{x})\gam\De\s+\dot{x}\de\De\rho-\dot{z}\eta\De\be-(\mu_1+\st)\gam^2-(\mu_2+1)\de^2-(\mu_3+\bt)\eta^2\notag\\
    =&L_1+L_2+\dots+L_{14}+L_{15}+L_{16}.
\end{align}
Before estimating the terms $L_j$, we first collect bounds for $\dot{x},\dot{y},\dot{z}$ by making use of the absorbing ball bounds for \eqref{eq:Lor}. Indeed, we have
    \begin{align}\label{est:Lor:dot}
        \begin{split}
        \dot{x}^2&\leq 2\s^2(x^2+y^2)\leq 2\s^2R^2,\\
        \dot{y}^2&\leq3\left(\rho^2x^2+y^2+2x^2(z-\rho-\s)^2+2x^2(\rho+\s)^2\right)\leq12\left(R^2+(\rho+\s)^2+1\right)R^2,\\
        \dot{z}^2&\leq2\left(x^2y^2+2\be^2(z-\rho-\s)^2+2\be^2(\rho+\s)^2\right)\leq 4\left(R^4+\be^2R^2+\be^2(\rho+\s)^2\right).
        \end{split}
    \end{align}
By assumption, $t$ has been taken sufficiently large so that \cref{prop:diff:energy} guarantees, $\mathcal{K}(t)\leq 2K^2$. We then treat $L_1$--$L_{13}$ with Young's inequality, the absorbing ball bounds for \eqref{eq:Lor}, and this bound to estimate
\begin{align}
   L_1 +L_2&\leq\frac{8\left[(\De\s)^2+(\De\rho)^2+(\s+\rho)^2\right]}{\mu_2+1}\gam^2+\frac{\mu_2+1}{8}\de^2,\notag\\
   L_3 &\leq\frac{4}{\mu_2+1}w^2\gam^2+\frac{\mu_2+1}{16}\de^2\leq\frac{16}{\mu_2+1}K^2\gam^2+\frac{\mu_2+1}{16}\de^2,\notag\\
   L_4&\leq \frac{4}{\mu_3+\bt}v^2\gam^2+\frac{\mu_3+\bt}{16}\eta^2\leq \frac{16}{\mu_3+\bt}K^2\gam^2+\frac{\mu_3+\bt}{16}\eta^2,\notag\\
    L_5&\leq \frac{4}{\mu_3+\bt}y^2\gam^2+\frac{\mu_3+\be}{16}\eta^2\leq \frac{4R^2}{\mu_3+\bt}\gam^2+\frac{\mu_3+\bt}{16}\eta^2,\notag\\
    L_6
    &\leq \frac{8}{\mu_2+1}\left[(z-\rho-\s)^2+(\rho+\s)^2\right]\gam^2+\frac{\mu_2+1}{16}\de^2\leq \frac{8\left[R^2+(\rho+\s)^2\right]}{\mu_2+1}\gam^2+\frac{\mu_2+1}{16}\de^2.\notag
\end{align}
Moreover, making use of \eqref{est:Lor:dot}, we obtain the following estimates for $L_7$--$L_{10}$
\begin{align}
    L_7&\leq\frac{4\dot{z}^2}{\mu_2+1}u^2+\frac{\mu_2+1}{16}\de^2\leq\frac{64\left[R^4+\be^2R^2+\be^2(\rho+\s)^2\right]}{\mu_2+1}K^2+\frac{\mu_2+1}{16}\de^2,\notag\\
    L_8&\leq \frac{4\dot{x}^2}{\mu_2+1}w^2+\frac{\mu_2+1}{16}\de^2\leq\frac{32\s^2R^2}{\mu_2+1}K^2+\frac{\mu_2+1}{16}\de^2,\notag\\
    L_9&\leq\frac{4\dot{y}^2}{\mu_3+\bt}u^2+\frac{\mu_3+\bt}{16}\eta^2\leq \frac{192R^2\left[R^2+(\rho+\s)^2+1\right]}{\mu_3+\bt}K^2+\frac{\mu_3+\bt}{16}\eta^2,\notag\\
    L_{10}&\leq \frac{4}{\mu_3+\bt}\dot{x}^2v^2+\frac{\mu_3+\bt}{16}\eta^2\leq \frac{32\s^2R^2}{\mu_3+\bt}K^2+\frac{\mu_3+\bt}{16}\eta^2.\notag
\end{align}
Lastly, we treat $L_{11}$--$L_{13}$ similar to above and obtain
\begin{align}
   L_{11}&\leq\frac{\mu_1+\st}{2}\gam^2+\frac{|\dot{y}-\dot{x}|^2}{2(\mu_1+\st)}(\De\s)^2\leq\frac{\mu_1+\st}2\gam^2+\frac{24R^2\left[R^2+(\rho+\s)^2+1\right]}{\mu_1+\st}(\De\s)^2,\notag\\
   L_{12}&\leq\frac{4}{\mu_2+1}\dot{x}^2(\De\rho)^2+\frac{\mu_2+1}{16}\de^2\leq \frac{8\s^2R^2}{\mu_2+1}(\De\rho)^2+\frac{\mu_2+1}{16}\de^2,\notag\\
   L_{13}&\leq \frac{4}{\mu_3+\bt}\dot{z}^2(\De\be)^2+\frac{\mu_3+\bt}{16}\eta^2\leq \frac{16\left[R^4+\be^2R^2+\be^2(\rho+\s)^2\right]}{\mu_3+\bt}(\De\be)^2+\frac{\mu_3+\bt}{16}\eta^2\notag.
\end{align}
Combining these, we arrive at
\begin{align}
    \dot{\mathcal{L}}\leq&-\frac{1}{2}\left\{\mu_1+\st-\frac{16\left[(\De\s)^2 + (\De\rho)^2 +R^2+2(\rho+\s)^2\right]}{\mu_2+1}-32\left[\frac{1}{\mu_2+1}+\frac{1}{\mu_3+\bt}\right]K^2-\frac{8R^2}{\mu_3+\bt}\right\}\gam^2\notag
    \\
    &-\frac{\mu_2+1}2\de^2-\frac{\mu_3+\bt}2\eta^2\notag
    \\
    &+64\left\{\frac{R^4+(\be^2+\s^2)R^2+\be^2(\rho+\s)^2}{\mu_2+1}+\frac{3R^2\left[R^2+2(\rho+\s)^2+1\right]}{\mu_3+\bt}\right\}K^2\notag
    \\
    &+\frac{24R^2\left[R^2+(\rho+\s)^2+1\right]}{\mu_1+\st}(\De\s)^2+\frac{8\s^2R^2}{\mu_2+1}(\De\rho)^2+\frac{16\left[R^4+\be^2R^2+\be^2(\rho+\s)^2\right]}{\mu_3+\bt}(\De\be)^2\notag.
\end{align}
By \eqref{cond:mu:diff:dot} and Gr\"onwall's inequality, it follows that
\begin{align}
    \mathcal{L}(t)\leq& e^{-\frac{\mu}2(t-t_0)}\mathcal{L}(t_0)+\frac{128}{\mu}\left\{\frac{R^4+(\be^2+\s^2)R^2+\be^2(\rho+\s)^2}{\mu_2+1}+\frac{3R^2\left[R^2+2(\rho+\s)^2+1\right]}{\mu_3+\bt}\right\}K^2\notag\\
    &+\frac{2}{\mu}\left\{\frac{24R^2\left[R^2+(\rho+\s)^2+1\right]}{\mu_1+\st}(\De\s)^2+\frac{8\s^2R^2}{\mu_2+1}(\De\rho)^2+\frac{16\left[R^4+\be^2R^2+\be^2(\rho+\s)^2\right]}{\mu_3+\bt}(\De\be)^2\right\},\notag
\end{align}
as desired.
\end{proof}

Recall the notation introduced in \eqref{def:diff:dot} and define the functionals
    \begin{align}\label{def:dot:funct}
    \mathcal{G}(\gam): = \frac{1}{2}\gamma^2,\quad \mathcal{D}(\de): =\frac{1}2\de^2,\quad\mathcal{E}(\eta):=\frac{1}2\eta^2.
    \end{align}
\begin{Prop}\label{prop:remainder}
 Let $(x_0,y_0,z_0-\rho-\s)\in\mathscr{B}$. Suppose that $\mu_1,\mu_2,\mu_3\geq0$ satisfy \eqref{cond:mu:diff}, \eqref{cond:mu:diff:dot}, and let $\mu$ be given by \eqref{def:mu}. Then given $t_0>0$, it holds that
  \begin{align}\notag
    \mathcal{G}(t)&\leq e^{-(\mu_1+\st)(t-t_0)}\mathcal{G}(t_0)+G^2,\notag\\
    \mathcal{D}(t)&\leq e^{-(\mu_2+1)(t-t_0)}\mathcal{D}(t_0)+D^2,\notag\\
    \mathcal{E}(t)&\leq e^{-(\mu_3+\bt)(t-t_0)}\mathcal{E}(t_0)+E^2, \notag
  \end{align}
provided that $t\geq t_0+\frac{2}{\mu}\log\left(\mathcal{K}(t_0)\mathcal{L}(t_0)/(KL)^2\right)$, where $K,L$ are given by \eqref{def:K}, \eqref{def:L}, respectively, and
    \begin{align}
        G^2&:=\frac{24R^2\left[R^2+2(\rho+\s)^2+1\right]}{(\mu_1+\st)^2}(\De\s)^2+\frac{4\st^2}{(\mu_1+\st)^2}L^2,\label{def:G}\\
       D^2&:=\frac{8\s^2R^2}{\mu_2+1}(\De\rho)^2
        +\frac{128}{\mu_2+1}K^2L^2+\frac{48(R^2+(\rho+\s)^2)}{\mu_2+1}L^2+\frac{128\left[R^4+(\be^2+\s^2)R^2+\be^2(\rho+\s)^2\right]}{\mu_2+1}K^2,\label{def:D}\\
        E^2&:=\frac{8\left[R^4+\be^2R^2+\be^2(\rho+\s)^2\right]}{\mu_3+\bt}(\De\be)^2
        +\frac{64}{\mu_3+\bt}K^2L^2+\frac{16R^2}{\mu_3+\bt}L^2+\frac{48\left[R^2+2(\rho+\s)^2+1\right] R^4}{\mu_3+\bt}K^2.\label{def:E}
    \end{align}
\end{Prop}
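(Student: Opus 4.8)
The plan is to estimate $\mathcal{G}$, $\mathcal{D}$, and $\mathcal{E}$ \emph{separately}, treating each scalar equation of \eqref{eq:diff:dot} as a single linearly damped ODE whose remaining terms act as a known forcing. Multiplying the three equations of \eqref{eq:diff:dot} by $\gam$, $\de$, $\eta$ respectively gives, for instance,
\[
\frac{d}{dt}\mathcal{G}=-(\mu_1+\st)\gam^2+\st\gam\de+(\dot{y}-\dot{x})\De\s\,\gam,
\]
and similarly for $\frac{d}{dt}\mathcal{D}$ and $\frac{d}{dt}\mathcal{E}$, each carrying its own damping $-(\mu_2+1)\de^2$, resp.\ $-(\mu_3+\bt)\eta^2$, together with a finite list of bilinear coupling terms. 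To each coupling term I would apply Young's inequality, peeling off a small fixed fraction of the ambient damping: the first equation has only two coupling terms, so allocating $\tfrac14(\mu_1+\st)\gam^2$ to each leaves $-(\mu_1+\st)\mathcal{G}$ on the right; the second and third equations have more coupling terms, so allocating $\tfrac1{16}(\mu_2+1)\de^2$ (resp.\ $\tfrac1{16}(\mu_3+\bt)\eta^2$) to each still retains a clean $-(\mu_2+1)\mathcal{D}$ (resp.\ $-(\mu_3+\bt)\mathcal{E}$).

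\textbf{Controlling the forcing.} After these splittings each right-hand side is a $t$-independent forcing term, which I would bound using exactly three inputs: the absorbing-ball bound \eqref{eq:Lor:ab} together with the pointwise derivative bounds \eqref{est:Lor:dot}; the a priori bound $\mathcal{K}(t)\le 2K^2$ from \cref{prop:diff:energy}; and the a priori bound $\mathcal{L}(t)\le 2L^2$ from \cref{prop:diff:dot:energy}. The latter two hold once $t\ge t_0+\tfrac2\mu\log(\mathcal{K}(t_0)/K^2)$ and $t\ge t_0+\tfrac2\mu\log(\mathcal{L}(t_0)/L^2)$; since $\log A+\log B\ge\max\{\log A,\log B\}$ when both terms are nonnegative, and when one term is negative the corresponding a priori bound holds on all of $[t_0,\infty)$ by the convex-combination form of \cref{prop:diff:energy,prop:diff:dot:energy}, the single hypothesis $t\ge t_0+\tfrac2\mu\log(\mathcal{K}(t_0)\mathcal{L}(t_0)/(KL)^2)$ suffices; this is also where \eqref{cond:mu:diff} and \eqref{cond:mu:diff:dot} enter, via the applicability of \cref{prop:diff:energy,prop:diff:dot:energy}. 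Every coupling term is of one of three types — a product of two of $\gam,\de,\eta$, bounded through $\gam^2,\de^2,\eta^2\le 2\mathcal{L}\le 4L^2$; a product of one of $u,v,w$ with one of $\gam,\de,\eta$, bounded through $u^2,v^2,w^2\le 2\mathcal{K}\le 4K^2$ and the previous line (producing a $K^2L^2$); or one of these multiplied by a coefficient among $x$, $y$, $z-\rho-\s$, $\dot{x}$, $\dot{y}$, $\dot{z}$ controlled by $R$ and the parameters — so collecting these estimates yields, for example,
\[
\frac{d}{dt}\mathcal{G}\le-(\mu_1+\st)\mathcal{G}+\frac{24R^2\big[R^2+2(\rho+\s)^2+1\big]}{\mu_1+\st}(\De\s)^2+\frac{4\st^2}{\mu_1+\st}L^2,
\]
and the analogous inequalities for $\mathcal{D}$ and $\mathcal{E}$, with forcing constants matching the brackets in \eqref{def:D} and \eqref{def:E} up to the overall factor $\mu_2+1$, resp.\ $\mu_3+\bt$. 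A single application of Gr\"onwall's inequality to each of the three scalar differential inequalities then gives the stated bounds: the division by $\mu_1+\st$ (resp.\ $\mu_2+1$, $\mu_3+\bt$) produced by Gr\"onwall is precisely what turns the forcing constants above into $G^2$, $D^2$, $E^2$ of \eqref{def:G}, \eqref{def:D}, \eqref{def:E}.

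\textbf{Main difficulty.} I do not expect a genuine obstacle: the argument is a systematic repackaging of the estimates already carried out for \cref{prop:diff:energy,prop:diff:dot:energy}, the only new feature being that each component is now damped at its \emph{own} rate $\mu_1+\st$, $\mu_2+1$, or $\mu_3+\bt$ rather than at the common rate $\mu/2$. The points that require care are organizational: (i) making sure the fractions of the damping term peeled off by Young's inequality across all coupling terms of a single equation sum to at most one half, so a clean damping term survives; and (ii) tracking which a priori estimate ($\mathcal{K}\le 2K^2$ or $\mathcal{L}\le 2L^2$) is invoked for each coupling term, since this determines whether the associated forcing constant carries a $K^2$, an $L^2$, or a $K^2L^2$. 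Both are routine given the templates established in the preceding proofs.
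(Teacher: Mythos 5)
Your proposal is correct and follows essentially the same route as the paper: multiply each equation of \eqref{eq:diff:dot} by its own variable, peel off fixed fractions of that equation's own damping via Young's inequality, bound the forcing with the absorbing-ball estimates \eqref{est:Lor:dot} together with $\mathcal{K}\le 2K^2$ and $\mathcal{L}\le 2L^2$, and apply Gr\"onwall separately to each scalar inequality. The only cosmetic difference is bookkeeping at the last step: the paper performs the Gr\"onwall division by the damping rate only for $\mathcal{G}$ (hence the $(\mu_1+\st)^2$ in $G^2$) and simply retains the undivided forcing constants as $D^2$ and $E^2$, whereas your divided versions give bounds at least as strong for $\mathcal{D}$ (since $\mu_2+1\ge 1$) and match the stated conclusion.
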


\begin{proof}
We calculate
    \begin{align}
        \dot{\mathcal{G}}&=\st\de\gam+(\dot{y}-\dot{x})\gam\De\s-(\mu_1+\st)\gam^2=G_1+G_2+G_3,\notag\\
        \dot{\mathcal{D}}&=\rt\gam\de+\dot{x}\de\De\rho-(\gam w+u\eta)\de-(\gam z+x\eta)\de-(\dot{z}u+\dot{x}w)\de-(\mu_2+1)\de^2=D_1+\dots+D_6,\notag\\
        \dot{\mathcal{E}}&=-\dot{z}\eta\De\be+(v\gam+u\de)\eta+(y\gam+x\de)\eta+(\dot{y}u+\dot{x}v)\eta-(\mu_3+\bt)\eta^2=E_1+\dots+E_5.\notag
    \end{align}
By assumption, $t$ has been taken sufficiently large, so that \cref{prop:diff:dot:energy} guarantees $\mathcal{L}(t)\leq 2L^2$. By Young's inequality and \eqref{est:Lor:dot}, we estimate
    \begin{align}
        G_1&\leq \frac{\mu_1+\st}4\gam^2+\frac{\st^2}{\mu_1+\st}\de^2\leq\frac{\mu_1+\st}4\gam^2+\frac{4\st^2}{\mu_1+\st}L^2,\notag\\
        G_2&\leq \frac{2(\dot{y}^2+\dot{x}^2)}{\mu_1+\st}(\De\s)^2+\frac{\mu_1+\st}4\gam^2\leq \frac{24R^2\left[R^2+2(\rho+\s)^2+1\right]}{\mu_1+\st}(\De\s)^2+\frac{\mu_1+\st}4\gam^2.\notag
    \end{align}
Hence
    \begin{align}
        \dot{\mathcal{G}}\leq -(\mu_1+\st)\mathcal{G}+\frac{24R^2\left[R^2+2(\rho+\s)^2+1\right]}{\mu_1+\st}(\De\s)^2+\frac{4\st^2}{\mu_1+\st}L^2,\notag
    \end{align}
so that by Gr\"onwall's inequality, we deduce
    \begin{align}
        \mathcal{G}(t)\leq e^{-(\mu_1+\st)(t-t_0)}\mathcal{G}(t_0)+\frac{24R^2\left[R^2+2(\rho+\s)^2+1\right]}{(\mu_1+\st)^2}(\De\s)^2+\frac{4\st^2}{(\mu_1+\st)^2}L^2.\notag
    \end{align} 

Similarly, for $\mathcal{D}$, we have
    \begin{align}
        D_1&\leq \frac{4\rt}{\mu_2+1}\gam^2+\frac{\mu_2+1}{16}\de^2\leq \frac{16\rt}{\mu_2+1}L^2+\frac{\mu_2+1}{16}\de^2\notag\\
        D_2&\leq \frac{4\dot{x}^2}{\mu_2+1}+\frac{\mu_2+1}{16}(\De\rho)^2\leq \frac{8\s^2R^2}{\mu_2+1}(\De\rho)^2+\frac{\mu_2+1}{16}\de^2\notag\\
        D_3&\leq \frac{4(w^2\gam^2+u^2\eta^2)}{\mu_2+1}+\frac{\mu_2+1}{8}\de^2\leq \frac{128}{\mu_2+1}K^2L^ 2+\frac{\mu_2+1}{8}\de^2,\notag\\
        D_4&\leq \frac{4(2\gam^2(z-\rho-\sigma)^2 + 2\gam^2(\rho+\sigma)^2+x^2\eta^2)}{\mu_2+1}+\frac{\mu_2+1}8\de^2\leq \frac{48(R^2+(\rho+\sigma)^2)}{\mu_2+1}L^2+\frac{\mu_2+1}8\de^2,\notag\\
        D_5&\leq 8\frac{u^2\dot{z}^2+\dot{x}^2w^2}{\mu_2+1}+\frac{\mu_2+1}{16}\de^2\leq \frac{128\left[R^4+(\be^2+\s^2)R^2+\be^2(\rho+\s)^2\right]}{\mu_2+1}K^2+\frac{\mu_2+1}{16}\de^2.\notag
    \end{align}
Hence
    \begin{align}
        \dot{\mathcal{D}}\leq& -(\mu_2+1)\mathcal{D}+\frac{8\s^2R^2}{\mu_2+1}(\De\rho)^2\notag\\
        &+\frac{128}{\mu_2+1}K^2L^2+\frac{48(R^2+(\rho+\s)^2)}{\mu_2+1}L^2+\frac{128\left[R^4+(\be^2+\s^2)R^2+\be^2(\rho+\s)^2\right]}{\mu_2+1}K^2,\notag
    \end{align}
so that
    \begin{align}
        \mathcal{D}(t)\leq& e^{-(\mu_2+1)(t-t_0)}\mathcal{D}(t_0)+\frac{8\s^2R^2}{\mu_2+1}(\De\rho)^2\notag\\
        &+\frac{128}{\mu_2+1}K^2L^2+\frac{48(R^2+(\rho+\s)^2)}{\mu_2+1}L^2+\frac{128\left[R^4+(\be^2+\s^2)R^2+\be^2(\rho+\s)^2\right]}{\mu_2+1}K^2.\notag
    \end{align}
    
Lastly, we treat $\mathcal{E}$ and estimate
    \begin{align}
        E_1&\leq \frac{2\dot{z}^2}{\mu_3+\bt}(\De\be)^2+\frac{\mu_3+\bt}{8}\eta^2\leq\frac{8\left[R^4+\be^2R^2+\be^2(\rho+\s)^2\right]}{\mu_3+\bt}(\De\be)^2+\frac{\mu_3+\bt}{8}\eta^2,\notag\\
        E_2&\leq \frac{4(v^2\gam^2+u^2\de^2)}{\mu_3+\bt}+\frac{\mu_3+\bt}{8}\eta^2\leq\frac{64}{\mu_3+\bt}K^2L^2+\frac{\mu_3+\bt}{8}\eta^2,\notag\\
        E_3&\leq \frac{4(y^2\gam^2+x^2\de^2)}{\mu_3+\bt}+\frac{\mu_3+\bt}{8}\eta^2\leq \frac{16R^2}{\mu_3+\bt}L^2+\frac{\mu_3+\bt}8\eta^2,\notag\\
        E_4&\leq \frac{4(\dot{y}^2u^2+\dot{x}^2v^2)}{\mu_3+\bt}+\frac{\mu_3+\bt}8\eta^2\leq \frac{48\left[R^2+2(\rho+\s)^2+1\right] R^4}{\mu_3+\bt}K^2+\frac{\mu_3+\bt}8\eta^2.\notag
    \end{align}
It follows that
    \begin{align}
        \dot{\mathcal{E}}\leq& -(\mu_3+\bt)\mathcal{E}+\frac{8\left[R^4+\be^2R^2+\be^2(\rho+\s)^2\right]}{\mu_3+\bt}(\De\be)^2\notag\\
        &+\frac{64}{\mu_3+\bt}K^2L^2+\frac{16R^2}{\mu_3+\bt}L^2+\frac{48\left[R^2+2(\rho+\s)^2+1\right] R^4}{\mu_3+\bt}K^2.\notag
    \end{align}
An application of Gr\"onwall's inequality yields
    \begin{align}
        \mathcal{E}(t)\leq& e^{-(\mu_3+\bt)(t-t_0)}\mathcal{E}(t_0)+\frac{8\left[R^4+\be^2R^2+\be^2(\rho+\s)^2\right]}{\mu_3+\bt}(\De\be)^2\notag\\
        &+\frac{64}{\mu_3+\bt}K^2L^2+\frac{16R^2}{\mu_3+\bt}L^2+\frac{48\left[R^2+2(\rho+\s)^2+1\right] R^4}{\mu_3+\bt}K^2,\notag
    \end{align}
which completes the proof.
\end{proof}

\subsection{Proof of Theorem \ref{thm:convergence:x}}\label{sect:proofs:details}
Recall that we will employ the rule given by \eqref{def:update} for updating values of the unknown parameters $\s,\rho,\be$. From \eqref{eq:diff_pre}, we thus observe that over the interval $I_n$, the derivative of the differences, $\dot{u}$, can be rewritten as
\begin{align}\label{eq:u:proof}
    \dot{u}= (\sigma_n-\sigma)(\ty-\tx) + \sigma(v-u) - \mu_1 u,
\end{align}
where we recall $(u,v,w)$ to be defined by \eqref{def:diff}. We then rearrange this to obtain
\begin{align}\label{eq:sigma:proof}
    \sigma_n-\sigma = \frac{\dot{u}-\s(v-u)+\mu_1 u}{\ty-\tx}.
\end{align}
Upon evaluating at $t=t_{n+1}^-$ and recalling the convention \eqref{def:discrete:diff}, substitution into the parameter recovery formulas \eqref{def:update} then yield the following identities:
\begin{align}
    \begin{split}\label{eq:sigma:diff:proof}
  \sigma_{n+1}-\sigma = \frac{\dot{u}_{n+1}-\sigma(v_{n+1}-u_{n+1})}{\ty_{n+1}-\tx_{n+1}}.
 \end{split}
\end{align}

\begin{proof}[Proof of \cref{thm:convergence:x}]
We proceed by induction. Let $N=1$ and define 
    \begin{align}\notag
        \mu_0=\min\left\{\mu_1,\mu_2+1,\mu_3+\be\right\}.
    \end{align}
Consider any $\s_0>0$ such that $|\s_0-\s|\leq M$. Observe that $\mu\geq\mu_0$, where $\mu$ is given by \eqref{def:mu}. Suppose that $\mu_1$ satisfies \eqref{cond:mu:diff} and \eqref{cond:mu:diff:dot}, where $\De\s$ is replaced by $M$, so that \eqref{cond:mu:large} is satisfied. Let $t_0=0$ and suppose also that
    \begin{align}\notag
        \tau_1> \frac{4}{\mu_0}\log_+\left(\mathcal{K}(0)\mathcal{L}(0)/(KL)^2\right),
    \end{align}
where $\mathcal{K}, \mathcal{L}$ are defined by \eqref{def:K:funct}, \eqref{def:L:funct}, respectively, and all quantities involving $\mu$ are replaced by $\mu_0$ therein. Observe that $\tau_1> t_0$. Now by \cref{cor:diff:energy} and \cref{cor:diff:dot:energy}, we have
    \begin{align}\notag
        \mathcal{K}(t)\leq \frac{c_1}{\mu_1}|\s_0-\s|^2,\quad \mathcal{L}(t)\leq \frac{c_2}{\mu_1}|\s_0-\s|^2,
    \end{align}
for all $0\leq t\leq \tau_1$.

Choose any $t_1$ such that $\tau_1/2< t_1< \tau_1$. Upon returning to \eqref{eq:sigma:diff:proof}, it now follows that
    \begin{align}\notag
        |\s_{n+1}-\s|\leq \frac{2(\sqrt{c_1}+\sqrt{c_2})}{\sqrt{\mu_1}}\frac{|\s_0-\s|}{\veps},
    \end{align}
where we have applied the hypothesis \eqref{cond:non:degen}. By choosing $\mu_1$ sufficiently large satisfying \eqref{cond:mu:converge}, we obtain
    \begin{align}\notag
          |\s_{n+1}-\s|\leq \eps|\s_0-\s|,
    \end{align}
where $\eps=(\s\wedge1)/(2M)\leq1/2$. Notice that this implies $\s_{n+1}>0$. This establishes the base case. 

Now suppose that \eqref{eq:converge} holds for $N>1$. Choose $\mu_1$ as in the base case and $\tau_{N+1}$ such that
    \begin{align}\notag
        \tau_{N+1}> t_N+\frac{4}{\mu}\log_+\left(\mathcal{K}(0)\mathcal{L}(0)/(KL)^2\right),
    \end{align}
so that $\tau_{N+1}>t_N$. Then by \cref{cor:diff:energy} and \cref{cor:diff:dot:energy}, it follows that
    \begin{align}\notag
     \mathcal{K}(t)\leq \frac{c_1}{\mu_1}|\s_N-\s|^2,\quad \mathcal{L}(t)\leq \frac{c_2}{\mu_1}|\s_N-\s|^2,
    \end{align}
for all $t_N\leq t\leq \tau_N$. We now choose any $t_{N+1}$ such that $(t_N+\tau_{N+1})/2< t_{N+1}<\tau_{N+1}$. Then from \eqref{eq:sigma:diff:proof} and the induction hypothesis, we obtain
    \begin{align}\notag
    |\s_{N+1}-\s|\leq \eps|\s_N-\s|\leq \eps^{N+1}|\s_0-\s|.
    \end{align}
By definition of $\eps$, we again have $\s_{N+1}>0$. This completes the proof.
\end{proof}

\begin{Rmk}\label{rmk:multi:proof}
A similar argument to the one presented above for \cref{thm:convergence:x} can also be provided for the proof of \cref{thm:convergence:xy} and, in fact, all other combinations. With slight modifications to the estimates made in \cref{prop:diff:energy}, \cref{prop:diff:dot:energy}, and \cref{prop:remainder}, we may obtain statements analogous to \cref{cor:diff:energy} and \cref{cor:diff:dot:energy}, which are adapted to the case of the particular combination of interest, e.g., recovering $(\s,\rho)$, etc. Indeed, the apriori estimates in \cref{sect:apriori} have been performed for all variables precisely to accommodate all possible combinations for parameter recovery. For these other combinations, in addition to \eqref{eq:u:proof}, one considers
    \begin{align}
         \dot{v}&=(\rho_n-\rho)\tx+\rho u-uw-uz-xw-(1+\mu_2)v\notag\\
    \dot{w}&=-(\be_n-\be)\tz+uv+uy+xv-(\mu_3+\be)w\notag.
    \end{align}
One then derives identities analogous to \eqref{eq:sigma:proof} for $\rho_n$ and $\be_n$ given by
    \begin{align}
        \rho_n-\rho&=\frac{\dot{v}-\rho u+uw+uz+xw+(1+\mu_2)v}{\tx},\notag\\
    \be_n-\be&=\frac{-\dot{w}+\beta w+ uv+uy+xv-\mu_3w}{\tz}\notag.
    \end{align}
Ultimately, one then considers
    \begin{align}
         \rho_{n+1} - \rho &= \frac{\dot{v}_{n+1} -\rho u_{n+1} + u_{n+1}w_{n+1}+ u_{n+1}z_{n+1}+ x_{n+1}w_{n+1} + v_{n+1}}{\tx},\notag
  \\
  \beta_{n+1} - \beta &= \frac{-\dot{w}_{n+1} +u_{n+1}v_{n+1}+u_{n+1}y_{n+1}+x_{n+1}v_{n+1}-\beta w_{n+1}}{\tz},\notag
    \end{align}
which play roles analogous to the one played by \eqref{eq:sigma:diff:proof} in the proof of \cref{thm:convergence:x}. The only major technical differences are that the time derivatives are to be estimated with \cref{prop:remainder} and treatment of the terms which are quadratic in the difference variables, $u,v,w$. The quadratic terms, however, present no difficulties whatsoever, as the estimates supplied in \cref{sect:apriori} ultimately provide sufficient control over these terms. Specifically, our estimates allow them to be treated in such a way as if they were linear in the difference variables to begin with; from this point, the proof then proceeds as in the one provided for \cref{thm:convergence:x} above.
\end{Rmk}

\section{Conclusion}\label{sec:conclusions}
We have developed a rigorously justified algorithm for learning the parameters of the Lorenz equations from partially observed data. Sufficient hypotheses for establishing convergence of this scheme are detailed in Theorem \ref{thm:convergence:x} and \ref{thm:convergence:xy}.  We emphasize that these hypotheses appear to be analytically pessimistic in the sense that the true parameters can be computationally learned using nudging parameters that are much smaller than those indicated by \eqref{cond:mu:large:xy}.  In addition, although the proofs that establish these theorems rely on continuous observations of the state variables, computationally we find that infrequent discrete observations are sufficient.  We also find that the parameter learning algorithm presented here is robust to the presence of stochastic noise in the state evolution, as well as noise in the observation operator.

The approach taken here is sufficiently robust and concise that we anticipate the current results can be extended to other systems more complicated than the Lorenz equations.  For instance, we anticipate that rigorous justification for the learning of multiple parameters in the PDE setting is within reach using the analysis here as a guide.

\newpage
\appendix
\section{Simplified MATLAB code}\label{sect:matlab}
\noindent We present a compressed version of the MATLAB code used to produce the plots in \Cref{sec:mprwssit,sec:mprwssit_noise}.

\begin{lstlisting}[numbers=left]
sigma = 10; rho = 28; beta = 8/3; % Lorenz parameters
sigma_DA=0.8*sigma;rho_DA=0.8*rho;beta_DA=0.8*beta; % Initial guess
t0 = 0; tf = 150; % Initial and final times
dt = 0.0001; % Time-step % Note: ode45 predicts min(dt)=0.00193
dt_obs = 0.05; % How often solution is observed
dt_param = 2.0; % How often to update the parameters
p_tol = 0.0001; % Tolerence for parameter switching
mu = 1.8/dt; % AOT nudging parameter
mu_p = mu/1000; % mu for updating the parameters
eta = 0; % Amplitude of noise of measurements
epsilon = 0; % Stochastic forcing amplitude on Lorenz
t = t0:dt:tf; N = length(t); rng(0);
U=[8.15641407246436;10.8938717856828;22.3338694390332]; V=[0;0;0];
lorenz  = @(U,sto)[    sigma*(U(2)-U(1));U(1)*(rho   -U(3))-U(2);...
           U(1)*U(2)-beta*U(3)] + sto;
lorenz_DA = @(V,FC)[sigma_DA*(V(2)-V(1));V(1)*(rho_DA-V(3))-V(2);...
           V(1)*V(2)-beta_DA*V(3)] - FC; % Data Assimilation
e_sol=zeros(1,N);e_sigma=zeros(1,N);e_rho=zeros(1,N);e_beta=zeros(1,N);
obs_int = round(dt_obs/dt); param_int = round(dt_param/dt);
for ti = 1:N
  e_sol(ti) = norm(U-V); e_sigma(ti) = abs(sigma_DA-sigma);
  e_rho(ti) = abs(rho_DA-rho); e_beta(ti) = abs(beta_DA-beta);
  if mod(ti,obs_int) == 1 % If true, update feedback control term
    FC = mu*(V - (U + eta*randn(3,1)));
  else
    FC = [0;0;0];
  end
  if (mod(ti,param_int)==0) % If true, update parameters
    if (abs(V(2)-V(1)) > p_tol)
      sigma_DA = sigma_DA - mu_p*(V(1)-U(1))/(V(2)-V(1));
    end
    if (abs(V(1)) > p_tol)
      rho_DA   = rho_DA   - mu_p*(V(2)-U(2))/(V(1));
    end
    if (abs(V(3)) > p_tol)
      beta_DA  = beta_DA  + mu_p*(V(3)-U(3))/(V(3));
    end
    lorenz_DA=@(V,FC)[sigma_DA*(V(2)-V(1));V(1)*(rho_DA-V(3))-V(2);...
                      V(1)*V(2)-beta_DA*V(3)] - FC;
  end
  U = U + dt*lorenz(U,epsilon*sqrt(dt)*randn(3,1));
  V = V + dt*lorenz_DA(V,FC);
end
semilogy(t,e_sol); hold on; semilogy(t,e_sigma); semilogy(t,e_rho); 
semilogy(t,e_beta); xlabel('time'); ylabel('Error'); axis('tight');
legend('Solution Error','|\Delta\sigma|','|\Delta\rho|','|\Delta\beta|');
\end{lstlisting}

\begin{multicols}{2}

\noindent Elizabeth Carlson\\
{\footnotesize
Department of Mathematics\\
University of Victoria\\
Email: \url{elizabeth.carlson@huskers.unl.edu}}\\[.2cm]
\\

\noindent Joshua Hudson\\ 
{\footnotesize
Combustion Research Facility\\
Sandia National Laboratories\\
Email: \url{jlhudso@sandia.gov}}\\[.2cm]
\\

\noindent Adam Larios\\ 
{\footnotesize
Department of Mathematics\\
University of Nebraska--Lincoln\\
Email: \url{alarios@unl.edu}}\\[.2cm]

\columnbreak 

\noindent Vincent R. Martinez\\
{\footnotesize
Department of Mathematics \& Statistics\\
CUNY Hunter College\\
Email: \url{vrmartinez@hunter.cuny.edu}}\\[.2cm]
 \\

\noindent Eunice Ng\\ 
{\footnotesize
Department of Mathematics\\
Stony Brook University\\
Email: \url{eunice.ng@stonybrook.edu}}\\[.2cm]

\noindent Jared P. Whitehead\\ 
{\footnotesize
Department of Mathematics \& Statistics\\
Brigham Young University\\
Email: \url{whitehead@mathematics.byu.edu}}\\[.2cm]

\end{multicols}


\begin{thebibliography}{10}

\bibitem{agarwal2016maximal}
S.~Agarwal and J.~Wettlaufer.
\newblock Maximal stochastic transport in the {L}orenz equations.
\newblock {\em Phys. Lett. A}, 380(1-2):142--146, 2016.

\bibitem{albanez2016continuous}
D.~Albanez, H.~Nussenzveig~Lopes, and E.~Titi.
\newblock Continuous data assimilation for the three-dimensional
  {N}avier--{S}tokes-$\alpha$ model.
\newblock {\em Asymptotic Anal.}, 97(1-2):139--164, 2016.

\bibitem{Altaf_Titi_Knio_Zhao_Mc_Cabe_Hoteit_2015}
M.~U. {Altaf}, E.~S. {Titi}, O.~M. {Knio}, L.~{Zhao}, M.~F. {McCabe}, and
  I.~{Hoteit}.
\newblock Downscaling the 2{D} {B}enard convection equations using continuous
  data assimilation.
\newblock {\em Comput. Geosci}, 21(3):393--410, 2017.

\bibitem{ayed2019learning}
I.~Ayed, E.~de~B{\'e}zenac, A.~Pajot, J.~Brajard, and P.~Gallinari.
\newblock Learning dynamical systems from partial observations.
\newblock {\em arXiv:1902.11136}, 2019.

\bibitem{azouani2014continuous}
A.~Azouani, E.~Olson, and E.~Titi.
\newblock Continuous data assimilation using general interpolant observables.
\newblock {\em J. Nonlinear Sci.}, 24(2):277--304, 2014.

\bibitem{barrio2007study}
R.~Barrio and S.~Serrano.
\newblock A three-parametric study of the {L}orenz model.
\newblock {\em Physica D}, 229(1):43--51, 2007.

\bibitem{baumeister1997line}
J.~Baumeister, W.~Scondo, M.~Demetriou, and I.~Rosen.
\newblock On-line parameter estimation for infinite-dimensional dynamical
  systems.
\newblock {\em SIAM J. Control Optim.}, 35(2):678--713, 1997.

\bibitem{bessaih2015continuous}
H.~Bessaih, E.~Olson, and E.~Titi.
\newblock Continuous data assimilation with stochastically noisy data.
\newblock {\em Nonlinearity}, 28(3):729, 2015.

\bibitem{Biswas_Bradshaw_Jolly_2020}
A.~Biswas, Z.~Bradshaw, and M.~Jolly.
\newblock Data assimilation for the {N}avier-{S}tokes equations using local
  observables.
\newblock {\em arXiv 2008.06949}, 2020.

\bibitem{Biswas_Foias_Mondaini_Titi_2018downscaling}
A.~Biswas, C.~Foias, C.~Mondaini, and E.~Titi.
\newblock Downscaling data assimilation algorithm with applications to
  statistical solutions of the {N}avier--{S}tokes equations.
\newblock {\em Ann. Inst. H. Poincar\'{e} Anal. Non Lin\'{e}aire}, pages
  295--326, 2019.

\bibitem{biswas2018continuous}
A.~Biswas, J.~Hudson, A.~Larios, and Y.~Pei.
\newblock Continuous data assimilation for the 2{D} magnetohydrodynamic
  equations using one component of the velocity and magnetic fields.
\newblock {\em Asymptotic Anal.}, 108(1-2):1--43, 2018.

\bibitem{Biswas_Martinez_2017}
A.~Biswas and V.~R. Martinez.
\newblock Higher-order synchronization for a data assimilation algorithm for
  the 2{D} {N}avier--{S}tokes equations.
\newblock {\em Nonlinear Anal. Real World Appl.}, 35:132--157, 2017.

\bibitem{Biswas_Price_2020_AOT3D}
A.~Biswas and R.~Price.
\newblock Continuous data assimilation for the three dimensional
  {N}avier-{S}tokes equations.
\newblock {\em arXiv 2003.01329}, 2020.

\bibitem{blocher2018data}
J.~Blocher, V.~Martinez, and E.~Olson.
\newblock Data assimilation using noisy time-averaged measurements.
\newblock {\em Physica D}, 376:49--59, 2018.

\bibitem{BlomkerLawStuartZygalakis2013}
D.~Bl\"{o}mker, K.~Law, A.~M. Stuart, and K.~C. Zygalakis.
\newblock Accuracy and stability of the continuous-time 3{DVAR} filter for the
  {N}avier-{S}tokes equation.
\newblock {\em Nonlinearity}, 26(8):2193--2219, 2013.

\bibitem{Carlson_Hudson_Larios_2018}
E.~Carlson, J.~Hudson, and A.~Larios.
\newblock Parameter recovery for the 2 dimensional {N}avier-{S}tokes equations
  via continuous data assimilation.
\newblock {\em SIAM J. Sci. Comput.}, 42(1):A250--A270, 2020.

\bibitem{Carlson_Larios_2021_sens}
E.~Carlson and A.~Larios.
\newblock Sensitivity analysis for the 2{D} {N}avier-{S}tokes equations with
  applications to continuous data assimilation.
\newblock {\em J. Nonlinear Sci.}, 2021.
\newblock (to appear).

\bibitem{Carlson_VanRoekel_Petersen_Godinez_Larios_2021}
E.~Carlson, L.~Van~Roekel, M.~Petersen, H.~Godinez, and A.~Larios.
\newblock {CDA} algorithm implemented in {MPAS-O} to improve eddy effects in a
  mesoscale simulation.
\newblock {\em (submitted)}, 2021.
\newblock DOI: 10.1002/essoar.10507378.1.

\bibitem{Celik_Olson_Titi_2019}
E.~Celik, E.~Olson, and E.~S. Titi.
\newblock Spectral filtering of interpolant observables for a discrete-in-time
  downscaling data assimilation algorithm.
\newblock {\em SIAM J. Appl. Dyn. Syst.}, 18(2):1118--1142, 2019.

\bibitem{Chen_Li_Lunasin_2021}
N.~Chen, Y.~Li, and E.~Lunasin.
\newblock An efficient continuous data assimilation algorithm for the sabra
  shell model of turbulence.
\newblock {\em arXiv 2105.10020}, 2021.

\bibitem{CialencoGlattHoltz2011}
I.~Cialenco and N.~Glatt-Holtz.
\newblock Parameter estimation for the stochastically perturbed navier–stokes
  equations.
\newblock {\em Stochastic Processes Appl.}, 121(4):701--724, 2011.

\bibitem{DiLeoni_Clark_Mazzino_Biferale_2018_inferring}
P.~Clark Di~Leoni, A.~Mazzino, and L.~Biferale.
\newblock Inferring flow parameters and turbulent configuration with
  physics-informed data assimilation and spectral nudging.
\newblock {\em Phys. Rev. Fluids}, 3(10):104604, 2018.

\bibitem{di2020synchronization}
P.~Clark Di~Leoni, A.~Mazzino, and L.~Biferale.
\newblock Synchronization to big data: {N}udging the {N}avier--{S}tokes
  equations for data assimilation of turbulent flows.
\newblock {\em Phys. Rev. X}, 10(1):011023, 2020.

\bibitem{dashti2017bayesian}
M.~Dashti and A.~M. Stuart.
\newblock {\em The {B}ayesian Approach to Inverse Problems}.
\newblock Springer, 2017.

\bibitem{Desamsetti_Dasari_Langodan_Knio_Hoteit_Titi_2019_WRF}
S.~Desamsetti, H.~Dasari, S.~Langodan, O.~Knio, I.~Hoteit, and E.~S. Titi.
\newblock Efficient dynamical downscaling of general circulation models using
  continuous data assimilation.
\newblock {\em Quart. J. Royal Met. Soc.}, 2019.

\bibitem{Diegel_Rebholz_2021}
A.~Diegel and L.~Rebholz.
\newblock Continuous data assimilation and long-time accuracy in a $c^0$
  interior penalty method for the {C}ahn-{H}illiard equation.
\newblock {\em arXiv 2106.14744}, 2021.

\bibitem{ding2019gradient}
F.~Ding, J.~Pan, A.~Alsaedi, and T.~Hayat.
\newblock Gradient-based iterative parameter estimation algorithms for
  dynamical systems from observation data.
\newblock {\em Mathematics}, 7(5):428, 2019.

\bibitem{doering1995shape}
C.~Doering and J.~Gibbon.
\newblock On the shape and dimension of the {L}orenz attractor.
\newblock {\em Dyn. Stab. Syst.}, 10(3):255--268, 1995.

\bibitem{Doering_Gibbon_1995_book}
C.~R. Doering and J.~D. Gibbon.
\newblock {\em Applied {A}nalysis of the {N}avier--{S}tokes {E}quations}.
\newblock Cambridge Texts in Applied Mathematics. Cambridge University Press,
  Cambridge, 1995.

\bibitem{Du_Shiue2021}
Y.~J. Du and M.-C. Shiue.
\newblock Analysis and computation of continuous data assimilation algorithms
  for {L}orenz 63 system based on nonlinear nudging techniques.
\newblock {\em J. Comput. Appl. Math.}, 386:113246, 2021.

\bibitem{dullin2007extended}
H.~R. Dullin, S.~Schmidt, P.~H. Richter, and S.~K. Grossman.
\newblock Extended phase diagram of the {L}orenz model.
\newblock {\em Int. J. Bifurcation Chaos}, 17(9):3013--3033, 2007.

\bibitem{evensen2009ensemble}
G.~Evensen.
\newblock The ensemble {K}alman filter for combined state and parameter
  estimation.
\newblock {\em IEEE Control Syst.}, 29(3):83--104, 2009.

\bibitem{farhat2020data}
A.~Farhat, N.~Glatt-Holtz, V.~Martinez, S.~McQuarrie, and J.~Whitehead.
\newblock Data assimilation in large {P}randtl {R}ayleigh--{B}enard convection
  from thermal measurements.
\newblock {\em SIAM J. Appl. Dyn. Sys.}, 19(1):510--540, 2020.

\bibitem{Farhat_Jolly_Titi_2015}
A.~Farhat, M.~S. Jolly, and E.~S. Titi.
\newblock Continuous data assimilation for the 2{D} {B}\'enard convection
  through velocity measurements alone.
\newblock {\em Physica D}, 303:59--66, 2015.

\bibitem{farhat2016charney}
A.~Farhat, E.~Lunasin, and E.~Titi.
\newblock On the {C}harney conjecture of data assimilation employing
  temperature measurements alone: the paradigm of 3{D} planetary geostrophic
  model.
\newblock {\em Math. Clim. Weather Forecast.}, 2(1), 2016.

\bibitem{Farhat_Lunasin_Titi_2016abridged}
A.~Farhat, E.~Lunasin, and E.~S. Titi.
\newblock Abridged continuous data assimilation for the 2{D} {N}avier--{S}tokes
  equations utilizing measurements of only one component of the velocity field.
\newblock {\em J. Math. Fluid Mech.}, 18(1):1--23, 2016.

\bibitem{Farhat_Lunasin_Titi_2016benard}
A.~Farhat, E.~Lunasin, and E.~S. Titi.
\newblock Data assimilation algorithm for 3{D} {B\'e}nard convection in porous
  media employing only temperature measurements.
\newblock {\em J. Math. Anal. Appl.}, 438(1):492--506, 2016.

\bibitem{Farhat_Lunasin_Titi_2017_Horizontal}
A.~Farhat, E.~Lunasin, and E.~S. Titi.
\newblock Continuous data assimilation for a {2D} {B}\'enard convection system
  through horizontal velocity measurements alone.
\newblock {\em J. Nonlinear Sci.}, pages 1--23, 2017.

\bibitem{foias2001lorenz}
C.~Foias, M.~Jolly, I.~Kukavica, and E.~Titi.
\newblock The {L}orenz equation as a metaphor for the {N}avier--{S}tokes
  equations.
\newblock {\em Discrete Contin. Dyn. Syst.}, 7(2):403, 2001.

\bibitem{Foias_Mondaini_Titi_2016}
C.~Foias, C.~F. Mondaini, and E.~S. Titi.
\newblock A discrete data assimilation scheme for the solutions of the
  two-dimensional {N}avier--{S}tokes equations and their statistics.
\newblock {\em SIAM J. Appl. Dyn. Syst.}, 15(4):2109--2142, 2016.

\bibitem{foster2020learning}
D.~Foster, T.~Sarkar, and A.~Rakhlin.
\newblock Learning nonlinear dynamical systems from a single trajectory.
\newblock In {\em Learning for Dynamics and Control}, pages 851--861. PMLR,
  2020.

\bibitem{Franz_Larios_Victor_2021}
T.~Franz, A.~Larios, and C.~Victor.
\newblock The bleeps, the sweeps, and the creeps: Convergence rates for
  observer patterns via data assimilation for the 2{D} {N}avier-{S}tokes
  equations.
\newblock {\em (In preparation)}, 2021.

\bibitem{GarciaArchilla_Novo_2020}
B.~Garc\'{\i}a-Archilla and J.~Novo.
\newblock Error analysis of fully discrete mixed finite element data
  assimilation schemes for the {N}avier-{S}tokes equations.
\newblock {\em Adv. Comput. Math.}, 46(4):Paper No. 61, 33, 2020.

\bibitem{GarciaArchilla_Novo_Titi_2018}
B.~Garc\'{\i}a-Archilla, J.~Novo, and E.~S. Titi.
\newblock Uniform in time error estimates for a finite element method applied
  to a downscaling data assimilation algorithm for the {N}avier-{S}tokes
  equations.
\newblock {\em SIAM J. Numer. Anal.}, 58(1):410--429, 2020.

\bibitem{Gardner_Larios_Rebholz_Vargun_Zerfas_2020_VVDA}
M.~Gardner, A.~Larios, L.~G. Rebholz, D.~Vargun, and C.~Zerfas.
\newblock Continuous data assimilation applied to a velocity-vorticity
  formulation of the 2{D} {N}avier-{S}tokes equations.
\newblock {\em Electron Res. Arch.}, 29(3):2223--2247, 2021.

\bibitem{Gesho_Olson_Titi_2015}
M.~Gesho, E.~Olson, and E.~S. Titi.
\newblock A computational study of a data assimilation algorithm for the
  two-dimensional {N}avier--{S}tokes equations.
\newblock {\em Commun. Comput. Phys.}, 19(4):1094--1110, 2016.

\bibitem{hayden2011discrete}
K.~Hayden, E.~Olson, and E.~Titi.
\newblock Discrete data assimilation in the {L}orenz and 2{D}
  {N}avier--{S}tokes equations.
\newblock {\em Physica D: Nonlinear Phenom.}, 240(18):1416--1425, 2011.

\bibitem{hoke1976initialization}
J.~E. Hoke and R.~A. Anthes.
\newblock The initialization of numerical models by a dynamic-initialization
  technique.
\newblock {\em Mon. Weather Rev.}, 104(12):1551--1556, 1976.

\bibitem{ibdah2020fully}
H.~Ibdah, C.~Mondaini, and E.~Titi.
\newblock Fully discrete numerical schemes of a data assimilation algorithm:
  {U}niform-in-time error estimates.
\newblock {\em IMA J. Numer. Anal.}, 40(4):2584--2625, 2020.

\bibitem{Jolly_Martinez_Olson_Titi_2018_blurred_SQG}
M.~S. Jolly, V.~R. Martinez, E.~J. Olson, and E.~S. Titi.
\newblock Continuous data assimilation with blurred-in-time measurements of the
  surface quasi-geostrophic equation.
\newblock {\em Chin. Ann. Math. Ser. B}, 40(5):721--764, 2019.

\bibitem{Jolly_Martinez_Titi_2017}
M.~S. Jolly, V.~R. Martinez, and E.~S. Titi.
\newblock A data assimilation algorithm for the subcritical surface
  quasi-geostrophic equation.
\newblock {\em Adv. Nonlinear Stud.}, 17(1):167--192, 2017.

\bibitem{kutz2017deep}
J.~N. Kutz.
\newblock Deep learning in fluid dynamics.
\newblock {\em J. Fluid Mech.}, 814:1--4, 2017.

\bibitem{Larios_Pei_2017_KSE_DA_NL}
A.~Larios and Y.~Pei.
\newblock Nonlinear continuous data assimilation.
\newblock {\em arXiv:1703.03546}, 2017.

\bibitem{Larios_Pei_2018_NSV_DA}
A.~Larios and Y.~Pei.
\newblock Approximate continuous data assimilation of the 2{D}
  {N}avier--{S}tokes equations via the {V}oigt-regularization with observable
  data.
\newblock {\em Evol. Equ. Control Theory}, 9(3):733--751, 2020.

\bibitem{Larios_Rebholz_Zerfas_2018}
A.~Larios, L.~G. Rebholz, and C.~Zerfas.
\newblock Global in time stability and accuracy of {IMEX-FEM} data assimilation
  schemes for {N}avier-{S}tokes equations.
\newblock {\em Comput. Methods Appl. Mech. Eng.}, 2018.

\bibitem{Larios_Victor_2019}
A.~Larios and C.~Victor.
\newblock Continuous data assimilation with a moving cluster of data points for
  a reaction diffusion equation: A computational study.
\newblock {\em Commun. Comp. Phys.}, 29:1273--1298, 2021.

\bibitem{Larios_Victor_2021_chiVsdelta2D}
A.~Larios and C.~Victor.
\newblock Improving convergence rates of continuous data assimilation for 2{D}
  {N}avier-{S}tokes using observations that are sparse in space and time.
\newblock {\em (In preparation)}, 2021.

\bibitem{LawShuklaStuart_2014}
K.~Law, A.~Shukla, and A.~Stuart.
\newblock Analysis of the 3{DVAR} filter for the partially observed {L}orenz'63
  model.
\newblock {\em Discrete Contin. Dyn. Syst.}, 34(3):1061--1078, 2014.

\bibitem{lorenz1963deterministic}
E.~Lorenz.
\newblock Deterministic nonperiodic flow.
\newblock {\em J. Atmos. Sci.}, 20(2):130--141, 1963.

\bibitem{Lunasin_Titi_2015}
E.~Lunasin and E.~S. Titi.
\newblock Finite determining parameters feedback control for distributed
  nonlinear dissipative systems--a computational study.
\newblock {\em Evol. Equ. Control Theory}, 6(4):535--557, 2017.

\bibitem{ma2018model}
C.~Ma, J.~Wang, and W.~E.
\newblock Model reduction with memory and the machine learning of dynamical
  systems.
\newblock {\em Commun. Comput. Phys.}, 25(4):947--962, 2018.

\bibitem{Markowich_Titi_Trabelsi_2016_Darcy}
P.~A. Markowich, E.~S. Titi, and S.~Trabelsi.
\newblock Continuous data assimilation for the three-dimensional
  {B}rinkman-{F}orchheimer-extended {D}arcy model.
\newblock {\em Nonlinearity}, 29(4):1292--1328, 2016.

\bibitem{Mondaini_Titi_2018_SIAM_NA}
C.~F. Mondaini and E.~S. Titi.
\newblock Uniform-in-time error estimates for the postprocessing {G}alerkin
  method applied to a data assimilation algorithm.
\newblock {\em SIAM J. Numer. Anal.}, 56(1):78--110, 2018.

\bibitem{ng2021dynamic}
E.~Ng.
\newblock Dynamic parameter estimation from partial observations of the
  {L}orenz equations.
\newblock Master's thesis, Hunter College, 2021.

\bibitem{nguyen2016state}
V.~T. Nguyen, D.~Georges, and G.~Besan{\c{c}}on.
\newblock State and parameter estimation in 1-{D} hyperbolic {PDE}s based on an
  adjoint method.
\newblock {\em Automatica}, 67:185--191, 2016.

\bibitem{olson2008determining}
E.~Olson and E.~Titi.
\newblock Determining modes and {G}rashof number in 2{D} turbulence: {A}
  numerical case study.
\newblock {\em Theor. Comput. Fluid Dyn.}, 22:327--339, 08 2008.

\bibitem{pachev2021concurrent}
B.~Pachev, J.~P. Whitehead, and S.~McQuarrie.
\newblock Concurrent multi-parameter learning demonstrated on the
  kuramoto-sivashinsky equation, 2021.

\bibitem{Pei_2019}
Y.~Pei.
\newblock Continuous data assimilation for the 3{D} primitive equations of the
  ocean.
\newblock {\em Commun. Pure Appl. Anal.}, 18(2):643--661, 2019.

\bibitem{qian2020lift}
E.~Qian, B.~Kramer, B.~Peherstorfer, and K.~Willcox.
\newblock Lift \& learn: {P}hysics-informed machine learning for large-scale
  nonlinear dynamical systems.
\newblock {\em Physica D}, 406:132401, 2020.

\bibitem{radhakrishnan1993lsode}
K.~Radhakrishnan and A.~Hindmarsh.
\newblock Description and use of {LSODE}, the {L}ivermore {S}olver for
  {O}rdinary {D}ifferential {E}quations.
\newblock Technical report, Lawrence Livermore National Laboratory, 1993.

\bibitem{raue2015data2dynamics}
A.~Raue, B.~Steiert, M.~Schelker, C.~Kreutz, T.~Maiwald, H.~Hass, J.~Vanlier,
  C.~T{\"o}nsing, L.~Adlung, R.~Engesser, et~al.
\newblock Data2dynamics: a modeling environment tailored to parameter
  estimation in dynamical systems.
\newblock {\em Bioinformatics}, 31(21):3558--3560, 2015.

\bibitem{Rebholz_Zerfas_2018_alg_nudge}
L.~G. Rebholz and C.~Zerfas.
\newblock Simple and efficient continuous data assimilation of evolution
  equations via algebraic nudging.
\newblock {\em Numer. Methods Partial Differ. Equations}, pages 1--25, 2021.

\bibitem{RobinsonBook}
J.~C. Robinson.
\newblock {\em Infinite-{D}imensional {D}ynamical {S}ystems}.
\newblock Cambridge Texts in Applied Mathematics. Cambridge University Press,
  Cambridge, 2001.
\newblock An introduction to dissipative parabolic PDEs and the theory of
  global attractors.

\bibitem{ruthotto2017jinv}
L.~Ruthotto, E.~Treister, and E.~Haber.
\newblock j{I}nv--a flexible julia package for {PDE} parameter estimation.
\newblock {\em SIAM J. Sci. Comput.}, 39(5):S702--S722, 2017.

\bibitem{souza2015maximal}
A.~Souza and C.~Doering.
\newblock Maximal transport in the {L}orenz equations.
\newblock {\em Phys. Lett. A}, 379(6):518--523, 2015.

\bibitem{trehan2017error}
S.~Trehan, K.~Carlberg, and L.~Durlofsky.
\newblock Error modeling for surrogates of dynamical systems using machine
  learning.
\newblock {\em Internat. J. Numer. Methods Eng.}, 112(12):1801--1827, 2017.

\bibitem{van2001square}
R.~Van Der~Merwe and E.~A. Wan.
\newblock The square-root unscented {K}alman filter for state and
  parameter-estimation.
\newblock In {\em 2001 IEEE international conference on acoustics, speech, and
  signal processing. Proceedings (Cat. No. 01CH37221)}, volume~6, pages
  3461--3464. IEEE, 2001.

\bibitem{WingardThesis}
C.~Wingard.
\newblock Removing bias and periodic noise in measurements of the {L}orenz
  system.
\newblock {\em Thesis, University of Nevada, Department of Mathematics and
  Statistics}, 2009.

\bibitem{xu2015application}
L.~Xu.
\newblock Application of the {N}ewton iteration algorithm to the parameter
  estimation for dynamical systems.
\newblock {\em J. Comput. Appl. Math.}, 288:33--43, 2015.

\bibitem{xun2013parameter}
X.~Xun, J.~Cao, B.~Mallick, A.~Maity, and R.~Carroll.
\newblock Parameter estimation of partial differential equation models.
\newblock {\em Journal of the American Statistical Association},
  108(503):1009--1020, 2013.

\bibitem{Zerfas_Rebholz_Schneier_Iliescu_2019}
C.~Zerfas, L.~Rebholz, M.~Schneier, and T.~Iliescu.
\newblock Continuous data assimilation reduced order models of fluid flow.
\newblock {\em Comput. Methods Appl. Mech. Engrg.}, 357:112596, 18, 2019.

\bibitem{zhu2019state}
J.~Zhu, Z.~Wang, L.~Zhang, and W.~Zhang.
\newblock State and parameter estimation based on a modified particle filter
  for an in-wheel-motor-drive electric vehicle.
\newblock {\em Mech. Mach. Theory}, 133:606--624, 2019.

\end{thebibliography}
\end{document}